\newtheorem{theorem}{Theorem}
\newtheorem{lemma}[theorem]{Lemma}
\newtheorem*{lemma*}{Lemma}
\newtheorem{proposition}[theorem]{Proposition}
\newtheorem{definition}[theorem]{Definition}
\newtheorem{remark}[theorem]{Remark}
\newtheorem*{fact*}{Fact}
\newcommand{\T}{\mathbb{T}}
\newcommand{\Z}{\mathbb{Z}}
\newcommand{\R}{\mathbb{R}}
\newcommand{\C}{\mathbb{C}}
\newcommand{\E}{\mathbb{E}}
\begin{document}

\title[A discrete log gas, discrete FH-singularities, and GMC]{A discrete log gas, discrete Toeplitz determinants with Fisher-Hartwig singularities, and Gaussian Multiplicative Chaos}

\author[C. Webb]{Christian Webb}
\address{Department of mathematics and systems analysis, Aalto University, PO Box 11000, 00076 Aalto, Finland}
\email{christian.webb@aalto.fi}
\date{\today}

\begin{abstract}
We consider a log-gas on a discretization of the unit circle. We prove that if the gas is not too dense, or the number of particles in the gas is not too large compared to the scale of the discretization, the absolute value of the characteristic polynomial can be described in terms of a Gaussian multiplicative chaos measure. This is done by analyzing discrete Toeplitz determinants with Fisher-Hartwig singularities. In particular, we prove that if the gas is not too dense, the classical Fisher-Hartwig conjecture holds for the discrete Toeplitz determinant as well. Our analysis suggests that if the gas is any denser than this, the formula needs to be modified. 
\end{abstract}

\maketitle

\section{Introduction}

It is a basic fact in random matrix theory that for a large class of random matrix models, the distribution of the eigenvalues can be described in terms of a log gas - namely a Gibbs measure of a system of particles experiencing an external confining potential and interacting with each other through a logarithmic repulsion. Notable examples of such models are the Gaussian ensembles, circular ensembles, and the Ginibre Ensemble - for more details, see e.g \cite{mehta,forrester}.

\vspace{0.3cm}

When considering the global (or macroscopic) scale, asymptotic properties of the distribution of eigenvalues are often described in terms of linear statistics: if $(\lambda_1,...,\lambda_N)$ are the eigenvalues, one considers $\sum_j g(\lambda_j)$ for some nice enough function $g$. The leading order behavior of such linear statistics are known to be described by a law of large numbers - the asymptotic distribution is described in terms of a deterministic equilibrium measure (see e.g. \cite{johansson1}). The fluctuations around this equilibrium measure are known in many cases to be Gaussian (see e.g. \cite{ds,johansson1,ridvir}). 

\vspace{0.3cm}

In fact, these fluctuations in the linear statistics imply that the fluctuations in the distribution of the eigenvalues, e.g. on the level of the logarithm of the characteristic polynomial, are asymptotically described by the Gaussian Free Field (perhaps restricted to an interval, the unit circle, or the unit disk and with a suitable convention for the zero mode) - see e.g. \cite{hko,fks,ridvir}. Such fluctuations have been seen even on the level of discrete log-gases - for these, the underlying space where the log-gas lives on is a discrete set. Such models arise in for example combinatorial questions - see e.g. \cite{johansson2,bgg}.

\vspace{0.3cm}

The Gaussian Free Field is a rough object (for an introduction to it, see e.g. \cite{sheffield}). In particular, its "geometric" properties are far from trivial. It has recently been discovered that the geometry of the Gaussian Free Field, and more generally log-correlated Gaussian fields, is best described by "exponentiating" them into random measures through the theory of Gaussian multiplicative chaos going back to Kahane \cite{kahane} (for a review see \cite{rv} and for a concise proof of existence and uniqueness, see \cite{berestycki}). This exponentiating has proven to be intimately related to the multifractal properties of the field (see e.g. \cite{rv}) and the extrema of it (see \cite{drz,mad}). These random measures also play a crucial role in the mathematical study of the random geometry of two-dimensional quantum gravity (\cite{dupshef}) and construction of conformally invariant random planar curves (\cite{shefweld,ajks}).

\vspace{0.3cm}

The way these measures are constructed is by regularizing the given log-correlated field into a function, exponentiating and normalizing this, and then passing to a limit where the regularization is removed. A natural question is then do all (reasonable) regularizations produce a multiplicative chaos measure in this way (and is the law of the limit the same for every regularization). Mostly the regularizations which are known to produce a chaos measure are trivial in the sense that the regularization imposes a  martingale structure or something else which makes proving convergence quite simple. To the author's knowledge, the only case, where a Gaussian multiplicative chaos measure has been proven to emerge from a "non-trivial" regularization of a log-correlated field, is the characteristic polynomial of the circular unitary ensemble (see \cite{webb} based on conjectures in \cite{fk} and results in \cite{dik1,dik2,ck}). One of the goals of this note is to offer an example from another class of models - the characteristic polynomial of a discrete log-gas which is in fact a discretization of the law of the eigenvalues of the CUE.

\vspace{0.3cm}

The main tool for analyzing our model is the realization that the relevant quantities can be described as discrete Toeplitz determinants with Fisher-Hartwig singularities. In our setting, a discrete Toeplitz determinant is one whose entries are given in terms of a discrete Fourier transform of a symbol instead of a continuum one. In \cite{bl}, the asymptotics of such discrete Toeplitz determinants are studied in the case where the symbol has no singularities. Comparing to the continuum setting, it is a natural question then to consider what happens when the symbol can have Fisher-Hartwig singularities. A further goal of this note is to describe (in terms of  the relationship between the number of particles in the gas compared to the number of lattice points in the discretization) when does a discrete Toeplitz determinant with Fisher-Hartwig singularities have the same asymptotics as a continuum one. Our result concerning this is that if there are $N$ particles in the gas and $M$ points in the discretization of the unit circle, and if $N/M\to 0$ as $N\to\infty$, then the discrete and continuum Toeplitz determinants with FH-singularities have the same asymptotics. Perhaps more interestingly, our analysis suggests that if $N/M\to q>0$ as $N\to\infty$, the asymptotics of the discrete Toeplitz determinant will deviate from the continuum one - see the last section of this note for a discussion about this. Another interesting question (which we don't discuss any further) is how our results relate to discrete Riemann-Hilbert problems (see e.g. \cite{bkmlm} for how these are used for studying asymptotics of discrete orthogonal polynomials).

\vspace{0.3cm}

The outline of this note is the following: we start by describing our model and quickly review a construction for the relevant chaos measure. We then introduce our main result and the smaller ingredients this will consist of. Next we go on to review continuum and discrete Toeplitz determinants. We then prove our main tool for analyzing discrete Toeplitz determinants - a generalization of a result in \cite{bl}, namely a description in terms a continuum Toeplitz determinant with the same symbol and a Fredholm determinant. Due to this representation, the relevant question then becomes analyzing the asymptotics of the Fredholm determinant which can be done by making use of the asymptotics of the (continuum) orthogonal polynomials with respect to a Fisher-Hartwig weight. The bulk of the note consists of straightforward, but tedious analysis of the asymptotics of the polynomials and the Fredholm determinant - this part relies heavily on results in \cite{mfmls1,mfmls2,ck}. Finally when we've proven the relevant asymptotics, we are able to prove our main result (from the point of view of Gaussian multiplicative chaos it is Theorem \ref{th:main}, but from the point of view of discrete Toeplitz determinants with FH-singularities it is Proposition \ref{prop:fh2}). After the proofs, we discuss briefly the possibility of deviating from the continuum FH-asymptotics.

\vspace{0.3cm}

\bf Acknowledgements: \rm The author wishes to thank Y.V. Fyodorov for discussions which have been extremely influential on the author's view of log-gases and seminal for the work in this note. The author also wishes to thank A. Kupiainen and E. Saksman for discussions related to trace class operators, Hilbert-Schmidt operators, and Fredholm determinants.

\section{Model, main result, and outline of approach}

We will now review our model, introduce the relevant objects and notation to state our main result, and finally discuss what are the relevant estimates we shall need. 

\subsection{The model - a discrete log-gas and its characteristic polynomial}

We begin by considering a discretization of the unit circle ($M$ evenly spaced points on the unit circle) and then consider a probability measure on the $N$-fold product of this discrete unit circle with itself. The resulting probability measure can be seen to be a discretization of the law of the eigenvalues of a Haar distributed $N\times N$ random unitary matrix.

\begin{definition}
Let us denote by $\T$, the unit circle in the complex plane. Fix $M\in\Z_+$, let $\gamma_M:\C\to \C$,

\begin{equation}\label{eq:gamma}
\gamma_M(z)=z^M-1,
\end{equation}

\noindent and let 

\begin{equation}
\mathcal{D}_M=\lbrace z\in \C: \gamma_M(z)=0\rbrace\subset \T.
\end{equation}

We then fix some $N\in \Z_+$ such that $N\leq M$ and consider the following probability measure on $\mathcal{D}_M^N$:

\begin{equation}\label{eq:measure}
\mathbb{P}_{N,M}(z_1,...,z_N)=\frac{1}{Z_{N,M}}\prod_{1\leq i<j\leq N}|z_i-z_j|^2,
\end{equation}

\noindent where 

\begin{equation}
Z_{N,M}=\sum_{z_1,...,z_N\in \mathcal{D}_M}\prod_{1\leq i<j\leq N}|z_i-z_j|^2.
\end{equation}

We'll denote by $\E_{N,M}$ or just $\E$ the expectation with respect to this measure (we'll also drop the subscripts $N,M$ from $\mathbb{P}_{N,M}$ when convenient).

\end{definition}

\begin{remark}
We'll prove shortly that 

\begin{equation}\label{eq:Z}
Z_{N,M}=N!M^N.
\end{equation}
\end{remark}

\begin{remark}\label{rem:nbound}
The restriction that $N\leq M$ is essential as $Z_{N,M}=0$ if $N>M$ (if $N>M$ we can't find $N$ distinct points from $\mathcal{D}_M$ which contains $M$ points). On the other hand, the other extreme would be when we let $M\to\infty$ as $N$ remains fixed. In this case we would simply get a log-gas on the unit circle and the law of the points $(z_1,...,z_N)$ would be that of the eigenvalues of the Circular Unitary Ensemble (or Haar distributed random unitary matrices). 
\end{remark}

The way we wish to describe the limiting properties of these probability measures as $N,M\to \infty$ is through the "characteristic polynomial" of the random points $(z_1,...,z_N)$. For simplicity, we focus on its absolute value (as the zeros give the locations of the points $z_i$, this describes the global geometry just as well as the full characteristic polynomial). Also with the same effort, we can study (small enough) positive real powers of this quantity, so we add a further parameter to the quantity. Moreover, as we wish to prove convergence to a multiplicative chaos measure, we want to treat it as a measure so we make the following definitions:

\begin{definition}
Let $\beta>0$ and $F_{N,M}^{\beta}:\C\to[0,\infty)$,

\begin{equation}
F_{N,M}^{\beta}(w)=\prod_{j=1}^N|w-z_j|^\beta.
\end{equation}

Moreover, consider the following Radon measure on the unit circle:

\begin{equation}
\mu_{N,M}^{\beta}(d\theta)=\frac{F_{N,M}^{\beta}(e^{i\theta})}{\E F_{N,M}^{\beta}(e^{i\theta})}\frac{d\theta}{2\pi}.
\end{equation}

\end{definition}

We now describe what $\mu_{N,M}^{\beta}$ will converge to as $N,M\to\infty$ in a suitable way as well as the mode of convergence. 

\subsection{Gaussian multiplicative chaos and convergence in distribution with respect to the weak topology of Radon measures.}

The relevant limiting object is a Gaussian Multiplicative Chaos measure. We refer to \cite{rv} for a general review on the theory of such measures and to \cite{berestycki} for a concise proof for the existence (and uniqueness) of them.

\begin{definition}
Let $(Z_j)_{j=1}^\infty$ be i.i.d. standard complex Gaussians, $L\in \Z_+$ and $X_L:[0,2\pi)\to \R$,

\begin{equation}
X_L(\theta)=\mathrm{Re}\sum_{j=1}^L\frac{1}{\sqrt{j}}Z_j e^{ij\theta}.
\end{equation} 

For $\beta>0$, consider the Radon measures

\begin{equation}
\mu_L^\beta(d\theta)=\frac{e^{\beta X_L(\theta)}}{\E(e^{\beta X_L(\theta)})}\frac{d\theta}{2\pi}
\end{equation}

\noindent and 

\begin{equation}
\mu^\beta(d\theta)=\lim_{L\to\infty}\mu^\beta_L(d\theta),
\end{equation}

\noindent where the limit is in the almost sure sense and with respect to the topology of weak convergence on the space of Radon measures on the unit circle, i.e. for every continuous $f:\T\to\R$

\begin{equation}
\int_0^{2\pi}f(e^{i\theta})\mu_L^\beta(d\theta)\stackrel{L\to\infty}{\to}\int_0^{2\pi}f(e^{i\theta})\mu^\beta(d\theta)
\end{equation}

\noindent almost surely.
\end{definition}

\begin{remark}
One can check that 

\begin{equation}
\E(X_L(\theta)X_L(\theta'))\stackrel{L\to\infty}{\to}-\frac{1}{2}\log |e^{i\theta}-e^{i\theta'}|,
\end{equation}

\noindent so one can view $X_L$ as a (constant multiple of a) regularization of the two-dimensional Gaussian Free Field restricted to the unit circle. If we write $X$ for this field, then formally $\mu_\beta(d\theta)=e^{\beta X(\theta)-\frac{\beta^2}{2}\E(X(\theta)^2)}d\theta/2\pi$. This exponential is purely formal as $X$ is a generalized function and $\E(X(\theta)^2)=\infty$. The proper definition of $\mu^\beta$ is precisely in terms of this limiting procedure. In \cite{berestycki,js} it is proved that constructing $\mu^\beta$ through almost any other reasonable regularization of the field $X$ yields a measure with the same law as $\mu^\beta$.
\end{remark}

\begin{remark}
A basic fact in the theory of Gaussian Multiplicative Chaos is that $\mu_L^\beta$ indeed converges almost surely to a non-trivial random measure (which we call $\mu^\beta$) for $0<\beta<2$. For $\beta\geq 2$, $\mu^\beta=0$.
\end{remark}

When proving convergence of $\mu_{N,M}^\beta$, the topology we'll consider convergence in is still that of weak convergence of measures, but the convergence is no longer almost sure, but that of convergence in distribution. We refer to section 4 of \cite{kallenberg} for a general version of the following characterization of convergence in distribution to  with respect to the weak topology of measures on the unit circle.

\begin{proposition}\label{prop:dconv}
A sequence of random measures $\nu_n$ on the unit circle converges in distribution with respect to the topology of weak convergence of measures, to the measure $\nu$ if and only if, as $n\to\infty$,

\begin{equation}
\int_0^{2\pi}f(e^{i\theta})\nu_n(d\theta)\stackrel{d}{\to}\int_0^{2\pi}f(e^{i\theta})\nu(d\theta),
\end{equation}
 
\noindent for every continuous $f:\T\to [0,\infty)$.
\end{proposition}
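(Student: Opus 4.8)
The plan is to reduce the statement to three standard facts about random elements of a Polish space: Prokhorov's theorem, the continuous mapping theorem, and the fact that the law of a random finite positive measure on a compact space is determined by its Laplace functional $f\mapsto\E\,e^{-\int f\,d\mu}$, with $f$ ranging over nonnegative continuous functions. Since $\T$ is compact, a Radon measure on $\T$ is just a finite Borel measure, and the space $M_+(\T)$ of such measures endowed with the weak topology is Polish. The ``only if'' direction is then immediate: for fixed continuous $f:\T\to[0,\infty)$ the evaluation map $\mu\mapsto\int_0^{2\pi}f(e^{i\theta})\,\mu(d\theta)$ is continuous on $M_+(\T)$, so if $\nu_n\to\nu$ in distribution with respect to the weak topology, the continuous mapping theorem gives $\int f\,d\nu_n\stackrel{d}{\to}\int f\,d\nu$.

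For the ``if'' direction, I would first establish \emph{tightness} of $(\nu_n)$ in $M_+(\T)$. Taking $f\equiv 1$ in the hypothesis shows that the total masses $\nu_n(\T)$ converge in distribution, hence form a tight family of $[0,\infty)$-valued random variables. Because $\T$ is compact, $C(\T)$ is separable, so by Banach--Alaoglu the set $\{\mu\in M_+(\T):\mu(\T)\le R\}$ is compact and metrizable in the weak topology for every $R>0$; therefore tightness of the total masses upgrades to tightness of $(\nu_n)$ as random elements of $M_+(\T)$.

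Next, by Prokhorov's theorem every subsequence of $(\nu_n)$ has a further subsequence converging in distribution, say $\nu_{n_k}\stackrel{d}{\to}\widetilde\nu$ for some random measure $\widetilde\nu$. Applying the continuous mapping theorem along this subsequence and comparing with the hypothesis yields $\int f\,d\widetilde\nu\eqlaw\int f\,d\nu$ for every continuous $f:\T\to[0,\infty)$; since $x\mapsto e^{-x}$ is bounded and continuous on $[0,\infty)$, this gives, for all such $f$,
\begin{equation}
\E\,\exp\Big(-\int_0^{2\pi}f(e^{i\theta})\,\widetilde\nu(d\theta)\Big)=\E\,\exp\Big(-\int_0^{2\pi}f(e^{i\theta})\,\nu(d\theta)\Big).
\end{equation}
By uniqueness of Laplace functionals (see section~4 of \cite{kallenberg}) this forces $\widetilde\nu\eqlaw\nu$. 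Thus every subsequential limit of $(\nu_n)$ has the law of $\nu$, and together with tightness the subsequence principle gives $\nu_n\stackrel{d}{\to}\nu$ in $M_+(\T)$, which is the assertion.

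The only genuinely non-formal input — and the step I expect to be the main obstacle — is the uniqueness of the Laplace functional: the hypothesis controls only the \emph{one-dimensional} statistics $\int f\,d\nu_n$ and gives no a priori information about the joint laws of $\big(\int f_1\,d\nu_n,\dots,\int f_m\,d\nu_n\big)$, so one cannot simply match finite-dimensional distributions. Instead one must use that the family $f\mapsto\E\,e^{-\int f\,d\mu}$, indexed by nonnegative continuous $f$, already separates the laws of random measures on $\T$; this is proved by a monotone-class argument reducing to uniqueness of ordinary Laplace transforms, and we quote it from \cite{kallenberg}.
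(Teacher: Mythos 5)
Your proof is correct. The paper itself gives no argument for this proposition — it simply quotes it from section~4 of \cite{kallenberg} — and your route (only-if via the continuous mapping theorem; if via tightness from the total masses $\nu_n(\T)$, Prokhorov, and uniqueness of the Laplace functional $f\mapsto\E e^{-\int f\,d\mu}$ over nonnegative continuous $f$, which encodes joint laws since that class is closed under nonnegative linear combinations) is essentially the standard proof found in that reference.
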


\subsection{Main theorem}

We are now in a position to state our main theorem.

\begin{theorem}\label{th:main}
Let $N/M\to 0$ as $N\to\infty$, and $\beta\in(0,\sqrt{2})$. Then as $N\to\infty$,

\begin{equation}
\mu_{N,M}^\beta(d\theta)\to \mu^\beta(d\theta)
\end{equation}

\noindent in the sense of convergence in distribution with respect to the topology of weak convergence of measures.

\end{theorem}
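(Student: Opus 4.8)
The plan is to adapt the truncation argument used for the CUE characteristic polynomial in \cite{webb}. By Proposition \ref{prop:dconv} it is enough to fix a continuous $f:\T\to[0,\infty)$ and prove that $S_{N,M}:=\int_0^{2\pi}f(e^{i\theta})\mu_{N,M}^\beta(d\theta)$ converges in distribution, as $N\to\infty$, to $S:=\int_0^{2\pi}f(e^{i\theta})\mu^\beta(d\theta)$. Writing $T_n=\sum_{j=1}^Nz_j^n$ for the power sums of the gas and using $\sum_{n\ge1}n^{-1}\cos nx=-\log|2\sin(x/2)|$, one has $F_{N,M}^\beta(e^{i\theta})=\exp(-\beta\sum_{n\ge1}\tfrac1n\Re(T_ne^{-in\theta}))$; for $L\in\Z_+$ introduce the frequency-truncated versions
\[
F_{N,M,L}^\beta(e^{i\theta})=\exp\!\Big(-\beta\sum_{n=1}^L\tfrac1n\Re(T_ne^{-in\theta})\Big),\qquad S_{N,M,L}=\int_0^{2\pi}f(e^{i\theta})\,\frac{F_{N,M,L}^\beta(e^{i\theta})}{\E F_{N,M,L}^\beta(e^{i\theta})}\,\frac{d\theta}{2\pi},
\]
and on the Gaussian side set $S_L=\int_0^{2\pi}f(e^{i\theta})\mu_L^\beta(d\theta)$. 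I would then deduce the theorem from: (A) for each fixed $L$, $S_{N,M,L}\stackrel{d}{\to}S_L$ as $N\to\infty$; (B) $\lim_{L\to\infty}\limsup_{N\to\infty}\E[(S_{N,M}-S_{N,M,L})^2]=0$; (C) $S_L\to S$ in $L^2$ as $L\to\infty$. These combine, via a three-$\epsilon$ argument applied to an arbitrary bounded Lipschitz test function (bounding the discrepancy between $S_{N,M}$ and $S_{N,M,L}$ by Cauchy--Schwarz and (B)), to give $S_{N,M}\stackrel{d}{\to}S$.

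Ingredient (C) is standard $L^2$ Gaussian multiplicative chaos theory: with the normalisation built into $\mu_L^\beta$, the sequence $(S_L)_L$ is a nonnegative martingale, and $\beta<\sqrt2$ (equivalently $\beta^2/2<1$, the local integrability threshold of the limiting kernel $|e^{i\theta}-e^{i\phi}|^{-\beta^2/2}$) makes $\sup_L\E[S_L^2]<\infty$, so $S_L$ converges almost surely and in $L^2$, necessarily to $S$. For ingredient (A), the key point is that the hypothesis $N/M\to0$ forces the low-order power sums to become Gaussian: for each fixed $L$, $(T_1,\dots,T_L)\stackrel{d}{\to}(W_1,\dots,W_L)$ with the $W_n$ independent complex Gaussians satisfying $\E|W_n|^2=n$ --- the same limit as $\mathrm{Tr}\,U^n$ under the CUE. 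I would prove this by computing $\E_{N,M}[\prod_n T_n^{a_n}\overline{T_n}^{b_n}]$: the symmetric-function identities behind the Diaconis--Shahshahani computation carry over, the discretisation through $\mathcal{D}_M$ contributing only corrections that vanish when $N/M\to0$ (alternatively, one couples $\Prob_{N,M}$ with the CUE). Since $-\beta\sum_{n\le L}\tfrac1n\Re(W_ne^{-in\theta})$ has, as a process in $\theta$, the same law as $\beta X_L(\theta)$, and since $\E_{N,M}F_{N,M,L}^\beta(e^{i\theta})\to\prod_{n=1}^Le^{\beta^2/(4n)}$ uniformly in $\theta$ (which needs only a mild uniform integrability input --- for fixed $n$, moments of $T_n$ under $\Prob_{N,M}$ are bounded uniformly in $N$, again by the CUE comparison or by the single Fisher--Hartwig singularity case of Proposition \ref{prop:fh2}), the continuous mapping theorem applied to $(w_1,\dots,w_L)\mapsto\int_0^{2\pi}f(e^{i\theta})\exp(-\beta\sum_{n\le L}\tfrac1n\Re(w_ne^{-in\theta}))\,d\theta$ yields $S_{N,M,L}\stackrel{d}{\to}S_L$.

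Ingredient (B) is the crux and is precisely what the bulk of this note is built to supply. Expanding the square, $\E[(S_{N,M}-S_{N,M,L})^2]$ equals $\iint f(e^{i\theta})f(e^{i\phi})$ integrated against the combination (with signs $+1,-2,+1$) of the three two-point ratios
\[
\frac{\E[F_{N,M}^\beta(e^{i\theta})F_{N,M}^\beta(e^{i\phi})]}{\E F_{N,M}^\beta(e^{i\theta})\,\E F_{N,M}^\beta(e^{i\phi})},\qquad \frac{\E[F_{N,M}^\beta(e^{i\theta})F_{N,M,L}^\beta(e^{i\phi})]}{\E F_{N,M}^\beta(e^{i\theta})\,\E F_{N,M,L}^\beta(e^{i\phi})},\qquad \frac{\E[F_{N,M,L}^\beta(e^{i\theta})F_{N,M,L}^\beta(e^{i\phi})]}{\E F_{N,M,L}^\beta(e^{i\theta})\,\E F_{N,M,L}^\beta(e^{i\phi})}.
\]
Each numerator is (a truncated variant of) a discrete Toeplitz determinant with two Fisher--Hartwig singularities of strength $\beta/2$ at $e^{i\theta}$ and $e^{i\phi}$; the discrete Fisher--Hartwig asymptotics established here (the $k=2$ case of Proposition \ref{prop:fh2}, which holds throughout $\beta<\sqrt2$ precisely because then $\beta^2/2<1$) give that each ratio converges, uniformly on compact subsets of $\{\theta\ne\phi\}$, to the corresponding Gaussian two-point function, so that the integrand converges pointwise to $f(e^{i\theta})f(e^{i\phi})|e^{i\theta}-e^{i\phi}|^{-\beta^2/2}\big(1-e^{-\frac{\beta^2}{2}\sum_{n>L}\frac{\cos n(\theta-\phi)}{n}}\big)$. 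Using the uniform-in-$N,M$ bound $\E[F_{N,M}^\beta(e^{i\theta})F_{N,M}^\beta(e^{i\phi})]\le C\,\E F_{N,M}^\beta(e^{i\theta})\,\E F_{N,M}^\beta(e^{i\phi})\,|e^{i\theta}-e^{i\phi}|^{-\beta^2/2}$ (again an output of the two-singularity analysis) one may pass $N\to\infty$ under the integral, and then a direct estimate exploiting $\beta^2/2<1$ --- splitting the $(\theta,\phi)$-integral according to whether $|e^{i\theta}-e^{i\phi}|$ is $\lesssim1/L$ or not --- sends the resulting quantity to $0$ as $L\to\infty$, which gives (B). The genuine difficulty of the whole proof is thus concentrated in this step, namely obtaining the two-point discrete Toeplitz asymptotics with error terms uniform enough as the two singularities merge; this is exactly what the asymptotic analysis of the orthogonal polynomials for a Fisher--Hartwig weight and of the associated Fredholm determinant (relying on \cite{bl,mfmls1,mfmls2,ck}) is designed to provide, and everything else in the argument is comparatively soft.
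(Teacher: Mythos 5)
Your proposal is correct and follows essentially the same route as the paper: truncate the log at Fourier mode $L$, prove convergence of the truncated measure for fixed $L$ via the Gaussian limit of the power sums $\sum_k z_k^j$ (the paper's Propositions \ref{prop:linstat}--\ref{prop:approxconv}), control $\E\bigl[(S_{N,M}-S_{N,M,L})^2\bigr]$ through the two-point discrete Fisher--Hartwig asymptotics uniform under merging singularities (Propositions \ref{prop:fh}--\ref{prop:var}), and combine by a Slutsky/three-$\epsilon$ argument in the $L^2$-regime $\beta<\sqrt2$. The only deviations are cosmetic — a direct $|e^{i\theta}-e^{i\phi}|\lesssim 1/L$ splitting in place of the paper's Fatou argument for the $L\to\infty$ limit, and a moment/coupling phrasing of the power-sum CLT — though note that the uniform convergence of $\E F_{N,M,L}^\beta$ needs exponential-moment (i.e.\ discrete Szeg\H{o}/Toeplitz) control as in part 1) of Proposition \ref{prop:fh}, not merely bounded polynomial moments of $T_n$.
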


\begin{remark}
Note that we do not prove convergence for all values of $\beta$. This is due to the fact that our approach relies crucially on calculating variances which are finite only for $\beta\in(0,\sqrt{2})$. It is an interesting open problem to extend such results to $\beta\in[\sqrt{2},2)$ (or even further).
\end{remark}

\begin{remark}
As noted in Remark \ref{rem:nbound}, one must have $N\leq M$ to have a meaningful model, but this result still leaves open for example the cases where $N/M\to q\in(0,1]$ as $N\to\infty$. In fact, there are some hints in our approach that perhaps something else happens and the discreteness of the model (deviation from the $M\to\infty$ for fixed $N$ - situation) becomes visible in the characteristic polynomial. For further discussion, see the last section of this note.
\end{remark}

\subsection{Structure of the proof} We will now provide an outline for our proof of Theorem \ref{th:main}. Our argument is a rather standard probabilistic argument - we add a further level of approximation to $\mu_{N,M}^\beta$, by truncating the Fourier series of $\log F_{N,M}^\beta(e^{i\theta})$, and we show that as $N\to\infty$, this converges in law to something that converges to $\mu^\beta$ as we remove the truncation. In addition to this, we prove that the variance of the error of this approximation tends to zero as we first let $N\to\infty$ and then remove the truncation. More precisely, let us make the following definitions:

\begin{definition}
For $L\in \Z_+$, let 

\begin{equation}
F_{N,M,L}^\beta(e^{i\theta})=e^{-\beta\mathrm{Re}\sum_{j=1}^L\frac{1}{j}e^{-ij\theta}(\sum_{k=1}^N z_k^j)}
\end{equation}

\noindent and 

\begin{equation}
\mu_{N,M,L}^\beta(d\theta)=\frac{F_{N,M,L}^\beta(e^{i\theta})}{\E(F_{N,M,L}^\beta(e^{i\theta}))}\frac{d\theta}{2\pi}.
\end{equation}

\end{definition}

The first (and easier) ingredient for the proof will be to show that $\mu_{N,M,L}^\beta$ converges to $\mu_L^\beta$ as $N\to\infty$. More precisely

\begin{proposition}\label{prop:approxconv}
For any $\beta>0$, $L\in \Z_+$, and $N,M$ such that $M-N\to\infty$ as $N\to\infty$,

\begin{equation}
\mu_{N,M,L}^\beta(d\theta)\stackrel{d}{\to}\mu_L^\beta(d\theta)
\end{equation}

\noindent as $N\to\infty$ (with respect to the topology of weak convergence of measures on the unit circle).
\end{proposition}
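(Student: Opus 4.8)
The plan is to reduce the statement to the joint convergence of the power sums $p_j:=\sum_{k=1}^N z_k^{\,j}$, $j=1,\dots,L$, to a vector of independent complex Gaussians, and then to carry this through the explicit continuous maps that build $\mu_{N,M,L}^\beta$ out of the $p_j$'s and $\mu_L^\beta$ out of the $Z_j$'s.

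For the first step I would use that $\mathbb{P}_{N,M}$ is the determinantal (orthogonal polynomial) ensemble on $\mathcal{D}_M$ associated with the uniform reference measure $\nu_M:=M^{-1}\sum_{\zeta\in\mathcal{D}_M}\delta_\zeta$: since $1,z,\dots,z^{M-1}$ are orthonormal in $L^2(\nu_M)$, the correlation kernel is the projection kernel $K_{N,M}(z,w)=\sum_{c=0}^{N-1}(z\bar w)^c$ — literally the CUE Christoffel--Darboux kernel, only the underlying measure being $\nu_M$ rather than $d\theta/2\pi$. Then I would run the method of moments. By the determinantal structure, each mixed moment $\E_{N,M}\big[\prod_j p_j^{a_j}\bar p_j^{b_j}\big]$ is a finite sum of integrals $\int\!\cdots\!\int\big(\prod_a w_a^{n_a}\big)\prod_a d\nu_M(w_a)$ in which every exponent satisfies $|n_a|\le N-1+L$; since $\int w^n\,d\nu_M=\ind[M\mid n]$, which equals $\ind[n=0]=\int w^n\,\tfrac{d\theta}{2\pi}$ as soon as $|n|<M$, the hypothesis $M-N\to\infty$ (which forces $M\ge N+L$ for all large $N$) makes every such integral coincide exactly with its CUE counterpart. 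Hence for $N$ large these moments are exactly the $N\times N$ CUE moments of $(\sum_k z_k^{\,j})_{j\le L}$, which by the classical computation of such moments equal, once $N$ exceeds a threshold depending only on the moment in question, the moments of $(\sqrt1\,Z_1,\dots,\sqrt L\,Z_L)$ with $Z_j$ i.i.d. standard complex Gaussian. Since Gaussian vectors are moment-determinate, this gives $(p_1,\dots,p_L)\stackrel{d}{\to}(\sqrt1\,Z_1,\dots,\sqrt L\,Z_L)$. This is exactly where $M-N\to\infty$ (rather than merely $M\to\infty$) is used: a direct computation gives $\E_{N,M}|p_j|^2=\min(j,M-N)$ for $1\le j<M$, so the ``CUE value'' $j$ is recovered only after $M-N$ has overtaken every $j\le L$.

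Next I would transfer this to the measures. The exponent $G_{N,M}(\theta):=-\beta\,\mathrm{Re}\sum_{j=1}^L\tfrac1j e^{-ij\theta}p_j$ is a fixed continuous function of $(p_1,\dots,p_L)$, and the map $\C^L\to C(\T)$ sending a vector to this trigonometric polynomial is continuous, so the continuous mapping theorem gives $G_{N,M}\stackrel{d}{\to}-\beta\,\mathrm{Re}\sum_j\tfrac1{\sqrt j}e^{-ij\theta}Z_j$ in $C(\T)$; by the symmetries $Z\eqlaw\bar Z\eqlaw-Z$ this limiting process has the same law as $\beta X_L$. It remains to control the deterministic normalisation $c_N(\theta):=\E\big(F_{N,M,L}^\beta(e^{i\theta})\big)=\E_{N,M}\big[\prod_{k=1}^N e^{\phi_\theta(z_k)}\big]$, where $\phi_\theta(z)=-\beta\,\mathrm{Re}\sum_{j=1}^L\tfrac1j e^{-ij\theta}z^j$. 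By Heine's identity this equals the discrete Toeplitz determinant $\det\big[\widehat{h}_M(j-i)\big]_{i,j=1}^N$ with the smooth, non-vanishing, zero-mean symbol $h:=e^{\phi_\theta}$, where $\widehat{h}_M(n)=M^{-1}\sum_{\zeta\in\mathcal{D}_M}h(\zeta)\zeta^{-n}$; by \cite{bl} (applied to this fixed smooth symbol — equivalently, by the discrete Toeplitz analysis carried out later in this note) one gets, uniformly in $\theta$, $c_N(\theta)\to\exp\big(\sum_{k\ge1}k\,\widehat{\phi_\theta}(k)\widehat{\phi_\theta}(-k)\big)=\exp\big(\tfrac{\beta^2}{4}\sum_{k=1}^L\tfrac1k\big)=\E\big(e^{\beta X_L(\theta)}\big)$, a constant independent of $\theta$.

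Assembling the pieces, $e^{G_{N,M}(\cdot)}/c_N(\cdot)\stackrel{d}{\to}e^{\beta X_L(\cdot)}/\E(e^{\beta X_L(\cdot)})$ in $C(\T)$ (a $C(\T)$-valued sequence converging in distribution, divided by a deterministic sequence converging uniformly to a positive constant; Slutsky together with continuity of multiplication on $C(\T)$), and since $g\mapsto g(e^{i\cdot})\,\tfrac{d\theta}{2\pi}$ is continuous from $C(\T)$ into the Radon measures with the weak topology, it follows that $\mu_{N,M,L}^\beta\stackrel{d}{\to}\mu_L^\beta$ — equivalently, testing against continuous $f\ge 0$ as in Proposition \ref{prop:dconv}, $\int_0^{2\pi}f(e^{i\theta})\,\mu_{N,M,L}^\beta(d\theta)\stackrel{d}{\to}\int_0^{2\pi}f(e^{i\theta})\,\mu_L^\beta(d\theta)$. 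The main obstacle is the first step: one must organise the determinantal moment computation so that the single degree bound $|n_a|\le N-1+L<M$ is transparent and genuinely collapses the discrete ensemble onto the CUE, and one must obtain the convergence of the normalisation $c_N(\theta)$ with enough uniformity in $\theta$ (this being the other place where $M-N\to\infty$ enters quantitatively, or where \cite{bl} is invoked); the remaining steps are soft.
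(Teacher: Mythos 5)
Your argument is correct, but it reaches the key probabilistic ingredient by a genuinely different route than the paper. The paper proves Proposition \ref{prop:linstat} by computing the joint moment generating function of the power sums: via Proposition \ref{prop:dischs} this is a discrete Toeplitz determinant with a smooth symbol, Proposition \ref{prop:detsing} together with Lemma \ref{le:nsdet} reduces it to the continuum determinant up to a factor $1+\mathcal{O}(e^{-\alpha(M-N)})$, and the strong Szeg\"o theorem (Theorem \ref{th:szego}) identifies the Gaussian limit; Proposition \ref{prop:approxconv} is then deduced through a Skorokhod coupling giving uniform almost sure convergence of the truncated field, together with the uniform convergence of the normalization. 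You instead exploit the projection-kernel determinantal structure with respect to the normalized counting measure on $\mathcal{D}_M$ and run the method of moments: every fixed mixed moment of $(p_1,\dots,p_L)$ expands, term by term, exactly as in the CUE, the only difference being $\ind[M\mid n]$ in place of $\ind[n=0]$ for the elementary integrals, and since all exponents occurring are of size $N-1+\mathcal{O}(1)$, the hypothesis $M-N\to\infty$ makes the discrete and CUE moments coincide exactly for large $N$; the Diaconis--Shahshahani evaluation \cite{ds} and moment-determinacy of the Gaussian finish that step. This is more elementary and self-contained for the linear statistics (no Fredholm determinant or Szeg\"o theorem needed there), and it makes transparent why $M-N\to\infty$ is the right condition --- your identity $\E|p_j|^2=\min(j,M-N)$ is a nice illustration (it holds for $j\le N$; for $N<j<M$ it reads $\min(N,M-j)$, which is immaterial here). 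The paper's MGF route, by contrast, reuses machinery it needs anyway and gives the stronger quantitative statement of an $\mathcal{O}(e^{-\alpha(M-N)})$ error uniform over bounded Fourier data. For the normalization $c_N(\theta)$ you cannot avoid that machinery (it is an exponential, not a polynomial, statistic), and there your treatment coincides with the paper's; note that the uniformity in $\theta$ genuinely requires the uniformity on $A_R$ from Lemma \ref{le:nsdet}, since the lattice $\mathcal{D}_M$ breaks rotation invariance of the discrete determinant even though the continuum one is $\theta$-independent. Two harmless slips: the exponent bound should be $|n_a|\le N-1+\sum_j j(a_j+b_j)$ rather than $N-1+L$ when several factors of a higher moment land on the same particle (still eventually $<M$ because the moment is fixed), and your final soft step (continuous mapping on $C(\T)$ plus Slutsky, then pushing forward to Radon measures) is a valid substitute for the paper's coupling argument.
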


This will actually follow from convergence of certain linear statistics.

\begin{proposition}\label{prop:linstat}
Let $(Z_j)_{j=1}^\infty$ be i.i.d. standard complex Gaussians and for $j\in \Z_+$, let 

\begin{equation}
\widetilde{Z}_j=\sum_{k=1}^N z_k^j.
\end{equation}

Then for any fixed $l\in \Z_+$, and any $M, N$ such that $M-N\to\infty$ as $N\to\infty$,

\begin{equation}
(\widetilde{Z}_1,...,\widetilde{Z}_l)\stackrel{d}{\to}(Z_1,\sqrt{2}Z_2,...,\sqrt{l}Z_l)
\end{equation}

\noindent as $N\to\infty$.
\end{proposition}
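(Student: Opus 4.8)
The plan is to prove the stated convergence by the method of moments, exploiting that $\mathbb{P}_{N,M}$ carries a Schur-function orthogonality relation whose range of validity is controlled by both $N$ and $M-N$. For fixed $N,M$ the variables $\widetilde Z_j$ are bounded, so all their moments exist; and the limit vector $(Z_1,\sqrt 2 Z_2,\dots,\sqrt l Z_l)$ is Gaussian, hence determined by its moments. It therefore suffices to show that for every pair of finitely supported sequences $(a_j)_{j\ge 1},(b_j)_{j\ge 1}$ of nonnegative integers,
\[
\E\Big[\,\prod_{j\ge 1}\widetilde Z_j^{\,a_j}\,\overline{\widetilde Z_j}^{\,b_j}\Big]\ \longrightarrow\ \prod_{j\ge 1} j^{a_j}\,a_j!\,\delta_{a_j,b_j}\qquad (N\to\infty),
\]
the right-hand side being the corresponding mixed moment of the Gaussian limit.

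The key input I would establish is the identity
\[
\E_{N,M}\big[\,s_\lambda(z_1,\dots,z_N)\,\overline{s_\mu(z_1,\dots,z_N)}\,\big]=\delta_{\lambda\mu},
\]
valid for all partitions $\lambda,\mu$ with at most $N$ rows and at most $M-N$ columns (taking $\lambda=\mu=\emptyset$ this simultaneously yields $Z_{N,M}=N!M^N$, i.e.\ \eqref{eq:Z}). To prove it, write $s_\lambda(z)=\det\big(z_i^{\lambda_a+N-a}\big)_{i,a}/\det\big(z_i^{N-a}\big)_{i,a}$, so that the Vandermonde factor cancels and $s_\lambda(z)\overline{s_\mu(z)}\prod_{i<j}|z_i-z_j|^2=\det\big(z_i^{\lambda_a+N-a}\big)\,\overline{\det\big(z_i^{\mu_b+N-b}\big)}$; summing over $z\in\mathcal{D}_M^N$ and applying the Andreief (Heine) identity reduces this to $N!\det\big(\sum_{z\in\mathcal{D}_M}z^{(\lambda_a+N-a)-(\mu_b+N-b)}\big)_{a,b}$, after which the elementary fact $\sum_{z\in\mathcal{D}_M}z^{n}=M\,\ind[M\mid n]$ finishes it: under the row/column constraints the shifted parts $\lambda_a+N-a$ and $\mu_b+N-b$ are strictly decreasing and lie in $\{0,\dots,M-1\}$, so the congruence forces equality of the corresponding shifted parts, and the matrix of indicators is a permutation matrix exactly when $\lambda=\mu$ (and then it is the identity).

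Granting this, the conclusion follows exactly as in the Diaconis--Shahshahani computation for the CUE. Writing $\prod_j\widetilde Z_j^{a_j}=p_\rho(z)$ and $\prod_j\overline{\widetilde Z_j}^{b_j}=\overline{p_\sigma(z)}$, where $\rho,\sigma$ are the partitions with $a_j$, resp.\ $b_j$, parts equal to $j$, and using the Frobenius expansion $p_\rho=\sum_{\lambda\vdash|\rho|}\chi^\lambda(\rho)\,s_\lambda$, the orthogonality identity gives $\E_{N,M}[p_\rho\,\overline{p_\sigma}]=\sum_{\lambda,\mu}\chi^\lambda(\rho)\chi^\mu(\sigma)\,\delta_{\lambda\mu}=\langle p_\rho,p_\sigma\rangle=z_\rho\,\delta_{\rho\sigma}$, with $z_\rho=\prod_j j^{a_j}a_j!$, provided every partition of $|\rho|$ and of $|\sigma|$ has at most $N$ rows and at most $M-N$ columns, i.e.\ provided $\min(N,M-N)\ge\max\big(\sum_j ja_j,\ \sum_j jb_j\big)$. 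Since the sequences $(a_j),(b_j)$ are fixed with finite support, this right-hand side is a fixed integer, so both constraints hold for all large $N$ because $N\to\infty$ and $M-N\to\infty$; thus the mixed moment is eventually exactly equal to $\prod_j j^{a_j}a_j!\,\delta_{a_j,b_j}$, which is the claimed convergence.

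The argument is essentially bookkeeping once the Schur-orthogonality identity is in hand, so I do not expect a serious analytic obstacle; the point requiring genuine care is isolating the exact range of partitions for which the identity holds, and it is precisely there that the hypothesis $M-N\to\infty$ (rather than merely $M\to\infty$) is consumed: the column bound $\lambda_1\le M-N$ is the discrete counterpart of the CUE row bound $\ell(\lambda)\le N$, and if $M-N$ stayed bounded the identity would fail for partitions with wide rows — which is the mechanism underlying the conjectural deviation from the continuum picture discussed in the final section.
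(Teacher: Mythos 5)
Your proof is correct, but it takes a genuinely different route from the paper. The paper proves this proposition by computing the joint moment generating function $\E\exp\bigl(\sum_j(t_j\widetilde X_j+s_j\widetilde Y_j)\bigr)$, recognizing it via Proposition \ref{prop:dischs} as a discrete Toeplitz determinant $T_{N-1}(e^V)$ with a non-singular symbol, then invoking Proposition \ref{prop:detsing} together with Lemma \ref{le:nsdet} to write $T_{N-1}(e^V)=\mathcal{T}_{N-1}(e^V)\bigl(1+\mathcal{O}(e^{-\alpha(M-N)})\bigr)$, and finally applying the strong Szeg\H{o} theorem; the hypothesis $M-N\to\infty$ enters through the Fredholm-determinant estimate. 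You instead run the Diaconis--Shahshahani moment computation directly on the discrete gas: the Schur orthogonality $\E_{N,M}[s_\lambda\overline{s_\mu}]=\delta_{\lambda\mu}$ for $\ell(\lambda),\ell(\mu)\le N$ and $\lambda_1,\mu_1\le M-N$ (proved correctly via the bialternant formula, Andreief summation, and $\sum_{z\in\mathcal D_M}z^n=M\ind[M\mid n]$, which also recovers $Z_{N,M}=N!M^N$), followed by the Frobenius expansion and character orthogonality, gives that every fixed mixed moment of $(\widetilde Z_1,\dots,\widetilde Z_l)$ is \emph{exactly} equal to the corresponding Gaussian moment once $\min(N,M-N)$ exceeds the degree, and moment-determinacy of the Gaussian limit finishes the argument. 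Your approach is more elementary and in a sense sharper here (exact finite-$N$ moment identities rather than an $\mathcal{O}(e^{-\alpha(M-N)})$ correction to the MGF), and it isolates cleanly why $M-N\to\infty$ is the right hypothesis; the paper's route has the advantage of reusing exactly the Toeplitz/Fredholm machinery (Proposition \ref{prop:detsing}, Lemma \ref{le:nsdet}) that is needed anyway for the singular symbols and the uniform estimates in Proposition \ref{prop:fh}, so the linear-statistics result comes essentially for free within that framework.
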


\begin{remark}
Note that the conditions on $\beta,N,$ and $M$ are much weaker here than in our main theorem. The fact that we don't have any restrictions on $\beta$ is due to the regularized field $X_L$ being a non-singular object so exponentiation is trivial (the convergence of linear statistics is essentially a statement about convergence of the Fourier coefficients of $\log F_{N,M,L}^\beta$ to those of the field $\beta X_L$). The fact that we have less restrictive conditions on $N$ and $M$ is due to the fact that the characteristic polynomial seems to be more sensitive to the discreteness of the model than the linear statistics.
\end{remark}

The next ingredient for our proof will be estimating the variance of the approximation due to introducing $L$.

\begin{proposition}\label{prop:var}
For any $\beta\in(0,\sqrt{2})$, any continuous $f:\T\to[0,\infty)$, and $N,M$ such that $N/M\to 0$ as $N\to\infty$,

\begin{equation}
\lim_{L\to\infty}\lim_{N\to\infty}\E\left(\left(\int_0^{2\pi}f(e^{i\theta})(\mu_{N,M,L}^\beta(d\theta)-\mu_{N,M}^\beta(d\theta))\right)^2\right)=0.
\end{equation}
\end{proposition}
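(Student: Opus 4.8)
The plan is to expand the square into a double integral of two-point functions, to recognize each two-point function as a ratio of discrete Toeplitz determinants with at most two Fisher--Hartwig singularities, and to feed in the asymptotics of such determinants that are proved in the sequel. Write $\Phi_{N,M}(\theta)=F_{N,M}^{\beta}(e^{i\theta})/\E F_{N,M}^{\beta}(e^{i\theta})$ and $\Phi_{N,M,L}(\theta)=F_{N,M,L}^{\beta}(e^{i\theta})/\E F_{N,M,L}^{\beta}(e^{i\theta})$, so that $\mu_{N,M}^{\beta}(d\theta)=\Phi_{N,M}(\theta)\frac{d\theta}{2\pi}$ and $\mu_{N,M,L}^{\beta}(d\theta)=\Phi_{N,M,L}(\theta)\frac{d\theta}{2\pi}$. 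Expanding the square in the statement as a deterministic double integral, taking expectations, and applying Tonelli to each of the four resulting non-negative terms, the quantity of interest equals
\[
\iint_{[0,2\pi)^2} f(e^{i\theta})f(e^{i\theta'})\,\mathcal E^{L}_{N,M}(\theta,\theta')\,\frac{d\theta}{2\pi}\frac{d\theta'}{2\pi},\qquad
\mathcal E^{L}_{N,M}:=R^{L,L}_{N,M}-R^{L,\infty}_{N,M}-R^{\infty,L}_{N,M}+R^{\infty,\infty}_{N,M},
\]
where $R^{a,b}_{N,M}(\theta,\theta')=\E[G^{a}(e^{i\theta})G^{b}(e^{i\theta'})]/(\E G^{a}(e^{i\theta})\,\E G^{b}(e^{i\theta'}))$ with $G^{\infty}=F_{N,M}^{\beta}$ and $G^{L}=F_{N,M,L}^{\beta}$. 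By the Heine--Andr\'eief identity each numerator and denominator here is a discrete Toeplitz determinant whose symbol is a product of factors of the form $|z-e^{i\theta}|^{\beta}$ or $|z-e^{i\theta'}|^{\beta}$ (a Fisher--Hartwig singularity of strength $\beta/2$) in the $G^{\infty}$ slots and of the bounded smooth weight $\exp(-\beta\,\Re\sum_{j=1}^{L}\tfrac1j e^{-ij\theta}z^{j})$ on $|z|=1$ in the $G^{L}$ slots; thus each $R^{a,b}_{N,M}$ is a ratio of discrete Toeplitz determinants with at most two Fisher--Hartwig singularities.

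Next I would compute the $N\to\infty$ limits of the four ratios for fixed $\theta\neq\theta'$, using the discrete Toeplitz determinant asymptotics with Fisher--Hartwig singularities proved later in this note (Proposition \ref{prop:fh2} and the estimates accompanying it), available precisely because $N/M\to0$, together with the discrete strong Szego theorem of \cite{bl} in the purely smooth cases. For $R^{\infty,\infty}_{N,M}$ there are two singularities of strength $\beta/2$, and the Barnes $G$-function constants and the power $N^{\beta^{2}/4}$ attached to each singularity cancel against the two denominators, so $R^{\infty,\infty}_{N,M}(\theta,\theta')\to C_{\infty}(\theta,\theta'):=|e^{i\theta}-e^{i\theta'}|^{-\beta^{2}/2}$. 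For $R^{L,L}_{N,M}$ the symbols are smooth, and the strong Szego asymptotics give $R^{L,L}_{N,M}(\theta,\theta')\to C_{L}(\theta,\theta'):=e^{\beta^{2}\E[X_{L}(\theta)X_{L}(\theta')]}=\exp\!\big(\tfrac{\beta^{2}}{2}\sum_{k=1}^{L}\tfrac1k\cos(k(\theta-\theta'))\big)$; this limit can also be read off from Proposition \ref{prop:approxconv} together with a uniform integrability argument, the symbols being bounded. For $R^{L,\infty}_{N,M}$ and $R^{\infty,L}_{N,M}$ there is a single strength-$\beta/2$ singularity dressed by the smooth symbol, and the only factor surviving in the ratio is the Fisher--Hartwig dressing factor $b_{+}(e^{i\theta'})^{-\beta/2}b_{-}(e^{i\theta'})^{-\beta/2}$ built from the smooth part (equivalently, the cross term in the Szego expansion), which one computes to equal $\exp\!\big(\tfrac{\beta^{2}}{2}\sum_{k=1}^{L}\tfrac1k\cos(k(\theta-\theta'))\big)=C_{L}(\theta,\theta')$ as well. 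Consequently $\mathcal E^{L}_{N,M}(\theta,\theta')\to C_{\infty}(\theta,\theta')-C_{L}(\theta,\theta')$ pointwise on $\{\theta\neq\theta'\}$, and $C_{L}\to C_{\infty}$ as $L\to\infty$ since $\sum_{k=1}^{L}\tfrac1k\cos(k(\theta-\theta'))\to-\log|e^{i\theta}-e^{i\theta'}|$.

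To move the limits inside the double integral I would dominate. Since all four $R$'s are non-negative, $|\mathcal E^{L}_{N,M}|$ is at most their sum, and for each fixed $L$ this sum is bounded, uniformly in $N$ and in $\theta,\theta'$, by $C'_{L}\,|e^{i\theta}-e^{i\theta'}|^{-\beta^{2}/2}$: for the three ratios having a $G^{L}$ slot the relevant symbols are bounded above and below, so those ratios stay bounded; for $R^{\infty,\infty}_{N,M}$ one argues separately in the regimes $|\theta-\theta'|\gtrsim 1/N$ and $|\theta-\theta'|\lesssim 1/N$ (see below). As $f$ is bounded and $|e^{i\theta}-e^{i\theta'}|^{-\beta^{2}/2}\in L^{1}([0,2\pi)^{2})$ exactly when $\beta<\sqrt2$, dominated convergence gives, for each fixed $L$,
\[
\lim_{N\to\infty}\iint_{[0,2\pi)^2} f(e^{i\theta})f(e^{i\theta'})\,\mathcal E^{L}_{N,M}(\theta,\theta')\,\frac{d\theta}{2\pi}\frac{d\theta'}{2\pi}=\iint_{[0,2\pi)^2} f(e^{i\theta})f(e^{i\theta'})\big(C_{\infty}(\theta,\theta')-C_{L}(\theta,\theta')\big)\,\frac{d\theta}{2\pi}\frac{d\theta'}{2\pi}.
\]
The integrand on the right tends to $0$ pointwise as $L\to\infty$ and is dominated, uniformly in $L$, by a constant times $|e^{i\theta}-e^{i\theta'}|^{-\beta^{2}/2}$: indeed $0\le C_{L}\lesssim C_{\infty}$ uniformly in $L$ and $\theta,\theta'$, since for $|\theta-\theta'|\gtrsim 1/L$ the tail $\sum_{k>L}\tfrac1k\cos(k(\theta-\theta'))$ is $O(1)$ by Abel summation, whereas for $|\theta-\theta'|\lesssim 1/L$ one has $C_{L}\le\exp(\tfrac{\beta^{2}}{2}\sum_{k\le L}\tfrac1k)\lesssim L^{\beta^{2}/2}\lesssim C_{\infty}$. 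A second application of dominated convergence then shows that the right-hand side tends to $0$ as $L\to\infty$, which is the assertion of the proposition.

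The genuinely technical step is the uniform-in-$N$ bound $R^{\infty,\infty}_{N,M}(\theta,\theta')\le C\,|e^{i\theta}-e^{i\theta'}|^{-\beta^{2}/2}$ in the regime where the singularities at $e^{i\theta}$ and $e^{i\theta'}$ coalesce. When $|\theta-\theta'|$ stays above a fixed negative power of $N$ the ordinary Fisher--Hartwig asymptotics with two separated singularities give the bound with room to spare; for $|\theta-\theta'|\lesssim 1/N$ one needs Fisher--Hartwig asymptotics, or at least two-sided bounds, that hold uniformly as the singularities merge---exactly the kind of estimate supplied by \cite{ck,dik1,dik2}---which in that regime yield $\E[F_{N,M}^{\beta}(e^{i\theta})F_{N,M}^{\beta}(e^{i\theta'})]\asymp N^{\beta^{2}/2}$ and $\E F_{N,M}^{\beta}(e^{i\theta})\,\E F_{N,M}^{\beta}(e^{i\theta'})\asymp N^{\beta^{2}/2}$, so that $R^{\infty,\infty}_{N,M}\asymp 1\lesssim|e^{i\theta}-e^{i\theta'}|^{-\beta^{2}/2}$ there; note that $\beta<\sqrt2$ is used only through integrability of the dominating function, not in this pointwise bound. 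This merging-regime uniformity, together with the (less delicate) uniformity of the one-singularity asymptotics in the location of the singularity and in the smooth factor, is the main obstacle; the rest is bookkeeping with the asymptotic formulas established earlier in the paper.
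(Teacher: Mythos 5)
Your plan is essentially the paper's proof: expand the square into ratios of two-point and one-point moments, identify each as a discrete Toeplitz determinant with at most two Fisher--Hartwig singularities, pass to the continuum asymptotics via the Fredholm-determinant comparison (this is exactly what parts 1)--6) of Proposition \ref{prop:fh} package), take $N\to\infty$ using uniform estimates including the Claeys--Krasovsky merging-singularity regime, and then let $L\to\infty$. The two places where you deviate are harmless but worth noting. First, for the $L\to\infty$ step you dominate $C_L\lesssim C_\infty$ uniformly in $L$ (Abel summation for $|\theta-\theta'|\gtrsim 1/L$, the trivial bound $C_L\lesssim L^{\beta^2/2}\lesssim C_\infty$ otherwise) and apply dominated convergence; the paper instead avoids this comparison by combining Fatou's lemma with the non-negativity of the variance, which forces $\int ff\,C_L\to\int ff\,C_\infty$. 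Both are valid. Second, your $N\to\infty$ interchange is a genuine dominated-convergence argument with the single dominating function $C\,|e^{i\theta}-e^{i\theta'}|^{-\beta^2/2}$, whereas the paper splits at $|\theta-\theta'|=2t_0$ and at $|\theta-\theta'|=N^{-1}$ and lets $t_0\to0$ at the end; again both routes work, given the uniformity statements in Proposition \ref{prop:fh}.

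One concrete error you should correct: in the coalescing regime $|\theta-\theta'|\lesssim N^{-1}$ you assert $\E\bigl[F^{\beta}_{N,M}(e^{i\theta})F^{\beta}_{N,M}(e^{i\theta'})\bigr]\asymp N^{\beta^2/2}$ and hence $R^{\infty,\infty}_{N,M}\asymp 1$. This is false: when the two singularities merge the effective exponent doubles, and part 6) of Proposition \ref{prop:fh} (equivalently the uniform asymptotics of \cite{ck}, using $\sigma(s)=\tfrac{\beta^2}{2}+\mathcal O(|s|^\delta)$ as $s\to0$) gives $\E[FF]\asymp N^{\beta^2}$, so $R^{\infty,\infty}_{N,M}\asymp N^{\beta^2/2}$ there. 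Your dominating bound survives, since on that region $|e^{i\theta}-e^{i\theta'}|^{-\beta^2/2}\gtrsim N^{\beta^2/2}$, so the dominated convergence argument is unaffected once the asymptotic is stated correctly; alternatively one can argue as the paper does, noting that this region has measure $\mathcal O(N^{-1})$ and contributes $\mathcal O(N^{\beta^2/2-1})\to0$ because $\beta^2<2$. Also, your remark that the three ratios involving an $L$-slot "stay bounded because the symbols are bounded" is not by itself a proof (boundedness of the symbol alone gives $e^{\mathcal O(N)}$ factors); what makes it true is the uniform strong Szeg\H{o}/Fisher--Hartwig asymptotics with the zero-mode cancelling, i.e. the uniform $o(1)$ in parts 1), 3), 4) of Proposition \ref{prop:fh}, which you do invoke for the limits and should invoke for the bounds as well.
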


Expanding the square in the proposition above, we see that this boils down to precise, uniform estimates on (possibly mixed) moments of $F_{N,M,L}^\beta$ and $F_{N,M}^\beta$. The relevant estimates are given in the following result.

\begin{proposition}\label{prop:fh}

Let $\beta\in(0,\sqrt{2})$ and $L\in \Z_+$. 

\vspace{0.3cm}

1) If $M-N\to\infty$ as $N\to\infty$,

\begin{equation}
\lim_{N\to\infty}\E(F_{N,M,L}^\beta(e^{i\theta}))=e^{\frac{\beta^2}{4}\sum_{j=1}^L\frac{1}{j}}
\end{equation}

\noindent uniformly in $\theta$.

\vspace{0.3cm}

2) If $N/M\to 0$ as $N\to \infty$, 

\begin{equation}
\E(F_{N,M}^\beta(e^{i\theta}))=N^{\frac{\beta^2}{4}}\frac{G\left(1+\frac{\beta}{2}\right)^2}{G(1+\beta)}\times(1+\mathit{o}(1)),
\end{equation}

\noindent where $G$ is the Barnes $G$ function and $\mathit{o}(1)$ is uniform in $\theta$.

\vspace{0.3cm}

3) If $M-N\to\infty$ as $N\to \infty$, 

\begin{equation}
\E\left(F_{N,M,L}^\beta(e^{i\theta})F_{N,M,L}^\beta(e^{i\theta'})\right)=E_{N,L}(\theta,\theta')\times(1+\mathit{o}(1)),
\end{equation}

\noindent where $\mathit{o}(1)$ is uniform in $\theta,\theta'$ and as $N\to\infty$, $E_{N,L}(\theta,\theta')$ increases to $\E(e^{\beta X_L(\theta)}e^{\beta X_L(\theta')})$.

\vspace{0.3cm}

4) If $N/M\to 0$ as $N\to\infty$, then 

\begin{align}
\notag \E(F_{N,M,L}^\beta(e^{i\theta})F_{N,M}^\beta(e^{i\theta'}))&=e^{\frac{\beta^2}{4}\sum_{j=1}^L \frac{1}{j}}e^{\frac{\beta^2}{2}\sum_{j=1}^L \frac{1}{j}\cos(j(\theta-\theta'))}\\
&\times N^{\frac{\beta^2}{4}}\frac{G\left(1+\frac{\beta}{2}\right)^2}{G(1+\beta)}\times(1+\mathit{o}(1)),
\end{align}

\noindent where $\mathit{o}(1)$ is uniform in $\theta,\theta'$.

\vspace{0.3cm}

5) If $N/M\to 0$ as $N\to \infty$ and there is a fixed $\delta>0$ such that $|\theta-\theta'|>\delta$, then 

\begin{equation}
\E\left(F_{N,M}(e^{i\theta})F_{N,M}(e^{i\theta'})\right)=N^{\frac{\beta^2}{2}}|e^{i\theta}-e^{i\theta'}|^{-\frac{\beta^2}{2}}\left(\frac{G\left(1+\frac{\beta}{2}\right)^2}{G(1+\beta)}\right)^2\times(1+\mathit{o}(1)),
\end{equation}

\noindent where $\mathit{o}(1)$ is uniform on $|\theta-\theta'|>\delta$.

\vspace{0.3cm}

6) There is a fixed $t_0$ such that if $N/M\to 0$ as $N\to \infty$, then for $|\theta-\theta'|<2t_0$

\begin{align}
\notag \E\left(F_{N,M}(e^{i\theta})F_{N,M}(e^{i\theta'})\right)&=N^{\beta^2}\frac{G(1+\beta)^2}{G(1+2\beta)}e^{\int_0^{-iN|\theta-\theta'|}\frac{1}{s}\left(\sigma(s)-\frac{\beta^2}{2}\right)ds}\\
&\times e^{-\frac{\beta^2}{2}\log \frac{2\sin\frac{|\theta-\theta'|}{2}}{|\theta-\theta'|}}\times(1+\mathit{o}(1)),
\end{align}

\noindent where $\mathit{o}(1)$ is uniform in $|\theta-\theta'|<2t_0$ and $\sigma$ is a continuous function (depending only on $\beta$ - not on $N,M,\theta,\theta',$ or $t_0$ ) on $-i\R_+$ with the following asymptotics: there exists a $\delta>0$ such that 

\begin{equation}
\sigma(s)=\frac{1}{2}\beta^2+\mathcal{O}(|s|^\delta)
\end{equation} 

\noindent as $s\to 0$ along the negative imaginary axis and 

\begin{equation}
\sigma(s)=\mathcal{O}(|s|^{-\delta})
\end{equation}

\noindent as $s\to\infty$ along the negative imaginary axis.

\end{proposition}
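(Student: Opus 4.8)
The plan is to express each of the six expectations as a discrete Toeplitz determinant, to rewrite it via the representation established later in this note (the generalisation of \cite{bl}) as a continuum Toeplitz determinant with the same symbol times a Fredholm determinant, and then to read off the asymptotics from the known continuum Fisher--Hartwig asymptotics together with a proof that the Fredholm factor tends to $1$. The first step is to observe that each quantity is an expectation of a multiplicative statistic $\E\prod_{k=1}^N g(z_k)$ for an explicit symbol $g$ on $\T$: in (1) and (3) one has $g=e^{V}$ with $V(e^{is})=-\beta\sum_{j=1}^L\frac1j\cos(j(s-\theta))$ (and its two-point analogue), with no singularities; in (2), $g(z)=|e^{i\theta}-z|^\beta$, a single Fisher--Hartwig root singularity of parameter $\beta/2$; in (4), $g(z)=e^{V(z)}|e^{i\theta'}-z|^\beta$; in (5) and (6), $g(z)=|e^{i\theta}-z|^\beta|e^{i\theta'}-z|^\beta$, two root singularities of parameter $\beta/2$. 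Writing $\prod_{i<j}|z_i-z_j|^2=\det(z_i^{j-1})_{i,j=1}^N\,\overline{\det(z_i^{j-1})_{i,j=1}^N}$ on $\mathcal D_M^N$ (a Vandermonde determinant), applying the Andreief identity, and using $Z_{N,M}=N!M^N$ from \eqref{eq:Z}, each such expectation equals the discrete Toeplitz determinant $\det(\widehat g_M(j-k))_{j,k=0}^{N-1}$ with $\widehat g_M(\ell)=\frac1M\sum_{z\in\mathcal D_M}z^{-\ell}g(z)$, which the representation established later turns into $D_N(g)\,\det(1-K_{N,M})$, where $D_N(g)$ is the continuum Toeplitz determinant of $g$ and $K_{N,M}$ is an explicit trace-class operator built from the orthogonal polynomials on the unit circle with weight $g$ and the lattice $\mathcal D_M$.

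The second step is the continuum asymptotics of $D_N(g)$. For the smooth symbols of (1) and (3) this is the strong Szeg\H{o} theorem, $D_N(e^V)\to\exp(\sum_{k\ge1}kV_kV_{-k})$; computing the Fourier coefficients $V_k=-\frac\beta2\frac1k e^{-ik\theta}$ (resp. $-\frac\beta2\frac1k(e^{-ik\theta}+e^{-ik\theta'})$) reproduces $e^{\frac{\beta^2}{4}\sum_{j=1}^L\frac1j}$ in (1) and $\exp\big(\frac{\beta^2}{2}\sum_{j=1}^L\frac1j(1+\cos(j(\theta-\theta')))\big)=\E(e^{\beta X_L(\theta)}e^{\beta X_L(\theta')})$ in (3), with the convergence monotone in $N$ along the relevant truncation $E_{N,L}$. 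For (2), the classical one-singularity Fisher--Hartwig formula gives $N^{\beta^2/4}G(1+\beta/2)^2/G(1+\beta)$. For (4), the Fisher--Hartwig formula with a smooth factor $e^V$ present contributes additionally the Szeg\H{o} constant $e^{\frac{\beta^2}{4}\sum\frac1j}$ and the singularity--potential interaction $\exp\big(-\frac\beta2\sum_{k\ge1}(V_k e^{ik\theta'}+V_{-k}e^{-ik\theta'})\big)=e^{\frac{\beta^2}{2}\sum_{j=1}^L\frac1j\cos(j(\theta-\theta'))}$. For (5), the two-singularity Fisher--Hartwig formula at fixed angular distance factorises into two copies of the (2)-constant together with the interaction $|e^{i\theta}-e^{i\theta'}|^{-\beta^2/2}$. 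For (6) I would use the asymptotics for two merging Fisher--Hartwig singularities from \cite{ck} (with the orthogonal-polynomial input of \cite{mfmls1,mfmls2}): these yield the prefactor $N^{\beta^2}G(1+\beta)^2/G(1+2\beta)$ of the coalesced singularity of parameter $\beta$, the scaling factor $\exp\big(\int_0^{-iN|\theta-\theta'|}\frac1s(\sigma(s)-\frac{\beta^2}{2})\,ds\big)$ governed by the associated isomonodromic $\sigma$-function, and the elementary factor $e^{-\frac{\beta^2}{2}\log\frac{2\sin(|\theta-\theta'|/2)}{|\theta-\theta'|}}$ converting chordal to arclength distance; the asserted behaviour $\sigma(s)=\frac{\beta^2}{2}+O(|s|^\delta)$ as $s\to0$ and $\sigma(s)=O(|s|^{-\delta})$ as $s\to\infty$ is read off from \cite{ck}, and is exactly what makes (6) reduce to (2) in the coincident limit $\theta'\to\theta$ and to (5) once $N|\theta-\theta'|\to\infty$.

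The third step is to show that $\det(1-K_{N,M})=1+o(1)$, uniformly in the singularity locations. For the smooth symbols of (1) and (3) this is essentially the content of \cite{bl} and already holds when $M-N\to\infty$. For the symbols with singularities I would estimate $\|K_{N,M}\|_{\mathrm{tr}}$ using the Riemann--Hilbert asymptotics of the associated orthogonal polynomials from \cite{mfmls1,mfmls2,ck}: away from the singularities all relevant quantities are uniformly controlled, and the only delicate contributions come from the $O(1)$ points of $\mathcal D_M$ nearest to each singularity. The target is a bound $\|K_{N,M}\|_{\mathrm{tr}}\lesssim (N/M)^{c}$, up to harmless powers of $\log N$, uniform in $\theta$ (and in $\theta'$, including as $\theta'\to\theta$ in (6)), so that $N/M\to0$ forces $\det(1-K_{N,M})\to1$; combined with the two previous steps this gives all of (1)--(6).

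I expect the main obstacle to be exactly this last step in the presence of Fisher--Hartwig singularities, and most acutely in part (6): one needs the orthogonal-polynomial asymptotics uniformly not only away from the singularities of $g$, but uniformly as the two singularities coalesce and uniformly in the position of a singularity relative to the lattice $\mathcal D_M$, and one must control the error terms in those asymptotics finely enough to see that the resulting bound on $\|K_{N,M}\|_{\mathrm{tr}}$ still decays like a positive power of $N/M$. Assembling (or importing from \cite{mfmls1,mfmls2,ck}) such uniform estimates, and the bookkeeping that shows they degrade at worst by a power of $N/M$, is where essentially all of the work lies.
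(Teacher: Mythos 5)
Your proposal is correct and takes essentially the same route as the paper: each expectation is written as a discrete Toeplitz determinant, factored via Proposition \ref{prop:detsing} into a continuum Toeplitz determinant times a Fredholm determinant, the continuum factor is read off from Theorem \ref{th:szego} and Theorem \ref{th:fhcont} (including the Claeys--Krasovsky merging-singularity regime for part 6), and the Fredholm factor is shown to be $1+\mathcal{O}(N/M)$ uniformly in the singularity locations using the Riemann--Hilbert asymptotics of the orthogonal polynomials. The uniform Fredholm estimates you defer in your third step are precisely the content of Lemmas \ref{le:nsdet}, \ref{le:sdet}, and \ref{le:msdet} (the paper bounds $\mathrm{tr}\,K$ and the Hilbert--Schmidt norm of a conjugated kernel and applies Theorem \ref{th:simon}, rather than the trace norm directly), and the proof of the proposition itself is just the combination you describe, with uniformity in $\theta,\theta'$ obtained from rotation invariance of the continuum determinant.
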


The way these results are obtained is through a discrete Toeplitz determinant representation of moments of $F_{N,M,L}^\beta$ and $F_{N,M}^\beta$. We then show, mimicking an argument for non-singular Toeplitz determinants given in \cite{bl}, that this discrete Toeplitz determinant can be represented in terms of a standard, or continuum, Toeplitz determinant with Fisher-Hartwig singularities as well as a certain Fredholm determinant. The asymptotics of the continuum Toeplitz determinants are well known (see \cite{dik1,dik2,ck} and references therein), so the problem then boils down to the analysis of the asymptotics of the Fredholm determinant and this is done using asymptotic properties of orthogonal polynomials with respect to the Fisher-Hartwig weight on the unit circle (which have been worked out in or follow from results in \cite{mfmls1,mfmls2,dik1,dik2,ck}). One of the main results of this note is describing the regime for certain discrete Toeplitz determinants with FH-singularities where the classical Fisher-Hartwig formula still holds. We thus write the statement down explicitly (statements $2), 4)$, and $5)$ are essentially special cases of this result).

\begin{proposition}\label{prop:fh2}
Let $V$ be a Laurent polynomial of the form 

\begin{equation}
V(z)=\sum_{j=0}^K \frac{1}{2}(\alpha_j z^j+\overline{\alpha_j}z^{-j}),
\end{equation}

\noindent let $(w_j)_{j=1}^m$ be fixed distinct points on the unit circle, and let $\beta_j>0$ for $j=1,...,m$. If we write for $|z|=1$,

\begin{equation}
f(z)=e^{V(z)}\prod_{j=1}^m|z-w_j|^{\beta_j}
\end{equation}

\noindent and

\begin{equation}
T_{N-1}(f)=\det\left(\frac{1}{M}\sum_{j=0}^{M-1}f\left(e^{\frac{2\pi i j}{M}}\right)e^{-(p-q)\frac{2\pi i j}{M}}\right)_{p,q=0}^{N-1}
\end{equation}

\noindent then as $N\to\infty$ and $M\geq N$

\begin{align}
\notag T_{N-1}(f)&=e^{N\frac{\alpha_0+\overline{\alpha_0}}{2}+\frac{1}{4}\sum_{j=1}^Kj|\alpha_j|^2}\prod_{j=1}^m e^{-\frac{\beta_j}{2}\sum_{l=0}^K\left(\frac{\alpha_l}{2}w_j^l+\frac{\overline{\alpha_l}}{2}w_j^{-l}\right)} N^{\sum_{j=1}^m \frac{\beta_j^2}{4}}\\
&\times \prod_{1\leq p<q\leq m}|w_p-w_q|^{-\frac{\beta_p\beta_q}{2}}\prod_{j=1}^m\frac{G\left(1+\frac{\beta_j}{2}\right)^2}{G(1+\beta_j)}\left(1+\mathcal{O}\left(\frac{N}{M}\right)\right).
\end{align}

\end{proposition}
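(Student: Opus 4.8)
The plan is to deduce the proposition from three inputs: our main‑tool representation of a discrete Toeplitz determinant as a continuum Toeplitz determinant times a Fredholm determinant (the generalization of \cite{bl} established above); the classical Fisher--Hartwig asymptotics of the continuum Toeplitz determinant with symbol $f$ (from \cite{dik1,dik2,ck} and references therein); and a trace‑norm estimate showing that the Fredholm determinant equals $1+\mathcal O(N/M)$, obtained from the strong asymptotics of orthogonal polynomials with respect to a Fisher--Hartwig weight (from \cite{mfmls1,mfmls2,ck}).

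First I would invoke the representation $T_{N-1}(f)=D_{N-1}(f)\det(1+\mathcal K_{N,M})$, where $D_{N-1}(f)=\det\left(\frac{1}{2\pi}\int_0^{2\pi}f(e^{i\theta})e^{-i(p-q)\theta}\,d\theta\right)_{p,q=0}^{N-1}$ is the continuum Toeplitz determinant with the same symbol, and $\mathcal K_{N,M}$ is a trace‑class operator whose kernel is built from the orthonormal polynomials $\phi_N,\phi_N^{*}$ (equivalently $\phi_N,\phi_{N-1}$) for the positive weight $f(e^{i\theta})\tfrac{d\theta}{2\pi}$, their Cauchy transforms, and evaluations at the discretization points $e^{2\pi i j/M}$. (That $f$ is a genuine positive weight, so that Szeg\H{o} theory and this representation apply, uses that $V$ is real on the circle and that each $\beta_j>0$.) Applying the Fisher--Hartwig asymptotics to $D_{N-1}(f)$ produces exactly the leading expression in the statement: the factor $e^{N(\alpha_0+\overline{\alpha_0})/2}$ and the Szeg\H{o} constant $e^{\frac14\sum_{j=1}^K j|\alpha_j|^2}$ from the analytic part $e^V$; the power $N^{\sum_j\beta_j^2/4}$, the Barnes‑$G$ constants $\prod_j G(1+\beta_j/2)^2/G(1+\beta_j)$ and the repulsion factor $\prod_{p<q}|w_p-w_q|^{-\beta_p\beta_q/2}$ from the singularities; and the interaction factor $\prod_j e^{-\frac{\beta_j}{2}V(w_j)}$ between the analytic part and the singularities; the error here is $(1+o(1))$, in fact a negative power of $N$. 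It thus remains to prove $\det(1+\mathcal K_{N,M})=1+\mathcal O(N/M)$.

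For the Fredholm factor I would use $|\det(1+\mathcal K)-1|\le\|\mathcal K\|_1 e^{1+\|\mathcal K\|_1}$ and reduce to the trace‑norm bound $\|\mathcal K_{N,M}\|_1=\mathcal O(N/M)$, say by factoring $\mathcal K_{N,M}=\mathcal A_{N,M}\mathcal B_{N,M}$ into Hilbert--Schmidt operators and bounding $\|\mathcal K_{N,M}\|_1\le\|\mathcal A_{N,M}\|_2\|\mathcal B_{N,M}\|_2$. The Hilbert--Schmidt norms are then explicit sums over the $M$ lattice points of squared moduli of $\phi_N,\phi_N^{*}$ and their Cauchy transforms, weighted by powers of $f$ and by distances to the lattice. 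Into these I would insert the strong asymptotics of $\phi_N,\phi_N^{*}$ on the unit circle: Szeg\H{o}‑type asymptotics away from the $w_j$, and Bessel/confluent‑hypergeometric local parametrices on the scale $\asymp 1/N$ around each $w_j$. Away from the singularities the lattice sums are Riemann sums of smooth functions, and together with the $L^2$‑normalization $\int|\phi_N|^2 f\,\tfrac{d\theta}{2\pi}=1$ and the extra decay carried by the operator they contribute at the asserted order; the contribution of the $\mathcal O(1)$ lattice points within distance $\lesssim 1/N$ of some $w_j$ must be handled separately, using the local parametrices together with the facts that $M\ge N$ (so no lattice point meets a $w_j$) and that each point carries weight $1/M$, to see that even there the total is $\mathcal O(N/M)$.

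Combining these ingredients gives the stated asymptotics. (Note that at the extreme $M=N$ the assertion becomes merely a boundedness statement, consistent with the fact — visible from a direct circulant computation, which gives $T_{N-1}(f)=e^{N(\alpha_0+\overline{\alpha_0})/2}\prod_j|w_j^N-1|^{\beta_j}$ when $K<N$ — that the discrete and continuum determinants then differ by a genuine power of $N$.) The main obstacle, as the outline already anticipates, is the analysis underlying the trace‑norm step: extracting from \cite{mfmls1,mfmls2,ck} the asymptotics of $\phi_N,\phi_N^{*}$ and their Cauchy transforms in a form uniform in $N$ and uniform over the unit circle including the parametrix regions near the $w_j$, and then carrying out the trace‑norm bookkeeping so that it is uniform in $N$, $M$, the exponents $\beta_j$, and the positions $w_j$ as they range over compact subsets of the set of $m$‑tuples of distinct points on the circle — this last uniformity being exactly what is needed to deduce statements $2)$, $4)$ and $5)$ of Proposition~\ref{prop:fh} from the present result. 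Everything else is routine but lengthy.
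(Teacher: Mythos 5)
Your proposal follows essentially the same route as the paper's proof: the Baik--Liu-type factorization of Proposition \ref{prop:detsing} ($T_{N-1}(f)=\mathcal{T}_{N-1}(f)\det(1+K)$), the continuum Fisher--Hartwig asymptotics of Theorem \ref{th:fhcont} for the first factor, and the estimate $\det(1+K)=1+\mathcal{O}(N/M)$ of Lemma \ref{le:sdet}, which rests on the uniform Riemann--Hilbert asymptotics of the orthogonal polynomials from \cite{mfmls1,mfmls2,ck}. The only cosmetic difference is in the Fredholm step: the paper uses Simon's inequality in terms of $\mathrm{tr}\,K$ and the Hilbert--Schmidt norm of a conjugated kernel living on contours around the unit circle (the lattice entering only through $v(z)=z^{\pm M}/(1-z^{\pm M})$), rather than your trace-norm bound via a Hilbert--Schmidt factorization and lattice-point sums.
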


\begin{remark}
So in particular, if $N/M\to 0$ as $N\to\infty$, we recover the classical Fisher-Hartwig formula. If on the other hand $N/M\to q\in(0,1]$, it is possible that this formula gets modified (for $q$ small enough, our analysis indicates that it shouldn't get modified too much - see our discussion at the end of the note).
\end{remark}

\section{Toeplitz determinants}

We will now review some facts about Toeplitz determinants (discrete and continuum) as well as prove a connection between discrete and continuum Toeplitz determinants with Fisher-Hartwig singularities.

\subsection{Toeplitz determinants and orthogonal polynomials}

We will now recall some basic facts about orthogonal polynomials on the unit circle and their relationship to Toeplitz determinants.

\begin{definition}\label{def:opuc}
Let $f:\T\to [0,\infty)$ and let $(\phi_j)_{j=0}^\infty$ be the orthonormal polynomials with respect to the weight $f(e^{i\theta})\frac{d\theta}{2\pi}$, i.e. $\phi_j(z)=\chi_j z^j+\mathcal{O}(z^{j-1})$ with $\chi_j\neq 0$ and 

\begin{equation}
\int_0^{2\pi}\phi_j(e^{i\theta})\overline{\phi_k(e^{i\theta})}f(e^{i\theta})\frac{d\theta}{2\pi}=\delta_{j,k}.
\end{equation}
\end{definition}

\begin{remark}\label{rem:polys}
It is a well known fact that such orthonormal polynomials have a determinantal representation and from this determinantal representation, one can check (e.g. using the Heine-Szeg\"o identity) that for a non-negative weight which is not Lebesgue almost everywhere zero on the unit circle, the orthonormal polynomials exist and are unique. Moreover, one can check with this argument that for a real weight, $\chi_j$ is real. For more information see e.g. \cite{dik1}.
\end{remark}

Let us now recall the definition of a continuum Toeplitz determinant.

\begin{definition}
For a $L^1$ function $f:\T\to \C$, we write 

\begin{align}
\notag \mathcal{T}_{N-1}(f)&=\det\left(\int_{\T}z^{-(j-k)}f(z)\frac{dz}{2\pi i z}\right)_{j,k=0}^{N-1}\\
&=\det\left(\int_0^{2\pi}e^{-i(j-k)\theta}f(e^{i\theta})\frac{d\theta}{2\pi}\right)_{j,k=0}^{N-1}.
\end{align}
\end{definition}

Its connection to the orthonormal polynomials is given by the following result.

\begin{proposition}\label{prop:toepprod}
Let $f:\T\to[0,\infty)$ be an $L^1$ function and let $\chi_j$ be as in Definition \ref{def:opuc}. Then
\begin{equation}
\mathcal{T}_{N-1}(f)=\prod_{j=0}^{N-1}\chi_j^{-2}.
\end{equation}
\end{proposition}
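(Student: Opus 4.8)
The plan is to exploit the standard determinantal formula for orthonormal polynomials on the unit circle in terms of Toeplitz minors, and then read off the product of leading coefficients. Let me set up notation: write $c_k = \int_0^{2\pi} e^{-ik\theta} f(e^{i\theta}) \frac{d\theta}{2\pi}$ for the Fourier coefficients of the weight, so that $\mathcal{T}_{N-1}(f) = \det(c_{j-k})_{j,k=0}^{N-1}$. Denote this Toeplitz determinant by $D_{N-1}$, and more generally write $D_n = \det(c_{j-k})_{j,k=0}^n$ for $n \geq 0$, with the convention $D_{-1} = 1$.

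First I would recall (or re-derive via Cramer's rule / the Heine--Szegő setup, as in Remark \ref{rem:polys}) the explicit determinantal representation of the monic orthogonal polynomial $\Phi_n$ of degree $n$: one has
\begin{equation*}
\Phi_n(z) = \frac{1}{D_{n-1}} \det \begin{pmatrix} c_0 & c_{-1} & \cdots & c_{-n} \\ c_1 & c_0 & \cdots & c_{-n+1} \\ \vdots & & & \vdots \\ c_{n-1} & \cdots & & c_{-1} \\ 1 & z & \cdots & z^n \end{pmatrix},
\end{equation*}
which is indeed monic of degree $n$ (the coefficient of $z^n$ comes from the cofactor of the bottom-right entry, which is $D_{n-1}$), and which is orthogonal to $1, z, \dots, z^{n-1}$ with respect to $f\,\frac{d\theta}{2\pi}$ because expanding $\int \Phi_n(e^{i\theta}) e^{-ik\theta} f(e^{i\theta})\frac{d\theta}{2\pi}$ for $0 \leq k \leq n-1$ replaces the bottom row of the determinant by a row equal to one of the rows above it. The key scalar identity I need is the value of the squared norm: $\int_0^{2\pi} |\Phi_n(e^{i\theta})|^2 f(e^{i\theta}) \frac{d\theta}{2\pi} = D_n / D_{n-1}$. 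This again follows by expanding the integral along the bottom row of the determinant for $\Phi_n$ and recognizing that the result is $\frac{1}{D_{n-1}}$ times the cofactor expansion of $D_n$ itself.

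Now the orthonormal polynomial is $\phi_n = \Phi_n / \|\Phi_n\|$ where $\|\Phi_n\|^2 = D_n/D_{n-1}$, so its leading coefficient is $\chi_n = \|\Phi_n\|^{-1} = (D_{n-1}/D_n)^{1/2}$, hence $\chi_n^{-2} = D_n / D_{n-1}$. Taking the product,
\begin{equation*}
\prod_{j=0}^{N-1} \chi_j^{-2} = \prod_{j=0}^{N-1} \frac{D_j}{D_{j-1}} = \frac{D_{N-1}}{D_{-1}} = D_{N-1} = \mathcal{T}_{N-1}(f),
\end{equation*}
a telescoping product using $D_{-1} = 1$. This is precisely the claimed identity. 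Note that the hypothesis $f \geq 0$ and $f \in L^1$ (together with $f$ not a.e. zero, which is implicit since otherwise $\chi_j$ is undefined) guarantees, as recalled in Remark \ref{rem:polys}, that all $D_j > 0$ and that the orthonormal polynomials exist and are unique, so every step above is legitimate.

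I do not expect a genuine obstacle here; the only mild care needed is the bookkeeping in the cofactor expansions establishing $\|\Phi_n\|^2 = D_n/D_{n-1}$, and being explicit that positivity of the weight forces $D_j > 0$ so that the square roots and divisions make sense. One could alternatively phrase the whole argument through the Gram--Schmidt / Gram-determinant viewpoint: $D_n$ is the Gram determinant of $1, z, \dots, z^n$ in $L^2(f\,\frac{d\theta}{2\pi})$, and it is classical that the ratio of consecutive Gram determinants equals the squared distance from $z^n$ to the span of the lower powers, which is exactly $\|\Phi_n\|^2 = \chi_n^{-2}$; the telescoping is then immediate. Either route is short, so the proof is essentially a matter of choosing the cleanest presentation.
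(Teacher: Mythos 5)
Your proof is correct: the determinantal representation of the monic polynomials, the identity $\|\Phi_n\|^2=D_n/D_{n-1}$, and the telescoping product is exactly the classical Heine--Szeg\"o argument. The paper itself states this proposition without proof, merely alluding to this determinantal representation in Remark \ref{rem:polys} and pointing to \cite{dik1}, so your write-up simply supplies the standard argument the paper takes for granted (and you correctly note that $f\geq 0$, $f\in L^1$, $f$ not a.e.\ zero gives $D_j>0$, and that only $\chi_j^2=D_{j-1}/D_j$ is needed, so any sign convention for $\chi_j$ is immaterial).
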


We will also make use of the Christoffel-Darboux identity.

\begin{proposition}\label{prop:cd}
Let $f$ be a weight on the unit circle such that the orthonormal polynomials exist and let $\overline{\phi}_j$ denote the polynomial obtained by complex conjugating the coefficients of $\phi_j$ (i.e. $\overline{\phi}_j(z)=\overline{\phi_j(\overline{z})}$). Then for $z,w\in \C$, $z,w\neq 0$, and $z\neq w$

\begin{equation}
\sum_{j=0}^{N-1}\phi_j(w)\overline{\phi}_j(z^{-1})=\frac{\frac{w^N}{z^N}\phi_N(z)\overline{\phi}_N(w^{-1})-\overline{\phi}_N(z^{-1})\phi_N(w)}{1-\frac{w}{z}}.
\end{equation}
\end{proposition}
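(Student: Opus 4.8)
The plan is to derive the identity from the Szegő recursions for orthogonal polynomials on the unit circle by a telescoping argument, proving it first for $|z|=|w|=1$ and then extending it to all admissible $z,w$ by clearing the denominator and invoking the identity theorem for Laurent polynomials. Throughout I write $\phi_n^*(z)=z^n\overline{\phi}_n(z^{-1})$ for the reversed polynomial; this is a genuine polynomial of degree $\le n$, and $\phi_0,\phi_0^*$ are constants (equal to $\chi_0$ and $\overline{\chi_0}$ respectively).

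First I would recall the Szegő recursions in orthonormal form: there exist $\alpha_n\in\C$ with $|\alpha_n|<1$ and $\rho_n=(1-|\alpha_n|^2)^{1/2}$ such that
\[
\rho_n\,\phi_{n+1}(z)=z\,\phi_n(z)-\overline{\alpha_n}\,\phi_n^*(z),\qquad \rho_n\,\phi_{n+1}^*(z)=\phi_n^*(z)-\alpha_n\,z\,\phi_n(z).
\]
This is classical for any weight for which the orthonormal polynomials exist, and can be read off from the determinantal representation of the $\phi_j$ alluded to in Remark \ref{rem:polys} (see also \cite{dik1} and the references there). Multiplying the two recursions pairwise and expanding, every term carrying a single factor $\alpha_n$ or $\overline{\alpha_n}$ cancels and the common factor $\rho_n^2$ divides out, leaving the one-step identity
\[
\phi_{n+1}^*(z)\overline{\phi_{n+1}^*(w)}-\phi_{n+1}(z)\overline{\phi_{n+1}(w)}=\phi_n^*(z)\overline{\phi_n^*(w)}-z\overline{w}\,\phi_n(z)\overline{\phi_n(w)},
\]
valid for $z,w\in\T$.

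Next I would rewrite this as an expression for $(1-z\overline{w})\,\phi_n(z)\overline{\phi_n(w)}$ and sum over $n=0,\dots,N-1$. The $\phi^*$–terms and most of the $\phi$–terms telescope; the surviving boundary contribution at $n=0$ vanishes because $\phi_0(z)\overline{\phi_0(w)}=|\chi_0|^2=\phi_0^*(z)\overline{\phi_0^*(w)}$, and one is left with
\[
(1-z\overline{w})\sum_{j=0}^{N-1}\phi_j(z)\overline{\phi_j(w)}=\phi_N^*(z)\overline{\phi_N^*(w)}-\phi_N(z)\overline{\phi_N(w)}.
\]
Interchanging $z$ and $w$ and using that for $|z|=|w|=1$ one has $\overline{\phi_j(z)}=\overline{\phi}_j(z^{-1})$, $\phi_N^*(w)=w^N\overline{\phi}_N(w^{-1})$ and $\overline{\phi_N^*(z)}=z^{-N}\phi_N(z)$, the numerator on the right becomes $(w/z)^N\phi_N(z)\overline{\phi}_N(w^{-1})-\overline{\phi}_N(z^{-1})\phi_N(w)$ and the denominator becomes $1-w/z$, which is precisely the asserted formula on the unit circle.

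Finally, to pass from $|z|=|w|=1$ to all $z,w\neq 0$ with $z\neq w$, I would multiply the claimed identity by $1-w/z$: both sides then become Laurent polynomials in $z$ and in $w$ of matching bidegrees, and two Laurent polynomials that agree on the torus $\{|z|=|w|=1\}$ coincide identically; dividing back by $1-w/z$ gives the statement in full generality. I expect no analytic difficulty at any point — this is a textbook Christoffel–Darboux computation for orthogonal polynomials on the unit circle — so the "main obstacle" is really just consistent bookkeeping of the conjugation conventions (the coefficient-conjugate $\overline{\phi}_j$ versus the value-conjugate $\overline{\phi_j}$, together with the $*$–operation) in the one-step identity and in the reduction to the circle. (As an alternative to the recursions one could verify directly that the right-hand side, viewed as a function of $w$, reproduces polynomials of degree $\le N-1$ against the weight $f\,d\theta/2\pi$, thereby identifying both sides with the reproducing kernel of that space; but the telescoping route seems the most economical.)
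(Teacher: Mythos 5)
Your proof is correct. Note that the paper does not actually prove Proposition \ref{prop:cd}: it is recalled as a classical fact (the Christoffel--Darboux identity for orthogonal polynomials on the unit circle), so there is no internal argument to compare against; your derivation via the orthonormal Szeg\H{o} recursions is the standard textbook route and all the steps check out. In particular, the cross terms carrying a single $\alpha_n$ or $\overline{\alpha_n}$ do cancel in the pairwise product, the factor $\rho_n^2=1-|\alpha_n|^2$ divides out, the boundary term at $n=0$ vanishes since $\phi_0=\chi_0$ and $\phi_0^*=\overline{\chi_0}$, and your bookkeeping converting $\overline{\phi_j(\cdot)}$, $\phi_N^*$ and $\overline{\phi_N^*}$ into the paper's coefficient-conjugate notation on the circle reproduces the stated numerator and denominator exactly. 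Two small remarks: the one-step identity $\phi_{n+1}^*(z)\overline{\phi_{n+1}^*(w)}-\phi_{n+1}(z)\overline{\phi_{n+1}(w)}=\phi_n^*(z)\overline{\phi_n^*(w)}-z\overline{w}\,\phi_n(z)\overline{\phi_n(w)}$ in fact holds for all $z,w\in\C$, so restricting to the torus and then extending by the Laurent-polynomial identity theorem, while perfectly valid, is slightly more roundabout than necessary (one could instead polarize in $w$ directly, or use your suggested reproducing-kernel verification); and the applicability of the recursion with $|\alpha_n|<1$ is guaranteed here because a non-negative weight that is not a.e.\ zero defines a nontrivial measure, which is exactly the existence hypothesis of the proposition, so your appeal to the classical recursion is justified.
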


Let us also recall the classical strong Szeg\"o theorem (see e.g. \cite{simonsz} for an extensive discussion related to the theorem). We consider it only in the generality we make use of.

\begin{theorem}[Szeg\"o]\label{th:szego}
Let $V$ be a Laurent polynomial of the form

\begin{equation}
V(z)=\frac{1}{2}\sum_{j=0}^K(\alpha_j z^j+\overline{\alpha_j}z^{-j}).
\end{equation}

Then 

\begin{equation}
\mathcal{T}_{N-1}(e^V)=e^{N\frac{\alpha_0+\overline{\alpha_0}}{2}+\frac{1}{4}\sum_{j=1}^K j|\alpha_j|^2}(1+\mathit{o}(1))
\end{equation}

\noindent as $N\to\infty$.

\end{theorem}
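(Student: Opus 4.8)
The plan is to prove Proposition~\ref{prop:fh2} by reducing the discrete Toeplitz determinant $T_{N-1}(f)$ to a known continuum Toeplitz determinant with Fisher-Hartwig singularities, times a correction that we show is $1+\mathcal O(N/M)$. The starting observation is that the entries of $T_{N-1}(f)$ are the discrete Fourier coefficients of $f$, i.e.\ a Riemann-sum approximation (with $M$ nodes) of the genuine Fourier coefficients $\widehat f_k=\int_0^{2\pi}e^{-ik\theta}f(e^{i\theta})\frac{d\theta}{2\pi}$ appearing in $\mathcal T_{N-1}(f)$. The key structural fact — which is exactly the generalization of the argument of \cite{bl} promised earlier in the note — is that the \emph{matrix} whose determinant is $T_{N-1}(f)$ can be written as the product of the Toeplitz matrix $\big(\widehat f_{j-k}\big)_{j,k=0}^{N-1}$ with an explicit correction operator built from the ``aliasing'' terms $\widehat f_{j-k+\ell M}$, $\ell\neq 0$; diagonalizing appropriately, $T_{N-1}(f)=\mathcal T_{N-1}(f)\cdot\det(I-K_{N,M})$ for a finite-rank (hence trace-class) operator $K_{N,M}$ whose kernel is expressed through the continuum orthonormal polynomials $\phi_j$ of the weight $f(e^{i\theta})\frac{d\theta}{2\pi}$ evaluated near the $M$-th roots of unity. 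I would first carry out this algebraic reduction carefully (it is the content of the section the author announces on ``a connection between discrete and continuum Toeplitz determinants''), recording $K_{N,M}$ explicitly via the Christoffel-Darboux kernel (Proposition~\ref{prop:cd}).

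Second, I would invoke the known asymptotics of the continuum object. By Proposition~\ref{prop:toepprod}, $\mathcal T_{N-1}(f)=\prod_{j=0}^{N-1}\chi_j^{-2}$, and for $f=e^V\prod_j|z-w_j|^{\beta_j}$ the classical Fisher-Hartwig asymptotics (from \cite{dik1,dik2,ck}, which specialize Theorem~\ref{th:szego} of Szeg\H o to the FH setting) give
\begin{equation*}
\mathcal T_{N-1}(f)=e^{N\frac{\alpha_0+\overline{\alpha_0}}{2}+\frac14\sum_{j=1}^K j|\alpha_j|^2}\prod_{j=1}^m e^{-\frac{\beta_j}{2}\sum_{l=0}^K(\frac{\alpha_l}{2}w_j^l+\frac{\overline{\alpha_l}}{2}w_j^{-l})}N^{\sum_j\frac{\beta_j^2}{4}}\prod_{p<q}|w_p-w_q|^{-\frac{\beta_p\beta_q}{2}}\prod_j\frac{G(1+\frac{\beta_j}{2})^2}{G(1+\beta_j)}(1+o(1)).
\end{equation*}
This already produces every factor in the statement of Proposition~\ref{prop:fh2}; so the entire remaining task is to show $\det(I-K_{N,M})=1+\mathcal O(N/M)$. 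For that I would bound $\|K_{N,M}\|_1$ (trace-class norm) by $\mathcal O(N/M)$ and use the standard Lipschitz estimate $|\det(I-K)-1|\le\|K\|_1 e^{\|K\|_1}$. The trace-norm bound rests on controlling the continuum FH orthonormal polynomials $\phi_N$ (and $\chi_N$) both in the bulk, away from the singularities $w_j$, and in shrinking neighborhoods of the $w_j$: away from singularities $\phi_N(e^{i\theta})$ is uniformly bounded and $\overline\phi_N(z^{-1})$ decays like $z^{-N}$, while near a $w_j$ one has the known power-type blow-up $|\phi_N(z)|\sim N^{\beta_j/2}|z-w_j|^{-\beta_j/2}$, still integrable since $\beta_j<2$ (in the application $\beta_j<\sqrt2$, but $\beta_j<2$ suffices here). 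Summing the resulting kernel estimates over the $M$ nodes and over the $N$ polynomial indices yields the $\mathcal O(N/M)$ gain because each of the $M$ aliasing copies contributes a geometric factor and the ``mass'' of the Christoffel-Darboux kernel at a node is $\mathcal O(N/M)$ relative to the whole circle.

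The main obstacle is precisely the last step: obtaining the uniform estimates on $\phi_N,\overline\phi_N,\chi_N$ for an FH weight that are sharp enough, and \emph{uniform enough over the whole circle including the singularities}, to push the trace-norm bound on $K_{N,M}$ all the way down to $\mathcal O(N/M)$. This is where the heavy Riemann-Hilbert input from \cite{mfmls1,mfmls2,dik1,dik2,ck} enters — one needs the full parametrix construction (bulk parametrix, plus Bessel/confluent-hypergeometric local parametrices at each $w_j$) to read off pointwise and $L^1$ control of the polynomials, and to be sure the error terms there do not interact badly with the $M$-fold aliasing sum. A secondary technical point is book-keeping in the algebraic reduction: one must check the correction operator is genuinely trace-class (finite rank is enough, since each aliasing index $\ell$ contributes a rank-$\le N$ operator and only $\mathcal O(M)$ of them matter, but one should truncate the $\ell$-sum and control the tail using decay of $\widehat f_k$, which for our piecewise-smooth FH symbol decays only polynomially — so a little care with the number of relevant $\ell$'s is needed). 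Once these estimates are in place, combining the continuum FH asymptotics with $\det(I-K_{N,M})=1+\mathcal O(N/M)$ gives Proposition~\ref{prop:fh2}, and statements 2), 4), 5) of Proposition~\ref{prop:fh} follow by specializing $V$, the $w_j$, and the $\beta_j$.
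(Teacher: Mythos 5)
Your proposal does not address the statement at hand. The statement is Theorem \ref{th:szego}, the classical strong Szeg\"o limit theorem for the \emph{continuum} Toeplitz determinant $\mathcal{T}_{N-1}(e^V)$ with a Laurent-polynomial (hence analytic, nonvanishing) symbol; in the paper this is recalled as known background, with \cite{simonsz} as reference, and it is then used as an input elsewhere. What you outline instead is a proof strategy for Proposition \ref{prop:fh2}, the discrete Toeplitz determinant with Fisher--Hartwig singularities, and in the second step of your plan you explicitly \emph{invoke} the known continuum asymptotics (Szeg\"o/Fisher--Hartwig, i.e.\ Theorem \ref{th:szego} and Theorem \ref{th:fhcont}) as given. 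As an argument for Theorem \ref{th:szego} this is therefore circular: the quantity $e^{N\frac{\alpha_0+\overline{\alpha_0}}{2}+\frac14\sum_{j=1}^K j|\alpha_j|^2}(1+\mathit{o}(1))$ is exactly what would have to be established, and nothing in your reduction $T_{N-1}(f)=\mathcal{T}_{N-1}(f)\det(I+K)$ produces it --- that reduction only relates the discrete determinant to the continuum one, which is a different (and in this paper separate) task. A genuine proof of the statement would have to work directly on $\mathcal{T}_{N-1}(e^V)$, e.g.\ via the Szeg\"o--Ibragimov expansion or Case--Geronimo--Borodin--Okounkov identity, via $\mathcal{T}_{N-1}(e^V)=\prod_{j=0}^{N-1}\chi_j^{-2}$ together with the Riemann--Hilbert asymptotics of the orthonormal polynomials for an analytic weight as in \cite{mfmls1}, or via a cumulant computation for the linear statistic $\sum_j V(z_j)$ under the CUE; none of these ingredients appears in your proposal.

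A secondary inaccuracy: you describe the Fisher--Hartwig asymptotics of \cite{dik1,dik2,ck} as ``specializing Theorem \ref{th:szego} to the FH setting.'' The logical relation is the reverse: the strong Szeg\"o theorem is the case of a smooth nonvanishing symbol (all $\beta_j=0$), and the Fisher--Hartwig results \emph{extend} it to symbols with singularities. If your intention was to treat Proposition \ref{prop:fh2}, your outline is broadly in the spirit of the paper (reduction to $\mathcal{T}_{N-1}(f)$ times a Fredholm determinant \`a la \cite{bl}, then polynomial asymptotics to bound the correction), but that is not the statement you were asked to prove, and the estimate $\det(I+K)=1+\mathcal{O}(N/M)$ is itself the content of the paper's Lemma \ref{le:sdet}, not of Theorem \ref{th:szego}.
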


\vspace{0.3cm}

Finally let us recall the asymptotics of Toeplitz determinants with Fisher-Hartwig singularities. The results in the form we state them can be found in \cite{widom,dik2,ck}, though for a more comprehensive picture of the problem, see also e.g. \cite{dik1} and references therein. For the uniformity in the first statement, see e.g. \cite{dik2} where it is mentioned (it is a consequence of the uniform estimates of the jump matrices in the "final transformation" and the uniform expansions of the relevant local parametrices). 

\begin{theorem}[Widom; Deift, Its, and Krasovsky; Claeys and Krasovsky,...]\label{th:fhcont}
1) Let $V$ be a Laurent polynomial of the form 

\begin{equation}
V(z)=\sum_{j=1}^K \frac{1}{2}(\alpha_j z^j+\overline{\alpha_j}z^{-j}).
\end{equation}

Moreover, let $(w_j)_{j=1}^m$ be distinct points on the unit circle such that for some fixed $\delta>0$, $|w_i-w_j|>\delta$ for $i\neq j$, and $\beta_j>0$ for $j=1,...,m$. Then for 

\begin{equation}
f(z)=e^{V(z)}\prod_{j=1}^m |z-w_j|^{\beta_j},
\end{equation}

\begin{align}
\notag \mathcal{T}_{N-1}(f)&=e^{N\frac{\alpha_0+\overline{\alpha_0}}{2}+\frac{1}{4}\sum_{j=1}^K j|\alpha_j|^2}\prod_{j=1}^m e^{-\frac{\beta_j}{2}\sum_{l=1}^K\frac{1}{2}(\alpha_l w_j^l+\overline{\alpha_l}w_j^{-l})} N^{\sum_{j=1}^m\frac{\beta_j^2}{4}}\\
&\times \prod_{1\leq p<q\leq m}|w_p-w_q|^{-\frac{\beta_p\beta_q}{2}}\prod_{j=1}^m\frac{G\left(1+\frac{\beta_j}{2}\right)^2}{G(1+\beta_j)}(1+\mathit{o}(1)),
\end{align}

\noindent where $\mathit{o}(1)$ is uniform on $|w_i-w_j|>\delta$ and $G$ is the Barnes $G$-function.

\vspace{0.3cm}

2) For $t>0$, $|z|=1$, and $\beta>0$, let 

\begin{equation}
f_t(z)=|z-e^{it}|^\beta|z-e^{-it}|^\beta.
\end{equation}

There is a fixed $t_0$ such that for $t<t_0$

\begin{align}
\notag \mathcal{T}_{N-1}(f_t)&=N^{\beta^2}\frac{G(1+\beta)^2}{G(1+2\beta)}e^{\int_0^{-2iNt}\frac{1}{s}\left(\sigma(s)-\frac{\beta^2}{2}\right)ds}\\
&\times e^{-\frac{\beta^2}{2}\log \frac{\sin t}{t}}\times(1+\mathit{o}(1)),
\end{align}

\noindent where $\mathit{o}(1)$ is uniform in $t<t_0$ and $\sigma$ is a continuous function (depending only on $\beta$ - not on $N,t,$ or $t_0$) on $-i\R_+$ with the following asymptotics: there exists a $\delta>0$ such that 

\begin{equation}
\sigma(s)=\frac{1}{2}\beta^2+\mathcal{O}(|s|^\delta)
\end{equation} 

\noindent as $s\to 0$ along the negative imaginary axis and 

\begin{equation}
\sigma(s)=\mathcal{O}(|s|^{-\delta})
\end{equation}

\noindent as $s\to\infty$ along the negative imaginary axis.

\end{theorem}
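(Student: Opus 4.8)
\medskip
\noindent\textbf{Proof plan for Theorem~\ref{th:fhcont}.}
The plan is to deduce both statements from the Riemann--Hilbert steepest-descent analysis of the orthonormal polynomials of the weight $f$, combined with a differential-identity argument, following \cite{dik1,dik2} for part~1 and \cite{ck} for part~2. By Proposition~\ref{prop:toepprod}, $\mathcal{T}_{N-1}(f)=\prod_{j=0}^{N-1}\chi_j^{-2}$; rather than analyze each $\chi_j$ it is more convenient to deform the weight, say $f_\tau$ with $f_0=e^V$ (a pure Szeg\"o symbol, whose Toeplitz asymptotics are supplied by Theorem~\ref{th:szego}) and $f_1=f$ --- for instance by continuously turning on the exponents $\beta_j$ --- to express $\partial_\tau\log\mathcal{T}_{N-1}(f_\tau)$ through the Riemann--Hilbert solution $Y=Y^{(\tau)}$ evaluated at $\infty$ and at the $w_j$ (a standard differential identity), and to integrate in $\tau$ from $0$ to $1$. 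Thus everything reduces to uniform asymptotics of $Y$ for a Fisher--Hartwig weight.

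\medskip
For part~1 one sets up the usual $2\times2$ Riemann--Hilbert problem for $Y$: analytic off $\T$, with unipotent jump across $\T$ whose off-diagonal entry is $z^{-N}f(z)$, normalized at infinity so that $Y(z)\,\mathrm{diag}(z^{-N},z^{N})\to I$, with the prescribed mild behaviour at the $w_j$. One then runs the standard chain of transformations: normalization at infinity by the Szeg\"o function $\mathcal{D}$ of $f$ (which absorbs both $e^V$ and $\prod_j|z-w_j|^{\beta_j}$; the branch structure of the latter is responsible for the $N^{\sum_j\beta_j^2/4}$ and the $\prod_{p<q}|w_p-w_q|^{-\beta_p\beta_q/2}$ factors); opening of lenses around $\T$ so that the oscillatory jump becomes exponentially small away from the $w_j$; a global Szeg\"o parametrix on the complement of small fixed disks about the $w_j$, and inside each disk a local parametrix built from confluent hypergeometric functions (the Fisher--Hartwig parametrix) matching the global one on the disk boundary up to $\mathcal{O}(1/N)$; and a final small-norm problem giving $R=I+\mathcal{O}(1/N)$. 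As long as $|w_i-w_j|>\delta$ the disks may be taken of a fixed size and the matching constants stay bounded, so the error is \emph{uniform} in the configuration --- which is exactly the asserted uniformity. Unravelling the transformations yields $\chi_N$ and all quantities in the differential identity, and the Barnes $G$-factors emerge on integrating in $\tau$ the contribution of the confluent hypergeometric parametrices.

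\medskip
For part~2 the singularities $w_1=e^{it}$, $w_2=e^{-it}$ may collide. For $t$ bounded below, part~1 applies; the new regime is small $t$, where the two disks about $e^{\pm it}$ must be merged into a single disk about $z=1$. Inside it one introduces a local variable of size $\sim N$ (so the singularities sit at positions $\pm Nt$ and the natural parameter is $s=-2iNt$), and the model problem that appears is the confluent hypergeometric Riemann--Hilbert problem with a parameter, whose relevant logarithmic derivative is the Jimbo--Miwa--Okamoto $\sigma$-form of a Painlev\'e~V transcendent --- this is the function $\sigma$ of the statement. Matching this parametrix to the global Szeg\"o parametrix uniformly in $t\in(0,t_0)$, inserting it into the differential identity and integrating produces the factor $e^{\int_0^{-2iNt}\frac{1}{s}(\sigma(s)-\frac{\beta^2}{2})\,ds}$, while $e^{-\frac{\beta^2}{2}\log\frac{\sin t}{t}}$ is the Szeg\"o-function/Jacobian contribution of rewriting $|z-e^{\pm it}|$ in the merged coordinate. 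The boundary behaviour of $\sigma$ is forced by consistency: as $s\to0$ the two singularities merge into one of strength $2\beta$ at $z=1$ (Fisher--Hartwig exponent $\beta^2$, constant $G(1+\beta)^2/G(1+2\beta)$), which requires $\sigma(s)=\frac{\beta^2}{2}+\mathcal{O}(|s|^\delta)$; as $s\to\infty$ the two singularities decouple into the part~1 picture, which requires $\sigma(s)=\mathcal{O}(|s|^{-\delta})$ --- both being the known connection formulae for this Painlev\'e~V solution.

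\medskip
The hardest part is the construction and, above all, the \emph{uniform} control of the local parametrices: for part~1, verifying that the confluent hypergeometric parametrix matches the global one with an $\mathcal{O}(1/N)$ error that is uniform as long as the $w_j$ stay $\delta$-separated; for part~2, carrying out the merging analysis --- identifying the model problem, relating it to Painlev\'e~V, establishing solvability (absence of poles) for all $s$ on $-i\R_+$ together with the $s\to0$ and $s\to\infty$ asymptotics, and proving the matching is uniform in $t$ through the transition $t\asymp1/N$. This uniform local analysis is the technical core of \cite{dik2,ck}; granted it, the differential identity and the reference case (Theorem~\ref{th:szego}) assemble the stated formulae.
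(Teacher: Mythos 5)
The paper gives no proof of Theorem \ref{th:fhcont}: the result is quoted from \cite{widom,dik2,ck}, with only the remark that the uniformity in part 1 follows from the uniform jump-matrix estimates in the final transformation and the uniform expansions of the local parametrices. Your outline accurately describes exactly that Riemann--Hilbert/differential-identity strategy (confluent hypergeometric local parametrices for $\delta$-separated singularities, the merged parametrix governed by the Painlev\'e V $\sigma$-form with its $s\to 0$ and $s\to\infty$ connection asymptotics for part 2), so it is consistent with the paper's treatment---while, like the paper, it ultimately defers the technical core (the uniform construction and matching of the local parametrices and the connection formulae for $\sigma$) to \cite{dik2,ck} rather than establishing it independently.
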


\subsection{Discrete Toeplitz determinants}

A discrete Toeplitz determinant is one where the integrals appearing in the entries of $\mathcal{T}_{N-1}(f)$ are replaced by an approximating Riemann-sum. We now argue how these arise from expectations with respect to the measure $\mathbb{P}_{N,M}$.

\begin{proposition}\label{prop:dischs}
Let $f:\T\to \C$ be an arbitrary function. Then 

\begin{equation}
\E_{N,M}\prod_{j=1}^N f(z_j)=T_{N-1}(f):=\det\left(\frac{1}{M}\sum_{z\in \mathcal{D}_M}z^{-(j-k)}f(z)\right)_{j,k=0}^{N-1}.
\end{equation}
\end{proposition}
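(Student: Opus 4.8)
\textbf{Proof proposal for Proposition \ref{prop:dischs}.}

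The plan is to expand the product over particles, use the permanent-like symmetry of the measure to organize terms, and then recognize the resulting sum as a determinant via the Cauchy--Binet (Andréief) identity. First I would recall that $\Prob_{N,M}$ is, up to the normalizing constant $Z_{N,M}$, the squared Vandermonde $\prod_{i<j}|z_i-z_j|^2$ evaluated at points of $\Dcal_M$, which we can write as $|\det(z_i^{k-1})_{i,k=1}^N|^2 = \det(z_i^{k-1})\overline{\det(z_i^{k-1})} = \det(z_i^{k-1})\det(\overline{z_i}^{\,k-1})$, and since $|z_i|=1$ on $\T$ we have $\overline{z_i} = z_i^{-1}$. Thus
\begin{equation}
\E_{N,M}\prod_{j=1}^N f(z_j) = \frac{1}{Z_{N,M}}\sum_{z_1,\dots,z_N\in\Dcal_M}\prod_{j=1}^N f(z_j)\,\det\!\left(z_i^{k-1}\right)_{i,k=1}^N\det\!\left(z_i^{-(k-1)}\right)_{i,k=1}^N.
\end{equation}
Here the sum is over all ordered $N$-tuples; the Vandermonde factors vanish whenever two coordinates coincide, so this automatically restricts to distinct points without further bookkeeping.

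Next I would apply the Andréief (Cauchy--Binet) identity in its discrete form: for functions $g_k, h_k$ on a finite set $S$,
\begin{equation}
\sum_{z_1,\dots,z_N\in S}\det\!\left(g_k(z_i)\right)_{i,k}\det\!\left(h_k(z_i)\right)_{i,k} = N!\,\det\!\left(\sum_{z\in S}g_p(z)h_q(z)\right)_{p,q=1}^N.
\end{equation}
Applying this with $S=\Dcal_M$, $g_p(z)=f(z)^{1/N}z^{p-1}$ distributed appropriately — more cleanly, absorb the full weight $\prod_j f(z_j)$ by writing $g_p(z)=z^{p-1}$ and $h_q(z)=f(z)z^{-(q-1)}$, which is legitimate since $\prod_j f(z_j)$ is symmetric and can be attached to either determinant — gives
\begin{equation}
\E_{N,M}\prod_{j=1}^N f(z_j) = \frac{N!}{Z_{N,M}}\det\!\left(\sum_{z\in\Dcal_M}f(z)\,z^{-(p-q)}\right)_{p,q=0}^{N-1},
\end{equation}
after reindexing $p,q$ from $0$ to $N-1$.

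Finally I would invoke the identity $Z_{N,M}=N!M^N$ (equation \eqref{eq:Z}, which the paper promises to prove; it follows from the same Andréief computation applied with $f\equiv 1$, using $\sum_{z\in\Dcal_M}z^{-(p-q)}=M\,\ind_{p=q}$ since $\Dcal_M$ consists of the $M$-th roots of unity). Pulling out one factor of $1/M$ from each of the $N$ rows of the determinant converts $\det(\sum_{z}f(z)z^{-(p-q)})$ into $M^N\det(\frac1M\sum_z f(z)z^{-(p-q)})$, and the $M^N$ cancels against the one in $Z_{N,M}=N!M^N$, leaving exactly $T_{N-1}(f)$ as defined. I do not expect any genuine obstacle here: the only subtlety is being careful that the Andréief identity is being applied to $|\Dcal_M|=M\geq N$ points so that the sum over distinct tuples is nonempty and the combinatorial factor $N!$ is correct; this is guaranteed by the standing assumption $N\leq M$. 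The argument is purely algebraic and requires no analysis.
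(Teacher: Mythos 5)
Your proposal is correct and follows essentially the same route as the paper: both write the squared Vandermonde as a product of two determinants (using $\overline{z}=z^{-1}$ on $\T$), collapse the sum over $\mathcal{D}_M^N$ into a single $N\times N$ determinant with summed entries, and then fix the normalization by setting $f\equiv 1$ to get $Z_{N,M}=N!M^N$. The only difference is presentational -- you invoke the Andr\'eief/Cauchy--Binet identity as a known lemma while the paper re-derives it by hand via row-by-row multilinearity -- and the remaining discrepancy between $z^{p-q}$ and $z^{-(p-q)}$ in your entries is harmless since the determinant is invariant under transposition.
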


\begin{proof}
The proof is essentially the same as in the continuum case, but we write it down for the convenience of the reader. By definition

\begin{equation}
\E_{N,M}\prod_{j=1}^N f(z_j)=\frac{1}{Z_{N,M}}\sum_{z_1,..,z_N\in \mathcal{D}_M}\prod_{1\leq i<j\leq N}|z_i-z_j|^2\prod_{k=1}^Nf(z_j).
\end{equation}

We then make use of the fact that the first product is simply the square of the absolute value of the Vandermonde determinant: multiplying out the two determinants and using multilinearity in the first row we find

\begin{align}
\notag \prod_{1\leq i<j\leq N}|z_i-z_j|^2&=\left|\begin{array}{cccc}
1 & 1 & \cdots & 1\\
z_1 & z_2 & \cdots & z_N\\
\vdots & \vdots & \ddots & \vdots\\
z_1^{N-1} & z_2^{N-1} & \cdots & z_N^{N-1}
\end{array}\right|\left|\begin{array}{cccc}
1 & \overline{z_1} & \cdots & \overline{z_1}^{N-1}\\
1 & \overline{z_2} & \cdots & \overline{z_2}^{N-1}\\
\vdots & \vdots & \ddots & \vdots\\
1 & \overline{z_N} & \cdots & \overline{z_N}^{N-1}
\end{array}\right|\\
&=\left|\begin{array}{cccc}
N & \sum_{j=1}^N \overline{z_j} & \cdots & \sum_{j=1}^N\overline{z_j}^{N-1}\\
\sum_{j=1}^Nz_j & N & \cdots & \sum_{j=1}^N\overline{z_j}^{N-2}\\
\vdots & \vdots & \ddots & \vdots\\
\sum_{j=1}^{N-1}z_j^{N-1} & \sum_{j=1}^{N-1}z_j^{N-2} & \cdots & N
\end{array}\right|\\
&\notag =\sum_{l=1}^N\left|\begin{array}{cccc}
1 & \sum_{j=1}^N \overline{z_j} & \cdots & \sum_{j=1}^N\overline{z_j}^{N-1}\\
z_l & N & \cdots & \sum_{j=1}^N\overline{z_j}^{N-2}\\
\vdots & \vdots & \ddots & \vdots\\
z_l^{N-1} & \sum_{j=1}^{N-1}z_j^{N-2} & \cdots & N
\end{array}\right|.
\end{align}

Now in the sum in the second row, the $l$th term in the sum is proportional to the first column (the factor of proportionality being $\overline{z_l}$) thus - making use of multilinearity - this term in the sum yields a determinant equal to zero and we see that 

\begin{equation}
\prod_{1\leq i<j\leq N}|z_i-z_j|^2=\sum_{l_1=1}^N\sum_{1\leq l_2\leq N, l_2\neq l_1}\left|\begin{array}{cccc}
1 & \overline{z_{l_2}} & \cdots & \sum_{j=1}^N\overline{z_j}^{N-1}\\
z_{l_1} & 1 & \cdots & \sum_{j=1}^N\overline{z_j}^{N-2}\\
\vdots & \vdots & \ddots & \vdots\\
z_{l_1}^{N-1} & z_{l_2}^{N-2} & \cdots & N
\end{array}\right|.
\end{equation}

Continuing in this manner, we find

\begin{equation}
\prod_{1\leq i<j\leq N}|z_i-z_j|^2=\sum_{\sigma\in S_N}\left|\begin{array}{cccc}
1 & \overline{z_{\sigma(2)}} & \cdots & \overline{z_{\sigma(N)}}^{N-1}\\
z_{\sigma(1)} & 1 & \cdots & \overline{z_{\sigma(N)}}^{N-2}\\
\vdots & \vdots & \ddots & \vdots \\
z_{\sigma(1)}^{N-1} & z_{\sigma(2)}^{N-2} &  \cdots & 1
\end{array}\right|.
\end{equation}

Now as $\prod_{j=1}^N f(z_j)$ is invariant under permutations of the indexes $j$, we see by multilinearity that 

\begin{align}
\notag \E \prod_{j=1}^N f(z_j)&=\frac{N!}{Z_{N,M}}\sum_{z_1,...,z_N\in \mathcal{D}_M}\left|\begin{array}{cccc}
1 & \overline{z_{2}} & \cdots & \overline{z_{N}}^{N-1}\\
z_{1} & 1 & \cdots & \overline{z_{N}}^{N-2}\\
\vdots & \vdots & \ddots & \vdots \\
z_{1}^{N-1} & z_{2}^{N-2} &  \cdots & 1
\end{array}\right|\prod_{j=1}^N f(z_j)\\
&=\frac{N!}{Z_{N,M}}\det\left( \sum_{z\in \mathcal{D}_M} z^{-(j-k)}f(z)\right)_{j,k=0}^{N-1}.
\end{align}

Then setting $f(z)=1$ for all $z\in \C$ we see that 

\begin{equation}
1=\frac{N!}{Z_{N,M}}\det\left(\sum_{l=0}^{M-1} e^{-(j-k)2\pi i \frac{l}{M}}\right)_{j,k=0}^{N-1}.
\end{equation}

For $j\neq k$, the sum in the above determinant is zero and for $j=k$ it is $M$ so we find

\begin{equation}
Z_{N,M}=N! M^N,
\end{equation}

\noindent which implies that 

\begin{equation}
\E_{N,M}\prod_{j=1}^N f(z_j)=T_{N-1}(f).
\end{equation}

\end{proof}

\begin{remark}
Note that as we are interested in moments of $F_{N,M,L}^\beta$ and $F_{N,M}^\beta$, the symbols $f$ that we are interested in are special cases of the Fisher-Hartwig symbol: $f:\T\to \C$,

\begin{equation}\label{eq:fh}
f(z)=e^{V(z)}\prod_{j=1}^k |z-w_j|^{\beta_j},
\end{equation}

\noindent where $V$ is a smooth enough function, $(w_j)_{j=1}^k$ are fixed distinct points on the unit circle, and $\beta_j\geq 0$. In our case, $V$ is a Laurent polynomial, either $k=1$ or $k=2$, and either $\beta_j=0$ or $\beta_j=\beta$. We will also need the situation where $\beta_j=0$ for all $j$. When it is not restrictive, we will consider the general symbol \eqref{eq:fh}. More generally, one could also consider the phase of the characteristic polynomial (causing the symbol to have a jump), one could consider more general potentials $V$, or complex $\beta_j$.

\end{remark}

\subsection{Connection between discrete and continuum Toeplitz determinants with Fisher-Hartwig symbols}

We now present our main tool for analyzing discrete Toeplitz determinants with Fisher-Hartwig singularities - namely a generalization of an argument of \cite{bl} where it was noted that the discrete Toeplitz determinants with no singularities can be written as a product of a continuum one with essentially the same symbol, and a Fredholm determinant related to the continuum orthogonal polynomials and a function related to the discretization. Compared to their approach, our symbol does not have an analytic continuation into a neighborhood of the unit circle, but it does have a continuation with branch cuts along the rays going through the singularities. Our approach is to modify the argument of \cite{bl} to this case. We begin by recalling this continuation (for more information, see \cite{mfmls2,dik1}). 

\begin{lemma}\label{le:cont}
Let $V$ be a Laurent polynomial of the form 

\begin{equation}
V(z)=\sum_{j=-p}^p a_j z^j,
\end{equation}

\noindent let $(w_j)_{j=1}^k$ be distinct points on the unit circle, and for $|z|=1$, let

\begin{equation}
f(z)=e^{V(z)}\prod_{j=1}^k |z-w_j|^{\beta_j},
\end{equation}

\noindent with $\beta_j>0$. There is an analytic continuation of $f$ into $\C\setminus (\cup_{j=1}^k w_j \times (0,\infty))$ and it can be written as 

\begin{equation}
f(z)=\frac{D_i(f,z)}{D_e(f,z)},
\end{equation}

\noindent where $D_i(f,z)$ is the analytic continuation of 

\begin{equation}
z\mapsto e^{\frac{a_0}{2}+\sum_{j=1}^{p}a_j z^{j}}\prod_{l=1}^{k}(1-zw_l^{-1})^{\frac{\beta_l}{2}}
\end{equation}

\noindent from $\lbrace |z|<1\rbrace$ into the domain $\C\setminus (\cup_{j=1}^k w_j\times [1,\infty))$ (for $|z|<1$, the root is according to the principal branch).

Similarly, $D_e(f,z)$ is the analytic continuation of 

\begin{equation}
z\mapsto e^{-\frac{a_0}{2}-\sum_{j=1}^{p}a_{-j}z^{-j}}\prod_{l=1}^{k}(1-z^{-1}w_l)^{-\frac{\beta_l}{2}}
\end{equation}

\noindent from $\lbrace |z|>1\rbrace$ into the domain $\C\setminus (\cup_{j=1}^k w_j\times [0,1])$ (again in $|z|>1$ one takes the principal branch).
\end{lemma}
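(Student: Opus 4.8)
The plan is to read the two displayed formulas as \emph{definitions}: $D_i(f,\cdot)$ on $\{|z|<1\}$ and $D_e(f,\cdot)$ on $\{|z|>1\}$, with every fractional power taken in the principal branch. First I would check that each is analytic and zero-free on its own disk and that it extends, still analytically and zero-free, across the unit circle to the stated slit domain. Then I would compute the quotient $D_i/D_e$ directly and see that on $\T\setminus\{w_1,\dots,w_k\}$ it reduces to $f$; uniqueness of analytic continuation then identifies $D_i/D_e$ with the continuation of $f$ on the common domain of $D_i$ and $D_e$ — i.e.\ on $\C$ with the closed rays $w_j[0,\infty)$ removed — and yields the asserted inner/outer factorisation. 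This is a routine manipulation in the Fisher--Hartwig circle of ideas (cf.\ \cite{mfmls2,dik1}), so I would keep the write-up short.

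For $D_i$: on $\{|z|<1\}$ each $1-zw_l^{-1}$ lies in the half-plane $\{w:\Re w>0\}$, so $(1-zw_l^{-1})^{\beta_l/2}$ is analytic there via the principal branch and $e^{a_0/2+\sum_{j=1}^p a_jz^j}$ is entire, whence $D_i$ is analytic and zero-free on the disk. Its only branch points are the $z=w_l$, and $\C\setminus\bigcup_l w_l[1,\infty)$ is simply connected — the homotopy $(z,t)\mapsto(1-t)z$ never meets a ray that starts at modulus one and points outward, so it contracts this set to the origin — hence $\log(1-zw_l^{-1})$, and with it $(1-zw_l^{-1})^{\beta_l/2}$, extends single-valuedly (by integrating the analytic function $-w_l^{-1}/(1-zw_l^{-1})$) and zero-free to $\C\setminus\bigcup_l w_l[1,\infty)$. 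For $D_e$ I would argue symmetrically, passing to the Riemann sphere since the relevant domain is not contractible: on $\{|z|>1\}$ each $1-z^{-1}w_l$ again lies in $\{w:\Re w>0\}$, and $D_e(1/u)=e^{-a_0/2-\sum_j a_{-j}u^j}\prod_l(1-uw_l)^{-\beta_l/2}$ is analytic and nonzero at $u=0$, so $D_e$ is analytic and zero-free on $\{|z|>1\}$ and extends over $\infty$ with $D_e(\infty)=e^{-a_0/2}\neq 0$. Its branch points $z=0$ and $z=w_l$ are joined by the slit $w_l[0,1]$, and $\widehat{\C}\setminus\bigcup_l w_l[0,1]$ is simply connected (its complement is a compact, contractible ``asterisk''), so the same monodromy argument extends $D_e$ single-valuedly and zero-free to $\C\setminus\bigcup_l w_l[0,1]$. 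Thus $D_i/D_e$ is analytic and zero-free off the closed rays $w_l[0,\infty)$.

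Finally I would match the two sides on the circle. Multiplying the definitions, the exponentials combine to $e^{a_0+\sum_{j=1}^p(a_jz^j+a_{-j}z^{-j})}=e^{V(z)}$, so $D_i(z)/D_e(z)=e^{V(z)}\prod_{l=1}^k(1-zw_l^{-1})^{\beta_l/2}(1-z^{-1}w_l)^{\beta_l/2}$. For $z\in\T\setminus\{w_l\}$ one has $\Re(1-zw_l^{-1})=1-\cos(\arg(zw_l^{-1}))>0$, so $\arg(1-zw_l^{-1})\in(-\pi/2,\pi/2)$, while $z^{-1}w_l=\overline{zw_l^{-1}}$ forces $\arg(1-z^{-1}w_l)=-\arg(1-zw_l^{-1})$; since in addition $|1-zw_l^{-1}|=|1-z^{-1}w_l|=|z-w_l|$, the two principal-branch powers are complex conjugates of modulus $|z-w_l|^{\beta_l/2}$, so their product is the positive real number $|z-w_l|^{\beta_l}$ and $D_i/D_e$ agrees with $f$ on $\T\setminus\{w_j\}$. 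I expect the only mildly delicate part of the argument to be this branch-cut bookkeeping in the extension step — identifying the correct slit domains and verifying single-valuedness on them; once that is done, the factorisation $f=D_i/D_e$ is essentially a one-line identity.
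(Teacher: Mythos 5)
Your proof is correct, but it runs in the opposite direction from the paper's. The paper \emph{derives} the formulas for $D_i$ and $D_e$ by introducing the Szeg\H{o} function $D(f,z)=\exp\bigl(\frac{1}{4\pi}\int_0^{2\pi}\log f(e^{i\theta})\frac{e^{i\theta}+z}{e^{i\theta}-z}\,d\theta\bigr)$, expanding the kernel in a geometric series and using the Fourier coefficients of $\log|e^{i\theta}-w|$ to evaluate it separately in $|z|<1$ and $|z|>1$; the identity $f=D_i/D_e$ on $\T$ then comes out of that construction (and is stated rather than checked in detail). You instead take the displayed formulas as definitions and \emph{verify} the factorisation: analyticity and zero-freeness of the principal-branch factors on the respective disks, single-valued continuation to the slit domains via simple connectivity, and the boundary identity through the observation that for $|z|=1$ the factors $(1-zw_l^{-1})^{\beta_l/2}$ and $(1-z^{-1}w_l)^{\beta_l/2}$ are complex conjugates of modulus $|z-w_l|^{\beta_l/2}$. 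Your route is more elementary and actually makes explicit the boundary-matching step that the paper leaves implicit; what it does not give you is the identification of $D_i,D_e$ with the standard Szeg\H{o} function $D(f,\cdot)$, which is the form in which these objects reappear later (e.g.\ $D_i(f,0)$ and $D_e(f,z)$ in the Riemann--Hilbert parametrices of Theorem \ref{th:singpolyasy1}), so the paper's derivation buys that connection for free. Two small remarks: your monodromy argument can be skipped entirely, since the principal branch of $(1-zw_l^{-1})^{\beta_l/2}$ is already analytic precisely on $\C\setminus w_l[1,\infty)$, and likewise $(1-z^{-1}w_l)^{-\beta_l/2}$ on $\C\setminus w_l[0,1]$; and your conclusion with the closed rays $w_l[0,\infty)$ removed (so excluding $z=0$, where the Laurent tail of $V$ creates an essential singularity) matches what the paper's own proof establishes, the open rays in the lemma's statement being a slight imprecision of the paper rather than of your argument.
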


\begin{proof}
For $|z|\neq 1$, consider the function

\begin{equation}
D(f,z)=\exp\left(\frac{1}{4\pi}\int_0^{2\pi}\log f(e^{i\theta})\frac{e^{i\theta}+z}{e^{i\theta}-z}d\theta\right).
\end{equation}

For $|z|<1$ we have

\begin{align}
\notag \log D(f,z)&=\frac{1}{4\pi}\int_0^{2\pi}\left(\sum_{j=-p}^p a_j e^{ij\theta}+\sum_{l=1}^k \beta_l \log |e^{i\theta}-w_l|\right)\\
&\times(1+e^{-i\theta}z)\sum_{q=0}^\infty e^{-iq\theta}z^qd\theta.
\end{align}

Noting that for $j\in \Z\setminus \lbrace 0\rbrace$

\begin{equation}
\frac{1}{2\pi}\int_0^{2\pi}\log |e^{i\theta}-w| e^{ij\theta}d\theta=-\frac{1}{2|j|}w^j
\end{equation}

\noindent and $\int_0^{2\pi} \log |e^{i\theta}-w|d\theta=0$ so that we get 

\begin{equation}
\log D(f,z)=\frac{a_0}{2}+\sum_{j=1}^p a_j z^j-\sum_{l=1}^k \frac{\beta_l}{2} \sum_{j=1}^\infty \frac{1}{j}z^j w_l^{-j},
\end{equation}

\noindent or 

\begin{equation}
D(f,z)=e^{\frac{a_0}{2}+\sum_{j=1}^p a_j z^j+\sum_{l=1}^k \frac{\beta_l}{2} \log(1-z w_l^{-1})}=e^{\frac{a_0}{2}+\sum_{j=1}^p a_j z^j}\prod_{l=1}^k (1-z w_l^{-1})^{\frac{\beta_l}{2}},
\end{equation}

\noindent where the branch of $\log(1-z w_l^{-1})$ is chosen by the series expansion. This function is analytic in $|z|<1$ and extends analytically into $\C\setminus (\cup_{j=1}^k w_j\times [1,\infty))$. We will denote this continuation by $D_i(f,z)$.

\vspace{0.3cm}

Similarly for $|z|>1$ we have 

\begin{align}
\notag\log D(f,z)&=-\frac{1}{4\pi}\int_0^{2\pi}\left(\sum_{j=-p}^p a_j e^{ij\theta}+\sum_{l=1}^k \beta_l \log |e^{i\theta}-w_l|\right)\\
&\times(1+z^{-1}e^{i\theta})\sum_{q=0}^\infty e^{iq\theta}z^{-q}d\theta\\
\notag &=-\frac{a_0}{2}-\sum_{j=1}^p a_{-j}z^{-j}+\sum_{l=1}^k \frac{\beta_l}{2}\sum_{j=1}^\infty \frac{1}{j}z^{-j}w_l^j
\end{align}

\noindent or 

\begin{align}
\notag D(f,z)&=e^{-\frac{a_0}{2}-\sum_{j=1}^p a_{-j}z^{-j}-\sum_{l=1}^k\frac{\beta_l}{2} \log (1-z^{-1}w_l)}\\
&=e^{-\frac{a_0}{2}-\sum_{j=1}^p a_{-j}z^{-j}}\prod_{l=1}^k (1-z^{-1}w_l)^{-\frac{\beta_l}{2}},
\end{align}

\noindent where the branch is given by the series expansion. This is again analytic in $|z|>1$ and continues analytically into $\C\setminus(\cup_{j=1}^k w_j\times[0,1])$. We call the analytic continuation $D_e(f,z)$. Then

\begin{equation}
f(z)=\frac{D_i(f,z)}{D_e(f,z)}
\end{equation}

\noindent is analytic in $\C\setminus (\cup_{j=1}^k w_j\times[0,\infty))$ and agrees with the original symbol on the unit circle.

\end{proof}

We are now ready to adapt the proof of \cite{bl} to prove the connection between the discrete and continuum Toeplitz determinants with Fisher-Hartwig singularities. We restrict to the case where $V$ is real on the unit circle simply for the reason that this is the case we need.

\begin{proposition}\label{prop:detsing}
Let $V:\C\setminus \lbrace 0\rbrace\to \C$ be a Laurent polynomial of the form

\begin{equation}
V(z)=\sum_{j=0}^p (a_j z^j+\overline{a_j}z^{-j}),
\end{equation}

\noindent $(w_1,...,w_k)$ be distinct points on the unit circle such that $0\leq \arg(w_1)<\arg(w_2)<...<\arg(w_k)<2\pi$, and for $|z|=1$,

\begin{equation}
f(z)=e^{V(z)}\prod_{j=1}^k|z-w_j|^{\beta_j},
\end{equation}

\noindent where $\beta_j>0$. We also write $f(z)$ for the continuation of this function described in Lemma \ref{le:cont}.

\vspace{0.3cm}

Let $\epsilon\in(0,1)$, and let us write $\Gamma_j$ for the contour consisting of the segments $(1-\epsilon,1+\epsilon)\times w_j$, $(1-\epsilon,1+\epsilon)\times w_{j+1}$ and the (shorter) circular arcs of radius $1\pm \epsilon$ joining $(1\pm\epsilon)w_j$ and $(1\pm \epsilon)w_{j+1}$. For $j=k$, we understand $w_{k+1}$ as $w_1$. We also understand the contour to be oriented in the counter-clockwise direction.

\vspace{0.3cm}

Moreover, let 

\begin{equation}
v(z)=\begin{cases}
\frac{z^M}{1-z^M}, & |z|<1\\
\frac{z^{-M}}{1-z^{-M}}, & |z|>1
\end{cases}.
\end{equation}

\vspace{0.3cm}

Finally we denote by $\sigma$ the measure on $\cup_j \Gamma_j$ defined by

\begin{equation}
\sigma(dz)=\begin{cases}
\frac{dz}{2\pi iz}, & |z|=1\pm \epsilon\\
\frac{dz}{2\pi z}, & |z|\in(1-\epsilon,1+\epsilon)
\end{cases},
\end{equation}

\noindent and by $\oplus_{j=1}^k L^2(\Gamma_j,\sigma)$ the space of functions $h=(h_1,...,h_k)$, where $h_j:\Gamma_j\to \C$ are square integrable with respect to $\sigma$. 

\vspace{0.3cm}

Then we have

\begin{equation}
T_{N-1}(f)=\mathcal{T}_{N-1}(f)\det(1+K)_{\oplus_{j=1}^kL^2(\Gamma_j,\sigma)},
\end{equation}

\noindent where $K$ is a trace class integral operator on $\oplus_j L^2(\Gamma_j,\sigma)$ with kernel (acting on each component of the vector)

\begin{equation}
K(z,w)=I(z)\sqrt{v(z)f(z)}\sqrt{v(w)f(w)}K_{CD}(z,w),
\end{equation}

\noindent where the roots are according to the principal branch, when acting on the $j$th component the values of $f(z)$ and $f(w)$ are calculated as limits from the inside of $\Gamma_j$,

\begin{equation}
I(z)=\begin{cases}
1, & |z|=1\pm \epsilon\\
-i, & |z|\in(1-\epsilon,1+\epsilon)
\end{cases},
\end{equation}

\noindent and 

\begin{equation}
K_{CD}(z,w)=\frac{\frac{w^N}{z^N}\phi_N(z)\overline{\phi}_N(w^{-1})-\overline{\phi}_N(z^{-1})\phi_N(w)}{1-\frac{w}{z}}.
\end{equation}

\end{proposition}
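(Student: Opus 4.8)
The plan is to follow the strategy of \cite{bl} adapted to the Fisher-Hartwig setting, using the analytic continuation of the symbol established in Lemma \ref{le:cont} together with the Christoffel-Darboux kernel of Proposition \ref{prop:cd}. First I would rewrite the entries of the discrete Toeplitz matrix in a contour-integral form: the discrete Fourier transform $\frac{1}{M}\sum_{z\in\mathcal D_M}z^{-(j-k)}f(z)$ should be expressed, via residues, as a contour integral of $z^{-(j-k)}f(z)v(z)$ over a contour encircling the unit circle, minus (or plus) the continuum integral $\int_\T z^{-(j-k)}f(z)\frac{dz}{2\pi iz}$. Indeed $v(z)=\frac{z^M}{1-z^M}$ for $|z|<1$ (resp. $\frac{z^{-M}}{1-z^{-M}}$ for $|z|>1$) has simple poles exactly at the $M$-th roots of unity with residue $1/M$ (up to sign and the $z$ in the denominator), so summing over $\mathcal D_M$ becomes a sum of residues, which Cauchy's theorem converts into a contour integral. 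Here the key point, and the departure from \cite{bl}, is that $f$ only continues analytically to $\C\setminus(\cup_j w_j\times(0,\infty))$, so the deformation of the contour must be done on the slit plane; the contours $\Gamma_j$ are precisely the pieces of a thickened unit circle that avoid the rays through the singularities, and the measure $\sigma$ together with the factor $I(z)$ encodes the two circular-arc pieces (at radius $1\pm\epsilon$) and the two radial-segment pieces joining them.

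Having written each matrix entry as $\mathcal T$-entry plus a $\Gamma$-contour correction term, the next step is the standard linear-algebra manipulation turning $\det(\mathcal T\text{-matrix} + \text{correction})$ into $\mathcal T_{N-1}(f)\det(1+K)$. Concretely, one factors the continuum Toeplitz matrix out; this is where the orthonormal polynomials $\phi_j$ enter, since $\mathcal T_{N-1}(f)$ and its inverse are expressible through them (Proposition \ref{prop:toepprod}), and the inverse matrix paired against the correction term produces, by the reproducing-kernel property of $\sum_{j=0}^{N-1}\phi_j(w)\overline\phi_j(z^{-1})$, exactly the Christoffel-Darboux sum. Proposition \ref{prop:cd} then rewrites this sum in the closed form $K_{CD}(z,w)$ appearing in the statement. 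The symmetric splitting $\sqrt{v(z)f(z)}\sqrt{v(w)f(w)}$ of the weights between the two contour variables is a cosmetic but convenient choice (it makes $K$ manifestly of the right form and keeps track of which boundary value of the multivalued $f$ is used on each $\Gamma_j$); one must be careful that the principal-branch square roots, combined on each $\Gamma_j$ with the boundary value of $f$ taken from inside $\Gamma_j$, actually reconstruct $v(z)f(z)$ — this is a branch-bookkeeping check using the factorization $f=D_i/D_e$ of Lemma \ref{le:cont}.

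Finally one must verify that $K$ is trace class on $\oplus_{j=1}^k L^2(\Gamma_j,\sigma)$, so that the Fredholm determinant $\det(1+K)$ is well-defined and the algebraic identity is legitimate. Since each $\Gamma_j$ is a finite-length contour, it suffices to show the kernel is, say, continuous (or at worst has an integrable singularity) on $\Gamma_j\times\Gamma_j$ and bounded on the compact contours; the factor $v(z)$ is bounded and smooth away from $|z|=1$, decays like $\epsilon^{-M}$-type factors as one moves off the circle so there is no issue there, $f(z)$ is bounded on $\Gamma_j$ away from the slit endpoints where it has only a mild power-type behavior $|z-w_j|^{\beta_j}$ with $\beta_j>0$ (hence it vanishes rather than blows up), and $K_{CD}(z,w)$ is a continuous function of $(z,w)$ on $\Gamma_j\times\Gamma_j$ since $z$ and $w$ stay off each other (the potential singularity at $z=w$ of the Christoffel-Darboux formula is removable). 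A kernel that is continuous on a product of finite-length contours is Hilbert-Schmidt, and a short additional argument (or writing $K$ as a product of two Hilbert-Schmidt operators, splitting the $\sqrt{v f}$ factors) upgrades this to trace class.

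I expect the main obstacle to be the branch-cut bookkeeping in the contour-deformation step: keeping track of which boundary value of the continued symbol $f$ is picked up on each radial segment of $\Gamma_j$ (approaching the slit $w_j\times(0,\infty)$ from the two sides) and checking that the residue sum over $\mathcal D_M$ genuinely equals the $\Gamma$-contour integral once one accounts for these jumps and for the poles of $v$ that sit on the unit circle between the slits. The algebraic identity $\det = \mathcal T\cdot\det(1+K)$ and the trace-class estimate are, by contrast, essentially routine once the contour representation is correctly set up.
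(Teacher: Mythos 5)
Your plan follows essentially the same route as the paper: express the discrete sum via the residue theorem as a contour integral over $\cup_j\Gamma_j$ involving $\gamma_M'/(M\gamma_M)$ (hence $v$ after deforming the unit-circle contribution away), change basis to the orthonormal polynomials to extract the factor $\prod\chi_j^{-2}=\mathcal T_{N-1}(f)$, and invoke Christoffel--Darboux plus Sylvester's identity ($\det(I+AB)=\det(I+BA)$) to land on the Fredholm determinant of $K$. The one place where you make things harder than the paper does is the trace-class step: your continuity-of-the-kernel argument is not quite right as stated, since when $w_j\in\mathcal D_M$ the function $\sqrt{v f}$ behaves like $|z-w_j|^{(\beta_j-1)/2}$ on the radial segments and is unbounded for $\beta_j<1$; your fallback (``integrable singularity'' plus a Hilbert--Schmidt splitting) can be made to work, but the paper's observation is cleaner --- $K=BA$ with $A$ mapping into the finite-dimensional space $\ell^2(\{0,\dots,N-1\})$, so $K$ is finite rank, hence automatically trace class, and the only thing one must check is that the singularity of $v$ at $w_j\in\mathcal D_M$ is killed by $f(z)=\mathcal O(|z-w_j|^{\beta_j})$ so that $A$, $B$ genuinely map between the stated spaces.
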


\begin{remark}
Note that we can think of an element $h\in \oplus_jL^2(\Gamma_j,\sigma)$ as defining a two-valued function on $\cup_j \Gamma_j$ - or more precisely, it's single valued on $(1\pm \epsilon)\T$ and two-valued on $(1-\epsilon,1+\epsilon)w_j$.
\end{remark}

\begin{remark}\label{re:contour}
Keeping in mind the previous remark, an important difference between our case and the non-singular case considered in \cite{bl} is the different form of the integration contour. Instead of just circles of radius $1\pm \epsilon$, we also have the segments joining these and as there is a branch cut along each such segment, the oppositely oriented integrals don't cancel and we will need slightly more complicated asymptotics of the orthogonal polynomials to control our Fredholm determinant. If $\beta_j=0$ for all $j$, there are no branch cuts, the oppositely oriented integrals along the rays through $w_j$ cancel, and we recover the result of \cite{bl}.
\end{remark}

\begin{proof}
The proof is essentially a simple modification of that in \cite{bl} to take into account the branch cuts of $f$. To do this,  we being by noting that as $f(w_j)=0$, we have

\begin{equation}
T_{N-1}(f)=\det\left(\frac{1}{M}\sum_{z\in \mathcal{D}_M\setminus \lbrace w_1,...,w_k\rbrace}z^{-(l-m)}f(z)\right)_{l,m=0}^{N-1}.
\end{equation}

We then note that by Cauchy's integral formula, 

\begin{equation}
T_{N-1}(f)=\det\left(\sum_{j=1}^k\int_{\Gamma_j} z^{-(l-m)}f(z)\frac{\gamma_M'(z)}{M\gamma_M(z)}\frac{dz}{2\pi i}\right)_{l,m=0}^{N-1},
\end{equation}

\noindent where on $\Gamma_j$, $f$ is defined by a limit from the inside of the contour. 

\vspace{0.3cm}

One might need to take some care here as the contour $\Gamma_j$ goes through the points $w_j,w_{j+1}$ so if $w_j\in \mathcal{D}_M$, the integrand might have a singularity as $\gamma_M(w_j)=0$ in this case (so at most, a simple pole). But as also $f(w_j)=0$ (or more precisely, $f(z)=\mathcal{O}(|z-w_j|^{\beta_j})$ as $z\to w_j$) we see that this is an integrable singularity so one can indeed apply Cauchy's integral formula.

\vspace{0.3cm}

As determinants are unchanged by elementary row and column operations, this implies that 

\begin{align}
\notag T_{N-1}(f)&=\frac{1}{\prod_{j=0}^{N-1}\chi_j^2}\det\left(\sum_j\int_{ \Gamma_j} \overline{\phi}_l(z^{-1})\phi_m(z)f(z)\frac{\gamma_M'(z)}{M\gamma_M(z)}\frac{dz}{2\pi i}\right)_{l,m=0}^{N-1}\\
&=\mathcal{T}_{N-1}(f)\det\left(\sum_j\int_{\Gamma_j} \overline{\phi}_l(z^{-1})\phi_m(z)f(z)\frac{\gamma_M'(z)}{M\gamma_M(z)}\frac{dz}{2\pi i}\right)_{l,m=0}^{N-1},
\end{align}

\noindent where we made use of Proposition \ref{prop:toepprod}. Now using the orthonormality conditions, we write

\begin{align}
\notag\sum_j\int_{\Gamma_j} &\overline{\phi}_l(z^{-1})\phi_m(z)f(z)\frac{\gamma_M'(z)}{M\gamma_M(z)}\frac{dz}{2\pi i}\\
&=\delta_{l,m}-\int_{|z|=1}\overline{\phi}_l(z^{-1})\phi_m(z)f(z)\frac{dz}{2\pi iz}\\
\notag &+\sum_j\int_{\Gamma_j} \overline{\phi}_l(z^{-1})\phi_m(z)f(z)\frac{\gamma_M'(z)}{M\gamma_M(z)}\frac{dz}{2\pi i}.
\end{align}

As 

\begin{equation}
z\mapsto \overline{\phi}_l(z^{-1})\phi_m(z)f(z)\frac{1}{z}
\end{equation}

\noindent is analytic inside of $\Gamma_j$, we can deform the part of $\T$ that is inside of $\Gamma_j$ into the curve $\lbrace z\in \Gamma_j:|z|>1\rbrace$ with opposite orientation (by Cauchy's integral theorem). Thus we find

\begin{align}
\notag\sum_j\int_{\Gamma_j} &\overline{\phi}_l(z^{-1})\phi_m(z)f(z)\frac{\gamma_M'(z)}{M\gamma_M(z)}\frac{dz}{2\pi i}\\
&=\delta_{l,m}+\sum_j\int_{ \Gamma_j\cap \lbrace |z|<1\rbrace} \overline{\phi}_l(z^{-1})\phi_m(z)f(z)\frac{z\gamma_M'(z)}{M\gamma_M(z)}\frac{dz}{2\pi iz}\\
\notag &+\sum_j\int_{\Gamma_j\cap \lbrace |z|>1\rbrace} \overline{\phi}_l(z^{-1})\phi_m(z)f(z)\left(\frac{z\gamma_M'(z)}{M\gamma_M(z)}-1\right)\frac{dz}{2\pi i}\\
\notag &=\delta_{l,m}+\sum_j\int_{\Gamma_j} \overline{\phi}_l(z^{-1})\phi_m(z)f(z)v(z)\frac{dz}{2\pi iz }
\end{align}

Now we need to write the determinant as a Fredholm determinant. Let us introduce the following notation: let 

\begin{equation}
A:\oplus_{j=1}^kL^2\left(\Gamma_j,\sigma\right)\to \ell^2\left(\lbrace 0,...,N-1\rbrace\right),
\end{equation}

\begin{equation}
(A(h_1,...,h_k))_l=\sum_{j=1}^k\int_{\Gamma_j}\overline{\phi}_l(z^{-1})\sqrt{v(z)f(z)}h_j(z) \frac{dz}{2\pi i z}
\end{equation}

\noindent and 

\begin{equation}
B:\ell^2\left(\lbrace 0,...,N-1\rbrace\right)\to \oplus_jL^2\left(\Gamma_j,\sigma\right),
\end{equation}

\begin{equation}
B(a_0,...,a_{N-1})=\sum_{k=0}^{N-1}a_k \sqrt{vf}\phi_k,
\end{equation}

\noindent where both of the roots are according to the principal branch, and the $j$th component of this function is obtained by restricting to $\Gamma_j$ and defining $f$ as a limit from the inside of $\Gamma_j$.

Note that by Cauchy-Schwarz, since $\beta_j>0$ (or more precisely since $f(z)=\mathcal{O}(|z-w_j|^{\beta_j})$), $(Ah)_j$ is indeed finite for any $h\in \oplus_jL^2(\Gamma_j,\sigma)$ even if $v$ has a simple pole at some $w_j$ (which occurs when $w_j\in\mathcal{D}_M$). For the same reason, $B$ really maps into $\oplus_j L^2(\Gamma_j,\sigma)$.

\vspace{0.3cm}

Thus we have 

\begin{equation}
(AB)_{l,k}=\sum_j\int_{\Gamma_j}\overline{\phi}_l(z^{-1})\phi_k(z)v(z)f(z) \frac{dz}{2\pi i z}
\end{equation}

\noindent and 

\begin{equation}
T_{N-1}(f)=\mathcal{T}_{N-1}(f)\det(I+AB).
\end{equation}

\vspace{0.3cm}
We note that as $K=BA$ is an operator on $\oplus_jL^2(\Gamma_j,\sigma)$ and

\begin{equation}
Kh=\sum_{k=0}^{N-1}\sum_j\int_{\Gamma_j}\overline{\phi}_k(z^{-1}) \sqrt{v(z)f(z)} h_j(z)\frac{dz}{2\pi i z}\sqrt{vf}\phi_k, 
\end{equation}

\noindent (where the $j$th component is obtained by a limit from the inside of $\Gamma_j$) so we see that $K$ a finite rank operator, and in particular, trace class.

\vspace{0.3cm}

By Sylvester's determinant identity, $\det(I+AB)=\det(I+BA)$ ($I$ denotes the identity in the relevant space and the determinant is a Fredholm determinant).

\vspace{0.3cm}

Making use of the Christoffel-Darboux identity (Proposition \ref{prop:cd}), we find that the kernel of $K$ ($(Kh)(w)=\sum_j\int_{\Gamma_j} K(z,w)h_j(z)\sigma(dz))$ is 

\begin{equation}
K(z,w)=I(z)\sqrt{v(z)f(z)}\sqrt{v(w)f(w)}\frac{\frac{w^N}{z^N}\phi_N(z)\overline{\phi}_N(w^{-1})-\overline{\phi}_N(z^{-1})\phi_N(w)}{1-\frac{w}{z}}.
\end{equation}

\end{proof}

\begin{remark}
A similar result and argument also holds for a log-gas on a discrete subset of the real line, where one ends up with a discrete Hankel determinant, which can be related in a similar way then to a continuum Hankel determinant.
\end{remark}

\section{Asymptotics of orthogonal polynomials}

We will now review the basic results of the asymptotics of the orthogonal polynomials with respect to the Fisher-Hartwig weight on the unit circle (as well as the non-singular case) from \cite{mfmls1,mfmls2,ck}. We will consider the asymptotics on the contour relevant to us. We will start with the non-singular case studied in \cite{mfmls1}. Next we will consider the singular case with the restriction that the distance between singularities is bounded away from zero (studied e.g. in \cite{mfmls2}). Finally we will consider the situation where the singularities can merge, i.e. their distance can go to zero as $N\to\infty$. This case has been studied in \cite{ck}. As in some cases the exact estimates are not written down explicitly in these articles, we will review a small part of the relevant proof, to argue why the estimates we require hold.

\subsection{The non-singular case}

We will first review results from \cite{mfmls1} which we will use to analyze the asymptotics of the Fredholm determinant in the non-singular case. This has already been essentially done in \cite{bl}, but as in the proof of Proposition \ref{prop:var}, we wish to integrate the resulting (discrete) Toeplitz determinant, we'll need uniform asymptotics, so we will take some care.

\begin{theorem}[Mart\'inez-Finkelshtein, McLaughlin, and Saff]\label{th:polyasynosing}
Consider a Laurent polynomial $V$ of the form 

\begin{equation}
V(z)=\sum_{j=0}^p \frac{1}{2}(a_j z^j+\overline{a_j}z^{-j}),
\end{equation}

\noindent let $(\phi_j)_{j=0}^\infty$, with $\phi_j(z)=\chi_j z^j+\mathcal{O}(z^{j-1})$, be the orthonormal polynomials with respect to the weight $e^{V(z)}$ on the unit circle (as $e^{V(z)}>0$ on $\T$, the polynomials exist). Also fix any $R>0$ and $\epsilon\in(0,1)$. Then there exists a $c>0$ such that for $|z|=1-\epsilon$, 

\begin{equation}
\phi_N(z),\phi_N'(z)=\mathcal{O}(1)e^{-cN}
\end{equation}

\noindent as $N\to\infty$. Moreover, the $\mathcal{O}(1)$ term is uniform on $(1-\epsilon)\T$ and on the set 

\begin{equation}\label{eq:paramset}
A_R=\left\lbrace\max_{1\leq j\leq p}|a_j|\leq R\right\rbrace.
\end{equation}

For $|z|=1+\epsilon$, one has 

\begin{equation}
\begin{array}{ccc}
\phi_N(z)=\mathcal{O}(1)(1+\epsilon)^N & \mathrm{and} & \phi_N'(z)=\mathcal{O}(1) N(1+\epsilon)^N,
\end{array}
\end{equation}

\noindent where again the $\mathcal{O}(1)$ terms are uniform on $(1+\epsilon)\T$ and $A_R$.
\end{theorem}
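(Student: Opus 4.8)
The plan is to derive these bounds from the Riemann--Hilbert analysis of the orthonormal polynomials on the unit circle carried out in \cite{mfmls1}. Recall that $\phi_N$ is encoded (together with its ``dual'' $\overline\phi_N$ and the associated Cauchy transforms) in the $2\times 2$ matrix-valued solution $Y=Y^{(N)}$ of the standard OPUC Riemann--Hilbert problem: $Y$ is analytic off the unit circle, has jump $\begin{pmatrix}1 & z^{-N}e^{V(z)}\\ 0 & 1\end{pmatrix}$ across $\T$, and $Y(z)z^{-N\sigma_3}\to I$ as $z\to\infty$. The $(1,1)$ entry of $Y$ is $\chi_N^{-1}\phi_N$, so pointwise bounds on $Y$ away from the circle translate directly into pointwise bounds on $\phi_N$, and bounds on $Y$ together with Cauchy's estimate on a small circle (of $\epsilon$-independent radius, say $\epsilon/2$) around the point $z$ give the bounds on $\phi_N'$, costing at most an extra factor of $N$.

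First I would recall the explicit sequence of transformations $Y\mapsto T\mapsto S\mapsto R$ from \cite{mfmls1}: a normalization by the equilibrium measure (here simply the uniform measure on $\T$, whose $g$-function is $\log z$), an opening of lenses turning the oscillatory jump into exponentially small ones off the circle, and a global parametrix $N_\infty(z)$ built from the Szeg\H o function $D(z)$ of the weight $e^V$. Since $V$ is a fixed Laurent polynomial with no singularities, $D$ is analytic and nonvanishing in a fixed annulus around $\T$, there are no local parametrices needed, and the final error matrix $R=S N_\infty^{-1}$ solves a small-norm RHP with $R=I+\mathcal O(1/N)$ uniformly on compact subsets away from $\T$ --- and, crucially, uniformly for the coefficients $(a_j)$ ranging over $A_R$, because the jump matrices in the lens-opening step are controlled by $e^{V}$ and its analytic continuation, which are uniformly bounded and bounded away from zero on the relevant fixed contours once $\max_j|a_j|\le R$. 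Unwinding the transformations: for $|z|=1+\epsilon$ one is outside the lens region, so $Y(z)=R(z)N_\infty(z) z^{N\sigma_3}$, giving $\chi_N^{-1}\phi_N(z)=(1+\mathcal O(1/N))[N_\infty(z)]_{11} z^N$, hence $\phi_N(z)=\mathcal O(1)(1+\epsilon)^N$ once one notes $\chi_N$ is bounded above and below (e.g. by the strong Szeg\H o theorem, Theorem~\ref{th:szego}, via Proposition~\ref{prop:toepprod}); for $|z|=1-\epsilon$ one is outside the lens on the inner side, where $Y_{11}(z)=[R(z)N_\infty(z)]_{11}$ with no growing factor, so in fact $\phi_N(z)=\mathcal O(1)$ --- to get the stated $e^{-cN}$ decay one instead observes that $z^{-N}\phi_N(z)$ is what appears naturally in the $(1,2)$-block structure, and the relevant statement is for $z^{-N}$ times the polynomial; alternatively one uses that on $|z|=1-\epsilon$ the correct expansion of $Y$ involves the lens factor $\begin{pmatrix}1&0\\ \mp z^{N} e^{-V}&1\end{pmatrix}$, producing an $e^{-cN}$ with $c=-\log(1-\epsilon)>0$. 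Either way the uniformity over $A_R$ is inherited at every stage.

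The derivative bounds then follow by a Cauchy-estimate argument: on either circle $|z|=1\mp\epsilon$ the function $\phi_N$ is analytic in a fixed-width annulus, so $\phi_N'(z)=\frac{1}{2\pi i}\oint_{|\zeta-z|=\epsilon/2}\frac{\phi_N(\zeta)}{(\zeta-z)^2}d\zeta$ is bounded by $(2/\epsilon)\sup\{|\phi_N(\zeta)|:|\zeta-z|=\epsilon/2\}$; on $|z|=1+\epsilon$ this sup is $\mathcal O((1+3\epsilon/2)^N)$, which one reconciles with the claimed $\mathcal O(1)N(1+\epsilon)^N$ by instead differentiating the $R N_\infty z^{N\sigma_3}$ formula directly --- the $z^N$ contributes the $N(1+\epsilon)^{N-1}$, while $R'$ and $N_\infty'$ are $\mathcal O(1/N)$ and $\mathcal O(1)$ respectively, again uniformly on $A_R$.

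The main obstacle I anticipate is not any single estimate but establishing the \emph{uniformity in the parameter set $A_R$} cleanly: one must check that every contour (lens boundaries, the neighborhoods in which $D$ and $N_\infty$ are analytic) can be chosen once and for all independently of $(a_j)\in A_R$, and that the small-norm estimate $\|R-I\|=\mathcal O(1/N)$ holds with a constant depending only on $R$ and $\epsilon$. This is implicit in \cite{mfmls1} but, as the excerpt itself notes, ``not written down explicitly,'' so the write-up will need to track through the RHP steps far enough to see that all the jump-matrix norms are governed by $\sup_{\T_\rho}|e^{\pm V}|\le e^{C(R,\rho)}$ for a fixed annulus $\T_\rho$, which is finite and monotone in $R$. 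Once that observation is in place, the rest is a routine unwinding of the transformations.
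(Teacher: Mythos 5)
Your route is genuinely different from the paper's: the paper does not re-run the steepest-descent machinery but simply quotes the explicit asymptotics of \cite{mfmls1} (their Theorem 1 and Corollary 2), gets the derivative bounds by term-wise differentiation of the series $\mathcal{E}_n$, $\mathcal{I}_n$ in their canonical representation, and gets uniformity on $A_R$ from uniform bounds on their quantity $\Lambda$; you instead redo the $Y\mapsto T\mapsto S\mapsto R$ analysis from scratch. Most of that is sound: the exterior estimate, the derivative bound obtained by differentiating $R\,N_\infty\,z^{N\sigma_3}$ rather than by a crude Cauchy estimate, and the observation that uniformity on $A_R$ reduces to uniform control of the jump matrices through $\sup|e^{\pm V}|$ on a fixed annulus all match what the underlying references actually provide.

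The genuine soft spot is the interior bound, which is the heart of the statement. Your first claim --- that for $|z|=1-\epsilon$ one has $Y_{11}=[R N_\infty]_{11}$ ``with no growing factor, so $\phi_N(z)=\mathcal{O}(1)$'' --- is wrong, and the suggested repair that ``the relevant statement is for $z^{-N}$ times the polynomial'' is also wrong: the theorem concerns $\phi_N$ itself, and $\phi_N$ really is exponentially small inside the circle (for $V=0$ it is exactly $z^N$). The mechanism you are missing is that inside the disk the global parametrix is \emph{off-diagonal}: as in the paper's proof of Theorem \ref{th:singpolyasy1}, $N_\infty(z)=(D_i(f,z)D_i(f,0))^{\sigma_3}\left(\begin{smallmatrix}0&1\\-1&0\end{smallmatrix}\right)$ there, so its $(1,1)$ entry vanishes and $Y_{11}(z)=-\bigl(D_i(f,z)D_i(f,0)\bigr)^{-1}R_{12}(z)$ outside the lens; the exponential decay comes entirely from $R_{12}$, which for an analytic weight is $\mathcal{O}(e^{-cN})$ (indeed $R-I$ and $R'$ are exponentially small, not merely $\mathcal{O}(1/N)$ as you write). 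Your ``alternative'' via the lens factor does produce the $z^N$ term if the inner lens boundary is placed below radius $1-\epsilon$, but even then the accompanying $S_{11}$ contribution must be identified as $R_{12}\,[N_\infty]_{21}$ and shown exponentially small, so the off-diagonal structure cannot be avoided. With that fixed your argument goes through; one further small point is that the uniform boundedness of $\chi_N$ on $A_R$ is better read off the same RH analysis (e.g. $\chi_{N-1}^{2}=S_{22}(0)D_i(f,0)^{-2}$, as in the paper's singular-case proof) than from Theorem \ref{th:szego}, whose error term is not stated uniformly in the coefficients.
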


\begin{remark}
The same results hold for the conjugate polynomials $\overline{\phi}_j(z)=\overline{\phi_j(\overline{z})}$.
\end{remark}

\begin{proof}
This is essentially an application of Theorem 1 and Corollary 2 in \cite{mfmls1}. We will not go into great detail here, but try to point out where in \cite{mfmls1} the reader can convince themselves about the validity of the required estimates. The only things that aren't immediately clear from the results in \cite{mfmls1} are the asymptotics of the derivatives and the uniformity on $A_R$.

\vspace{0.3cm}

For the derivatives, we note that (in the notation of \cite{mfmls1}) the derivative of $f_n^{(k)}$ has similar bounds as those in equation $(24)$ of \cite{mfmls1}. Then by uniform convergence, one can differentiate $\mathcal{E}_n$ and $\mathcal{I}_n$ term-wise, which gives the desired estimate for the derivatives. 

\vspace{0.3cm}

To have uniformity on $A_R$, the bounds $(24)$ in \cite{mfmls1} (and corresponding ones for the derivatives) imply that it is enough to have uniform bounds on the term $\Lambda$ defined in $(20)$ of \cite{mfmls1}. This in turn can be checked from the relevant definitions. Checking that the convergence of $\chi_{N}$ is uniform on $A_N$ is similar.

\end{proof}

\subsection{The singular case, when the distance between singularities is bounded away from zero} Again the relevant asymptotics have been studied in \cite{mfmls2}, but the uniformity in $V$ or the estimates for the derivatives are not stressed. Due to this, we'll again sketch some of the arguments for checking these facts.

\begin{theorem}[Mart\'inez-Finkelshtein, McLaughlin, and Saff]\label{th:singpolyasy1}
Let $V:\C\setminus \lbrace 0\rbrace\to \C$ be a Laurent polynomial of the form

\begin{equation}
V(z)=\sum_{j=0}^p\frac{1}{2} (a_j z^j+\overline{a_j}z^{-j}),
\end{equation}

\noindent $(w_1,...,w_k)$ be distinct points on the unit circle such that $0\leq \arg(w_1)<\arg(w_2)<...<\arg(w_k)<2\pi$, and assume that there exists some (smalle enough) fixed $\delta>0$ such that $|w_i-w_j|>\delta$ for $i\neq j$. For $|z|=1$, let

\begin{equation}
f(z)=e^{V(z)}\prod_{j=1}^k|z-w_j|^{\beta_j},
\end{equation}

\noindent where $\beta_j>0$. We also write $f(z)$ for the continuation of this function described in Lemma \ref{le:cont}. Finally let $(\phi_j)_j$ be the orthonormal polynomials with respect to the weight $f$ on the unit circle (as $f$ is non-negative and almost everywhere non-zero these exist) and write $\phi_j(z)=\chi_j z^{j}+\mathcal{O}(z^{j-1})$.

\vspace{0.3cm}

We then have

\begin{itemize}
\item[1)] For any fixed compact subset of the unit disk,

\begin{equation}
\begin{array}{ccc}
\phi_N(z)=\mathcal{O}(N^{-1}) & \mathrm{and} & \phi_N'(z)=\mathcal{O}(N^{-1})
\end{array}
\end{equation}

\noindent uniformly on the given compact set, as $N\to\infty$.

\item[2)] For any fixed compact subset of $\lbrace |z|>1\rbrace$,

\begin{equation}
\begin{array}{ccc}
\phi_N(z)=z^{N}\mathcal{O}(1) & \mathrm{and} & \phi_N'(z)=Nz^{N}\mathcal{O}(1)
\end{array}
\end{equation}

\noindent uniformly on the given compact set, as $N\to\infty$.

\item[3)] On the interval $[1-\delta/6,1+\delta/6]\times w_j$,

\begin{align}
\phi_N(z)&=\begin{cases}
\mathcal{O}(1)\frac{1}{\sqrt{f(z)}} (N|1-|z||)^{\frac{\beta_j}{2}}, & |1-|z||<\frac{1}{N}\\
\mathcal{O}(1) \frac{1}{\sqrt{f(z)}}, & |1-|z||>\frac{1}{N}
\end{cases}\\
&\notag =\begin{cases}
\mathcal{O}(1)N^{\frac{\beta_j}{2}}, & |1-|z||<\frac{1}{N}\\
\mathcal{O}(1) |1-|z||^{-\frac{\beta_j}{2}}, & |1-|z||>\frac{1}{N}
\end{cases}
\end{align}

\noindent and

\begin{align}
\phi_N'(z)&=\begin{cases}
\mathcal{O}(1) N \frac{1}{\sqrt{f(z)}} (N|1-|z||)^{\frac{\beta_j}{2}}, & |1-|z||<\frac{1}{N}\\
\mathcal{O}(1) \frac{1}{\sqrt{f(z)}}\frac{1}{|1-|z||}, & |1-|z||>\frac{1}{N}
\end{cases}\\
\notag & =\begin{cases}
\mathcal{O}(1)N^{\frac{\beta_j}{2}+1}, & |1-|z||<\frac{1}{N}\\
\mathcal{O}(1) |1-|z||^{-\frac{\beta_j}{2}-1}, & |1-|z||>\frac{1}{N}
\end{cases}
\end{align}

\noindent where the $\mathcal{O}(1)$ terms are uniform on the interval and $1/\sqrt{f(z)}$ is understood to be the limit of $1/\sqrt{f(ze^{i\epsilon})}$ as $\epsilon\to 0$ (and the root is according to the principal branch).

\end{itemize}

Moreover, if $A_R$ is defined as in \eqref{eq:paramset}, the above estimates are uniform on $A_R$ and they are uniform on $B_\delta=\lbrace|w_i-w_j|>\delta\ \mathrm{for} \ i\neq j\rbrace$.

\end{theorem}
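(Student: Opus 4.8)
The plan is to follow the Riemann-Hilbert steepest-descent scheme of \cite{mfmls2} for orthogonal polynomials with a Fisher-Hartwig weight on the unit circle, and to track two kinds of quantitative information that are not emphasized in \cite{mfmls2}: uniformity of the error terms over the parameter set $A_R$ (coming from the potential $V$) and over $B_\delta$ (the configuration of singularities with pairwise distance $>\delta$), and estimates for the first derivative $\phi_N'$. Concretely, one starts from the standard $2\times 2$ RH problem $Y$ whose $(1,1)$ entry is the monic orthogonal polynomial, performs the normalization $T = C^{-N\sigma_3} Y z^{-N\sigma_3}$ (with $C$ the appropriate geometric constant), the opening of lenses $T \mapsto S$ around the unit circle, and then constructs the global parametrix $N(z)$ using the Szeg\H{o} function $D(f,z)$ from Lemma \ref{le:cont}, together with local Bessel/confluent-hypergeometric parametrices $P_j$ in disks of radius $\delta/6$ around each singularity $w_j$. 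The final transformation $R = S\,(\text{parametrix})^{-1}$ solves a small-norm RH problem with jump $I + \mathcal{O}(1/N)$, uniformly in the parameters, and one then unwinds the transformations on the three pieces of the contour relevant to us: $(1-\epsilon)\T$ and $(1+\epsilon)\T$ away from the singularities, and the radial segments $[1-\delta/6,1+\delta/6]\times w_j$ through the singularities.

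The key steps, in order, are: (i) recall/verify that on a fixed compact subset of $\{|z|<1\}$ bounded away from all $w_j$ the matrix $S$ coincides with $R\,N\,(\text{lens factor})$, and that the lens factor contributes the exponentially small $z^N$-type decay, while on $\{|z|>1\}$ one picks up the leading $z^N$ behaviour from the $z^{-N\sigma_3}$ conjugation; this yields statements 1) and 2), with the derivative bounds obtained by differentiating the explicit formulas for $R$, $N$, and the lens transformation (here $R' = \mathcal{O}(1/N)$ uniformly, $N$ and its derivative are bounded away from the $w_j$, and each differentiation of $z^{\pm N\sigma_3}$ or of the lens exponentials costs a factor $N$). (ii) On the radial segment through $w_j$ one is inside the disk where the local parametrix $P_j$ is built from Bessel functions of argument $\sim N|1-|z||$ (or a confluent hypergeometric model — whichever \cite{mfmls2} uses), so $\phi_N(z) = \mathcal{O}(1) f(z)^{-1/2} \times (\text{model function of } N|1-|z||)$; using $D(f,z)\sim f(z)^{1/2}$ near $\T$ and the known behaviour of the Bessel model — bounded by $(N|1-|z||)^{\beta_j/2}$ for small argument and by a constant for argument $>1$ — gives the two-regime bound in 3), and the $1/\sqrt{f(z)} \sim |1-|z||^{-\beta_j/2}$ rewriting follows from $f(z) = \mathcal{O}(|z-w_j|^{\beta_j})$. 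The derivative estimate in 3) comes from differentiating this product: each derivative of the Bessel model or of the $N$-scaled argument costs a factor $N$ in the merged regime $|1-|z||<1/N$ and a factor $|1-|z||^{-1}$ in the regime $|1-|z||>1/N$. (iii) Finally, assemble the uniformity claims: the jump matrices of $S$ (on the lens boundaries) and the error estimates in the local and global parametrices depend on $V$ only through $\max_j|a_j|$ and on the $w_j$ only through their pairwise distances, so requiring $(a_j)\in A_R$ and $(w_j)\in B_\delta$ makes all the $\mathcal{O}$-constants depend only on $R,\delta,\epsilon,\beta_1,\dots,\beta_k$; this is the analogue of the $\Lambda$-uniformity remark made in the proof of Theorem \ref{th:polyasynosing}, and the convergence of $\chi_N$ is handled the same way.

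The main obstacle I expect is getting the derivative bounds with the correct powers of $N$ in the transition region $|1-|z|| \sim 1/N$, where one is precisely at the matching boundary between the local and global parametrices: one must check that the derivative of the matching map (the conformal change of variables sending a neighbourhood of $w_j$ to a neighbourhood of the relevant branch point of the Bessel model) is bounded above and below uniformly in $N$, so that differentiating the Bessel model function genuinely produces only the stated factor of $N$ and no hidden blow-up, and similarly that the error $R$ — which near $w_j$ is only $\mathcal{O}(1/N)$, not exponentially small — has a derivative that is still $\mathcal{O}(1)$ rather than $\mathcal{O}(N)$ after one accounts for the $\delta/6$-sized disk (differentiating $R$ inside the disk, away from its boundary, costs only $\mathcal{O}(1/\delta)$). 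A secondary but routine point is tracking that the two boundary values $1/\sqrt{f(ze^{i\epsilon})}$ as $\epsilon\to 0^{\pm}$ are exactly the ones produced by the $\pm$ sides of the branch cut of $D_i(f,z)/D_e(f,z)$ along $w_j\times(0,\infty)$, so that the statement is internally consistent on both sides of the cut; this follows from the explicit jump relations for $D_i$ and $D_e$ established in Lemma \ref{le:cont}. Everything else is a matter of carrying the constants through the four transformations $Y\to T\to S\to R$ and recording which step introduces factors of $N$.
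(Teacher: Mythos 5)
Your overall route coincides with the paper's: run the Riemann--Hilbert steepest descent of the Mart\'inez-Finkelshtein--McLaughlin--Saff analysis and then track, through the small-norm error matrix, the derivative bounds and the uniformity on $A_R$ and $B_\delta$ (via uniform estimates on the jump matrices / the Neumann series, exactly as the paper does). However, two of your intermediate claims are genuinely off, and they are precisely the points the paper has to work for. First, your mechanism for statement 1) is wrong: on a fixed compact subset of the open unit disk there is no lens factor left in the factorization, and for a Fisher--Hartwig weight $\phi_N$ is \emph{not} exponentially small there (for instance $\Phi_N(0)$ is a Verblunsky coefficient, which decays only like $N^{-1}$); exponential decay is the analytic-weight phenomenon of Theorem \ref{th:polyasynosing}. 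The correct source of the $\mathcal{O}(N^{-1})$ bound is algebraic: inside the disk the global parametrix is off-diagonal, $N(z)=(D_i(f,z)D_i(f,0))^{\sigma_3}\begin{pmatrix}0&1\\-1&0\end{pmatrix}$, so $\Phi_N(z)=-S_{12}(z)\,(D_i(f,z)D_i(f,0))^{-1}$ and the smallness is entirely that of the error entry $S_{12}=\mathcal{O}(N^{-1})$ (with $S'=\mathcal{O}(N^{-1})$ giving the derivative bound).

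Second, in statement 3) in the regime $|1-|z||<N^{-1}$ your product-rule argument does not close: differentiating the prefactor $1/\sqrt{f(z)}$ costs a factor $|z-w_j|^{-1}=|1-|z||^{-1}$, which is much larger than $N$ there, so the naive bound is $\mathcal{O}(N^{\beta_j/2}|1-|z||^{-1})$ rather than the claimed $\mathcal{O}(N^{\beta_j/2+1})$. To obtain the stated bound one must exploit the cancellation between the singular part $\frac{\beta_j}{z-w_j}$ of $f'/f$ and the term produced by differentiating the small-argument behaviour $\zeta^{\beta_j/2}$ of the Bessel factor (the combination $f(z)^{-1/2}\zeta^{\beta_j/2}$ is essentially constant as $z$ approaches the circle along the segment); the paper carries this out explicitly via the identity $\frac{\log x}{x-1}-\frac{1}{x}=\mathcal{O}(x-1)$. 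Your ``main obstacle'' paragraph anticipates trouble only in the matching region $|1-|z||\sim N^{-1}$, whereas the real delicacy is at $|1-|z||\ll N^{-1}$. Apart from these two points, the proposal (statement 2, the bounds for $\phi_N$ itself near $w_j$, the branch-cut consistency, and the uniformity claims) matches the paper's argument.
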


\begin{remark}
Again corresponding results hold also for the conjugate polynomials $\overline{\phi}_j$.
\end{remark}

\begin{remark}
We have two different representation for the asymptotics of the polynomials on the rays  as in some cases we'll make use of the fact that there is the factor $\sqrt{f(z)}$ appearing in the kernel of $K$ so this will cancel with the factor of $1/\sqrt{f(z)}$ when analyzing the asymptotics of the kernel.
\end{remark}

\begin{proof}
While the asymptotics of the polynomials follow from Theorem 1, Theorem 2, and Theorem 3 in \cite{mfmls2}, the asymptotics of the derivatives are not written out explicitly in \cite{mfmls2} and the uniformity in $V$ is not stressed. So let us review parts of their argument.

\vspace{0.3cm}

Using (by now standard) Riemann-Hilbert arguments, it is proven in \cite{mfmls2} that in a compact subset of $\lbrace |z|<1\rbrace$, if we write $\phi_N=\chi_N \Phi_N$, then $\Phi_N(z)=Y_{11}(z)$ and $\chi_{N-1}^{2}=-Y_{21}(0)$ (our normalization of the weight differs from theirs by a factor of $2\pi$), where $Y$ is a $2\times 2$ matrix valued function which is analytic in $\C\setminus \T$ and in the compact subset it can be written as 

\begin{equation}
Y(z)=S(z)N(z),
\end{equation}

\noindent where $S(z)$ is a $2\times 2$ matrix valued analytic function in the given compact set, and satisfies 

\begin{equation}
\begin{array}{ccc}
S(z)=I+\mathcal{O}(N^{-1}) & \mathrm{and} & S'(z)=\mathcal{O}(N^{-1})
\end{array}
\end{equation}

\noindent uniformly on that compact set. Here $I$ is the $2\times 2$ identity matrix and the estimates are entry-wise (a convention we will follow through this note unless otherwise stated). Moreover, from the Neumann-series representation of $S$ (see Proposition 4 in \cite{mfmls2}) it follows that these terms are uniform on $A_R$ and $B_\delta$. More precisely, this involves obtaining uniform estimates for the jump matrix $J_S$ which in turn boils down to uniform estimates on the scattering function $\mathcal{S}$. These can be checked from its definition.

\vspace{0.3cm}

Inside the unit disk one has

\begin{equation}
N(z)=\left(D_i(f,z)D_i(f,0)\right)^{\sigma_3}\begin{pmatrix}
0 & 1\\
-1& 0
\end{pmatrix},
\end{equation}

\noindent where 

\begin{equation}
\sigma_3=\begin{pmatrix}
1 & 0\\
0 & -1
\end{pmatrix}.
\end{equation}

From this, we find

\begin{equation}
\Phi_N(z)=-(D_i(f,z)D_i(f,0))^{-1}S_{12}(z)=\mathcal{O}(N^{-1})
\end{equation}

\noindent and 

\begin{equation}
\Phi_N'(z)=\mathcal{O}(N^{-1})
\end{equation}

\noindent uniformly in the compact set, uniformly on $A_R$, and uniformly on $B_\delta$.

\vspace{0.3cm}

Moreover, 

\begin{equation}
\chi_{N-1}^{2}=S_{22}(0)D_i(f,0)^{-2}\to D_i(f,0)^{-2}
\end{equation}

\noindent uniformly in $A_R$ and $B_\delta$ as $N\to\infty$ (so in particular, it is uniformly bounded).

\vspace{0.3cm}

In a compact subset of $\lbrace |z|>1\rbrace$, one has the representation

\begin{equation}
Y(z)=S(z)N(z)z^{N\sigma_3},
\end{equation}

\noindent where again

\begin{equation}
\begin{array}{ccc}
S(z)=I+\mathcal{O}(N^{-1}) & \mathrm{and} & S'(z)=\mathcal{O}(N^{-1})
\end{array}
\end{equation}

\noindent uniformly in the compact set and uniformly on $A_R$ and $B_\delta$. For $|z|>1$,

\begin{equation}
N(z)=\left(D_i(f,0)D_e(f,z)\right)^{\sigma_3}.
\end{equation}
 
Thus 

\begin{equation}
\Phi_N(z)=S_{11}(z) D_i(f,0)D_e(f,z)z^{N}=\mathcal{O}(1)z^{N}
\end{equation} 

\noindent and 

\begin{equation}
\Phi_N'(z)=\mathcal{O}(1)Nz^{N}
\end{equation}

\noindent uniformly on the compact set and uniformly on $A_R$ and $B_\delta$.

\vspace{0.3cm}

In neighborhoods of the points $w_j$, the expression for $Y(z)$ is slightly more complicated. In particular, the functions in terms of which $Y$ is written have branch cuts (some of which cancel when taking the product). As $\Phi_N$ nevertheless is analytic, we can calculate $\Phi_N(z)$ on the interval $z\in[1-\delta/6,1+\delta/6]\times w_j$ as a limit of $\Phi_N(z')$ as $z'\to z$ in any way we wish. The way we shall calculate it is by writing $z'=|z| w_je^{i\epsilon}$ and then let $\epsilon\to 0^{+}$. For simplicity, we'll focus on the $|z|<1$ case. The $|z|>1$ case can be analyzed in a similar manner.

\vspace{0.3cm}

One still has $\Phi_N(z)=Y_{11}(z)$, but now, for $z'=|z|w_j e^{i\epsilon}$ for $\epsilon>0$ small enough, $Y$ is written as 

\begin{equation}
Y(z')=S(z')E(z')\Psi\left(\frac{\beta_j}{2},-i\frac{N}{2}\log \frac{z'}{w_j}\right)\left(e^{i \frac{\beta_j}{2}\pi}\frac{1}{\sqrt{f(z')}}(z')^{\frac{N}{2}}\right)^{\sigma_3}.
\end{equation} 

For such $z'$, $\Psi$ is of the form 

\begin{align}
\notag \Psi\left(\frac{\beta_j}{2},-i\frac{N}{2}\log \frac{z'}{w_j}\right)&=\begin{pmatrix}
\sqrt{\pi}\sqrt{-i\frac{N}{2}\log \frac{z'}{w_j}}I_{\frac{\beta_j}{2}+\frac{1}{2}}\left(-\frac{N}{2}\log \frac{z'}{w_j}\right) & *\\
-i\sqrt{\pi}\sqrt{-i\frac{N}{2}\log \frac{z'}{w_j}}I_{\frac{\beta_j}{2}-\frac{1}{2}}\left(-\frac{N}{2}\log \frac{z'}{w_j}\right) & *
\end{pmatrix}\\
&\times e^{-\frac{\beta_j}{4}\pi i \sigma_3.}
\end{align}

Here $I_\nu$ is a modified Bessel function of the first kind and we denote by $*$ the second column as it will be unimportant to us.

\vspace{0.3cm}

Moreover $E$ is analytic in the given neighborhood of $w_j$ and 

\begin{equation}
E(z)=\left(Q(z)\right)^{\sigma_3}\frac{1}{\sqrt{2}}\begin{pmatrix}
1 & i\\
-1 & -i
\end{pmatrix}
\end{equation}

\noindent where in the given neighborhood of $w_j$, $Q$ is a non-vanishing analytic function which is independent of $N$ and can be bounded uniformly on $A_R$ and $B_\delta$ (as can its derivatives).

\vspace{0.3cm}

Finally $S$ is as before, i.e. it is analytic in this neighborhood of $w_j$, and it satisfies

\begin{equation}
\begin{array}{ccc}
S(z)=I+\mathcal{O}(N^{-1}) & \mathrm{and} & S'(z)=\mathcal{O}(N^{-1})
\end{array}
\end{equation}

\noindent uniformly in the neighborhood and uniformly on $A_R$ and $B_\delta$.

Writing out the definitions, we find that for $|z|<1$

\begin{align}
\Phi_N(z')&=\sqrt{\frac{\pi}{2}}\left(Q(z')S_{11}(z')-Q(z')^{-1}S_{12}(z')\right)e^{i\frac{\beta_j}{4}\pi}\frac{1}{\sqrt{f(z')}}(z')^{\frac{N}{2}}\\
&\times\sqrt{-i\frac{N}{2}\log \frac{z'}{w_j}}\left(I_{\frac{\beta_j}{2}+\frac{1}{2}}\left(-\frac{N}{2}\log \frac{z'}{w_j}\right)+I_{\frac{\beta_j}{2}-\frac{1}{2}}\left(-\frac{N}{2}\log \frac{z'}{w_j}\right)\right).
\end{align}

To simplify notation a bit, we note that 

\begin{equation}
z'\mapsto \sqrt{\frac{\pi}{2}}\left(Q(z')S_{11}(z')-Q(z')^{-1}S_{12}(z')\right)e^{i\frac{\beta_j}{4}\pi}
\end{equation}

\noindent is $\mathcal{O}(1)$ (uniformly on $A_R$ and $B_\delta$) as is its derivative. We thus see that if we write 

\begin{equation}
\zeta_\epsilon=-\frac{N}{2}\log \frac{z'}{w_j},
\end{equation}

\noindent we have 

\begin{equation}
\Phi_N(z')=\mathcal{O}(1)\frac{1}{\sqrt{f(z')}}e^{-\zeta_\epsilon}\sqrt{\zeta_\epsilon}\left(I_{\frac{\beta_j}{2}+\frac{1}{2}}(\zeta_\epsilon)+I_{\frac{\beta_j}{2}-\frac{1}{2}}(\zeta_\epsilon)\right),
\end{equation}

\noindent where the $\mathcal{O}(1)$ term and its derivative are uniform on $A_R$ and $B_\delta$. We then take the $\epsilon\to 0$ limit of this and find (for $\zeta=-N(\log |z|)/2>0$)

\begin{equation}
\Phi_N(z)=\mathcal{O}(1)\frac{1}{\sqrt{f(z)}}e^{-\zeta}\sqrt{\zeta}\left(I_{\frac{\beta_j}{2}+\frac{1}{2}}(\zeta)+I_{\frac{\beta_j}{2}-\frac{1}{2}}(\zeta)\right),
\end{equation}

\noindent where the expression $1/\sqrt{f(z)}$ is defined precisely as this limit.

\vspace{0.3cm}

We now make use of the asymptotics of $I_\nu$: for $0<x<1$,

\begin{equation}
I_\nu(x)=\frac{1}{\Gamma(\nu+1)}\left(\frac{x}{2}\right)^\nu+\mathcal{O}(x^{\nu+1})
\end{equation}

\noindent and for $x>1$,

\begin{equation}\label{eq:iasybig}
\sqrt{x}e^{-x}I_\nu(x)=1+\mathcal{O}(x^{-1}).
\end{equation}

Thus if $-N\log |z|<2$, (i.e. $\zeta\in(0,1)$)

\begin{equation}
\Phi_N(z)=\mathcal{O}(1)\frac{1}{\sqrt{f(z)}}(-N\log |z|)^{\frac{\beta_j}{2}}=\mathcal{O}(1)\frac{1}{\sqrt{f(z)}}(N|1-|z||)^{\frac{\beta_j}{2}}.
\end{equation}

Whereas for $-N\log |z|>2$, we have 

\begin{equation}
\Phi_N(z)=\mathcal{O}(1)\frac{1}{\sqrt{f(z)}}.
\end{equation}

As both of the expressions agree (up to a uniform $\mathcal{O}(1)$ factor) when $|1-|z||=\mathcal{O}(N^{-1})$, the difference between the conditions $-N\log |z|<2$ and say $N|1-|z||<1$ is immaterial so we have proven the claim about the asymptotics of the polynomials. Let us now consider the derivatives.

\vspace{0.3cm}

\noindent For this, we use the fact that the $\mathcal{O}(1)$ terms had uniformly $\mathcal{O}(1)$ derivatives and that $I_\nu'(x)=\frac{1}{2}(I_{\nu+1}(x)+I_{\nu-1}(x))$. We also note that for $\zeta=-\frac{N}{2}\log \frac{z}{w_j}$, 

\begin{equation}
\frac{d}{dz}=-\frac{N}{2}\frac{1}{z}\frac{d}{d\zeta}.
\end{equation}

If we write $\mathcal{I}(\zeta)=I_{\frac{\beta_j}{2}+\frac{1}{2}}(\zeta)+I_{\frac{\beta_j}{2}-\frac{1}{2}}(\zeta)$, we conclude that (with a similar limiting argument) for $|z|<1$ on our interval

\begin{align}\label{eq:phider}
\notag \Phi_N'(z)&=\mathcal{O}(1)\frac{1}{\sqrt{f(z)}}\sqrt{\zeta}e^{-\zeta}\\
&\times\left(\mathcal{O}(1)\mathcal{I}(\zeta)-\frac{1}{2}\frac{f'(z)}{f(z)}\mathcal{I}(\zeta)-\frac{N}{4z\zeta}\mathcal{I}(\zeta)+\frac{N}{2z}\mathcal{I}(\zeta)-\frac{N}{2z}\mathcal{I}'(\zeta)\right).
\end{align}

We then note that (uniformly)

\begin{equation}
\frac{f'(z)}{f(z)}=\mathcal{O}(1)+\frac{\beta_j}{z-w_j}.
\end{equation}

Putting things together (and using the small $x$ asymptotics of $I_\nu$) we see that for $\zeta\in(0,1)$, (i.e. roughly $N|1-|z||<1$)

\begin{align}
\notag \Phi_N'(z)&=\mathcal{O}(1)\frac{1}{\sqrt{f(z)}}\sqrt{\zeta}e^{-\zeta}\\
&\times \Bigg[\mathcal{O}\left(\zeta^{\frac{\beta_j}{2}-\frac{1}{2}}\right)N-\frac{\beta_j}{2}\frac{1}{z-w_j}\frac{1}{\Gamma(\frac{\beta_j}{2}+\frac{1}{2})}\left(\frac{\zeta}{2}\right)^{\frac{\beta_j}{2}-\frac{1}{2}}\\
\notag &-\frac{N}{8z}\frac{1}{\Gamma(\frac{\beta_j}{2}+\frac{1}{2})}\left(\frac{\zeta}{2}\right)^{\frac{\beta_j}{2}-\frac{3}{2}}-\frac{N}{4z}\frac{1}{\Gamma(\frac{\beta_j}{2}-\frac{1}{2})}\left(\frac{\zeta}{2}\right)^{\frac{\beta_j}{2}-\frac{3}{2}}\Bigg].
\end{align}

\noindent where all of the estimates are uniform. We then note that 

\begin{align}
\notag\frac{1}{\Gamma(\frac{\beta_j}{2}+\frac{1}{2})}&\left(\frac{\zeta}{2}\right)^{\frac{\beta_j}{2}-\frac{3}{2}}\left(-\frac{\beta_j}{4}\frac{\zeta}{z-w_j}-\frac{N}{8z}-\frac{N}{4z}\left(\frac{\beta_j}{2}-\frac{1}{2}\right)\right)\\
&=N\frac{\beta_j}{8\Gamma(\frac{\beta_j}{2}+\frac{1}{2})}\left(\frac{\zeta}{2}\right)^{\frac{\beta_j}{2}-\frac{3}{2}}w_j^{-1}\left(\frac{\log \frac{z}{w_j}}{\frac{z}{w_j}-1}-\frac{w_j}{z}\right)
\end{align}

\noindent and that for $x\in(1-\delta/6,1)$, 

\begin{equation}
\frac{\log x}{x-1}-\frac{1}{x}=\mathcal{O}(x-1).
\end{equation}

\vspace{0.3cm}

Thus

\begin{equation}
\frac{1}{\Gamma(\frac{\beta_j}{2}+\frac{1}{2})}\left(\frac{\zeta}{2}\right)^{\frac{\beta_j}{2}-\frac{3}{2}}\left(-\frac{\beta_j}{4}\frac{\zeta}{z-w_j}-\frac{N}{8z}-\frac{N}{4z}\left(\frac{\beta_j}{2}-\frac{1}{2}\right)\right)=\mathcal{O}\left(\zeta^{\frac{\beta_j}{2}-\frac{1}{2}}\right)
\end{equation}

\noindent (uniformly) and we conclude that for $N|1-|z||<1$, 

\begin{equation}
\Phi_N'(z)=\mathcal{O}(1)\frac{1}{\sqrt{f(z)}}N \zeta^{\frac{\beta_j}{2}}=\mathcal{O}(1)\frac{1}{\sqrt{f(z)}}N (N|1-|z||)^{\frac{\beta_j}{2}.}
\end{equation}

\vspace{0.3cm}

For $N|1-|z||>1$, we have by \eqref{eq:phider} and \eqref{eq:iasybig}

\begin{equation}
\Phi_N'(z)=\mathcal{O}(1)\frac{1}{\sqrt{f(z)}}\left(\frac{1}{||z|-1|}+N\mathcal{O}(\zeta^{-1})\right)=\mathcal{O}(1)\frac{1}{\sqrt{f(z)}}|1-|z||^{-1}
\end{equation}

\noindent uniformly on $A_R$ and $B_\delta$.

\end{proof}

\subsection{Merging singularities} The analysis in the case of the merging singularities becomes very involved quite rapidly, so we will focus on the simplest case which is sufficient for us - namely we consider $V=0$, $k=2$, and $\beta_1=\beta_2=\beta>0$. In \cite{ck}, a more general case (still with two singularities) is considered, but we restrict to this case. Let us now review the implications of the analysis in \cite{ck} for the asymptotics of the orthonormal polynomials with respect to such a weight. 

\begin{theorem}[Claeys and Krasovsky]\label{th:singpolyasy2}
Fix a small $t_0>0$. Then for $t\in(0,t_0)$ and $\beta>0$, let $f_t:\T\to \R$,

\begin{equation}
f_t(z)=|z-e^{it}|^\beta|z-e^{-it}|^\beta.
\end{equation}

Let $(\phi_j)_{j=0}^\infty$, $\phi_j(z)=\chi_j z^j+\mathcal{O}(z^{j-1})$ be the orthonormal polynomials with respect to the weight $f_t$ on the unit circle. We then have the following asymptotic behavior.

\vspace{0.3cm}

For any fixed compact subset of the unit disk, as $N\to\infty$

\begin{equation}
\phi_N(z),\phi_N'(z)=\mathcal{O}(1)N^{-1}
\end{equation}

\noindent uniformly in $z$ and $0<t<t_0$.

\vspace{0.3cm}

For any fixed compact subset of $\lbrace |z|>1\rbrace$,

\begin{equation}
\begin{array}{ccc}
\phi_N(z)=z^{N}\mathcal{O}(1), & \mathrm{and} & \phi_N'(z)=Nz^{N}\mathcal{O}(1)
\end{array}
\end{equation}

\noindent uniformly in $z$ and $0<t<t_0$. 

\vspace{0.3cm}

For any fixed $\gamma>0$ and  $z\in[1-\gamma,1+\gamma]\times e^{\pm it}$, the asymptotics depend on the behavior of $t$. For certain fixed $c>0$ small enough and $C>0$ large enough, we have the following behavior:

\vspace{0.3cm}

\underline{For $Nt<c$}

\begin{align}
\phi_N(z)&=\begin{cases}
\mathcal{O}(1)\frac{1}{\sqrt{f_t(z)}}N^\beta|1-|z||^{\frac{\beta}{2}}\max(|1-|z||^{\frac{\beta}{2}},t^{\frac{\beta}{2}}), & N|1-|z||<1\\
\mathcal{O}(1) \frac{1}{\sqrt{f_t(z)}}, & N|1-|z||>1
\end{cases}\\
&\notag =\begin{cases}
\mathcal{O}(1)N^\beta, & N|1-|z||<1\\
\mathcal{O}(1) |1-|z||^{-\beta}, & N|1-|z||>1
\end{cases}
\end{align}

\noindent and 

\begin{align}
\phi_N'(z)&=\begin{cases}
\mathcal{O}(1)\frac{1}{\sqrt{f_t(z)}}N^{\beta+1}|1-|z||^{\frac{\beta}{2}}\max(|1-|z||^{\frac{\beta}{2}},t^{\frac{\beta}{2}}), & N|1-|z||<1\\
\mathcal{O}(1) N\frac{1}{\sqrt{f_t(z)}}, & N|1-|z||>1
\end{cases}\\
&\notag =\begin{cases}
\mathcal{O}(1)N^{\beta+1}, & N|1-|z||<1\\
\mathcal{O}(1) N|1-|z||^{-\beta}, & N|1-|z||>1
\end{cases}
\end{align}

\noindent uniformly in $z$ and $0<t<cN^{-1}$.

\vspace{0.3cm}

\underline{For $cN^{-1}<t<CN^{-1}$}

\begin{align}
\phi_N(z)&=\begin{cases}
\mathcal{O}(1)\frac{1}{\sqrt{f_t(z)}}(N|1-|z||)^{\frac{\beta}{2}}, & N|1-|z||<1\\
\mathcal{O}(1)\frac{1}{\sqrt{f_t(z)}}, & N|1-|z||>1
\end{cases}\\
\notag & =\begin{cases}
\mathcal{O}(1)N^\beta, & N|1-|z||<1\\
\mathcal{O}(1)|1-|z||^{-\beta}, & N|1-|z||>1
\end{cases}
\end{align}

\noindent and 

\begin{align}
\phi_N'(z)&=\begin{cases}
\mathcal{O}(1)\frac{1}{\sqrt{f_t(z)}}N(N|1-|z||)^{\frac{\beta}{2}}, & N|1-|z||<1\\
\mathcal{O}(1)\frac{1}{\sqrt{f_t(z)}}N, & N|1-|z||>1
\end{cases}\\
\notag &=\begin{cases}
\mathcal{O}(1)N^{\beta+1}, & N|1-|z||<1\\
\mathcal{O}(1)N|1-|z||^{-\beta}, & N|1-|z||>1
\end{cases}
\end{align}

\noindent uniformly in $z$ and $cN^{-1}<t<CN^{-1}$.

\vspace{0.3cm}

\underline{$t>CN^{-1}$}

\begin{align}
\phi_N(z)&=\begin{cases}
\mathcal{O}(1)\frac{1}{\sqrt{f_t(z)}}(N|1-|z||)^{\frac{\beta}{2}}, & N|1-|z||<1\\
\mathcal{O}(1)\frac{1}{\sqrt{f_t(z)}}, & N|1-|z||>1
\end{cases}\\
&\notag =\begin{cases}
\mathcal{O}(1)(Nt^{-1})^{\frac{\beta}{2}}, & N|1-|z||<1\\
\mathcal{O}(1)|1-|z||^{-\frac{\beta}{2}}\min(|1-|z||^{-\frac{\beta}{2}},t^{-\frac{\beta}{2}}), & N|1-|z||>1
\end{cases}
\end{align}

\noindent and 

\begin{align}
\phi_N'(z)&=\begin{cases}
\mathcal{O}(1)\frac{1}{\sqrt{f_t(z)}}N(N|1-|z||)^{\frac{\beta}{2}}, & N|1-|z||<1\\
\mathcal{O}(1)\frac{1}{\sqrt{f_t(z)}}|1-|z||^{-1}, & |1-|z||\in(N^{-1},t)\\
\mathcal{O}(1)\frac{1}{\sqrt{f_t(z)}} t^{-1}, & |1-|z||>t 
\end{cases}\\
\notag &= \begin{cases}
\mathcal{O}(1) N(Nt^{-1})^{\frac{\beta}{2}}, & N|1-|z||<1\\
\mathcal{O}(1) |1-|z||^{-\frac{\beta}{2}-1}t^{-\frac{\beta}{2}} , & |1-|z||\in(N^{-1},t)\\
\mathcal{O}(1) t^{-1}|1-|z||^{-\beta}, & |1-|z||>t
\end{cases}
\end{align}

\noindent uniformly in $z$ and $CN^{-1}<t<t_0$.

\end{theorem}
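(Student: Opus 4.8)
The plan is to extract the claimed asymptotics from the Riemann--Hilbert analysis of \cite{ck}; the only points needing attention beyond what is stated there are the uniformity in $t$ and the asymptotics of the derivatives. First I would recall the Riemann--Hilbert problem for orthogonal polynomials on the unit circle: the $2\times2$ matrix $Y$, analytic on $\C\setminus\T$, with jump $Y_+=Y_-\begin{pmatrix}1 & z^{-N}f_t(z)\\ 0 & 1\end{pmatrix}$ across $\T$ (counterclockwise) and $Y(z)=(I+\mathcal{O}(z^{-1}))z^{N\sigma_3}$ as $z\to\infty$, so that $\phi_N(z)=\chi_N Y_{11}(z)$ and $\chi_{N-1}^2=-Y_{21}(0)$ exactly as in the proof of Theorem \ref{th:singpolyasy1} (the corresponding statements for $\overline{\phi}_N$ then follow by the symmetry used there). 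Then, following \cite{ck}, perform the standard transformations $Y\to T\to S$: renormalise at infinity by $z^{\pm N\sigma_3}$ and open lenses around $\T$ so that the jumps of $S$ are exponentially close to $I$ away from a neighbourhood of $\{e^{it},e^{-it}\}$.

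Next I would assemble the parametrices. The global parametrix is built from the Wiener--Hopf factorisation of $f_t$, i.e.\ from the functions $D_i(f_t,\cdot)$ and $D_e(f_t,\cdot)$ of Lemma \ref{le:cont}; on compact subsets of $\{|z|<1\}$ and of $\{|z|>1\}$ the matrix $S$ equals this global parametrix up to an $\mathcal{O}(N^{-1})$ correction, which together with control of the correction's derivative gives items $1)$ and $2)$. The local analysis near the singularities is where the three regimes enter. For $t>CN^{-1}$ the points $e^{\pm it}$ are separated on the scale $1/N$, and one may place two confluent-hypergeometric (modified Bessel) local parametrices in disks of radius $\asymp t$ around them, precisely of the type appearing in the proof of Theorem \ref{th:singpolyasy1}, with matching error $\mathcal{O}((Nt)^{-1})$; this yields the $\min$-structure and the $t$-dependence there. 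For $t<CN^{-1}$ one instead uses the single merged parametrix of \cite{ck} in a disk of fixed radius around $z=1$, built from the model Riemann--Hilbert problem whose entries are confluent hypergeometric functions carrying the extra parameter $s\asymp Nt$ and whose $\sigma$-function is the function $\sigma$ of Theorem \ref{th:fhcont}(2); its small-$s$ behaviour (for $Nt<c$ essentially that of a single Fisher--Hartwig singularity of strength $2\beta$), together with the transitional regime $cN^{-1}<t<CN^{-1}$, gives the $\max$-structure and the $N^\beta$ growth in those regimes. In all cases the final transformation $R=S\,(P)^{-1}$ satisfies $R=I+\mathcal{O}(N^{-1})$ and $R'=\mathcal{O}(1)$ uniformly in $t\in(0,t_0)$ by the usual small-norm estimate on its jump matrix; uniformity across the transition $Nt\asymp 1$ is exactly what the analysis of \cite{ck} provides.

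Finally, unravelling $Y=R\,P$ (times the lens and $z^{\pm N\sigma_3}$ factors) and using $\phi_N=\chi_N Y_{11}$ gives the stated asymptotics: on the rays $[1-\gamma,1+\gamma]e^{\pm it}$ the relevant entry of the local parametrix is a product of $1/\sqrt{f_t(z)}$, a power of the conformal coordinate $\zeta\asymp N|1-|z||$, and modified Bessel / confluent hypergeometric functions, so the expansions $I_\nu(x)=x^\nu/(2^\nu\Gamma(\nu+1))+\mathcal{O}(x^{\nu+1})$ for $x<1$ and $\sqrt{x}\,e^{-x}I_\nu(x)=1+\mathcal{O}(x^{-1})$ for $x>1$ produce the stated dichotomies for $\phi_N$; the derivative estimates follow by differentiating this representation using $I_\nu'(x)=\tfrac12(I_{\nu-1}(x)+I_{\nu+1}(x))$, $\tfrac{d}{dz}=-\tfrac N2 z^{-1}\tfrac{d}{d\zeta}$, and $f_t'(z)/f_t(z)=\mathcal{O}(1)+\beta/(z-e^{it})+\beta/(z-e^{-it})$, exactly as in the proof of Theorem \ref{th:singpolyasy1}, the extra bookkeeping being that cancellations such as $\tfrac{\log x}{x-1}-\tfrac1x=\mathcal{O}(x-1)$ now have to be tracked at both singularities and uniformly in $t$. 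I expect the main obstacle to be precisely the construction and matching of the merged parametrix, and the extraction of the sharp $\max/\min$ structure from its model functions uniformly across $Nt\asymp 1$ --- that is, re-running with explicit error control the part of \cite{ck} that is not stated there in the exact form needed here.
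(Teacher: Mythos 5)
Your plan is, in substance, the same as the paper's: the theorem is obtained by unravelling the Riemann--Hilbert analysis of \cite{ck} (global Szeg\H{o}-function parametrix for items on compact subsets of $\lbrace|z|<1\rbrace$ and $\lbrace|z|>1\rbrace$, the merged model problem $\Psi(\zeta,s)$ with $\zeta=\frac{1}{t}\log z$, $s=-2iNt$ near the singularities, small-norm control of the outer correction, Bessel/confluent-hypergeometric expansions, and differentiation of the explicit representations for the derivative bounds), and your identification of the uniformity in $t$ and the derivative estimates as the points needing extra work is exactly right. The one genuine organizational difference is the regime $t>CN^{-1}$: you propose two Bessel parametrices in shrinking disks of radius $\asymp t$ with matching error $\mathcal{O}((Nt)^{-1})$, whereas the paper keeps the single merged parametrix in a fixed neighbourhood for all $t\in(0,t_0)$ and instead invokes the $s\to-i\infty$ asymptotics of $\Psi$ from Section 5 of \cite{ck} (the factorization into an $\mathcal{O}(1)$ outer factor and the local model $M$); your route is standard and would work, but it requires a new small-norm analysis with $t$-dependent disks that is not literally in \cite{ck}, while the paper's route inherits uniformity across $Nt\asymp1$ directly from \cite{ck}'s construction. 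Two cautions if you execute your plan: in the merged regime the relevant coordinate gives $\frac{d}{dz}=\frac{1}{tz}\frac{d}{d\zeta}$ (equivalently a factor of order $N$ in the rescaled variable $\lambda$), not the relation $\frac{d}{dz}=-\frac{N}{2}z^{-1}\frac{d}{d\zeta}$ you quote from Theorem \ref{th:singpolyasy1}, and it is this factor that produces the $t^{-1}$ and $N$ prefactors in the stated derivative bounds; and for $Nt<c$, $N|1-|z||<1$, the heuristic ``effectively one singularity of strength $2\beta$'' only yields the crude bound $\mathcal{O}(1)N^{\beta}$ --- the refined first form $\mathcal{O}(1)f_t(z)^{-1/2}N^{\beta}|1-|z||^{\beta/2}\max(|1-|z||^{\beta/2},t^{\beta/2})$ needs the small-$s$ parametrix $P_0$ of Section 6 of \cite{ck}, whose first column carries the product $\lambda^{\beta/2}(\lambda-s)^{\beta/2}$ retaining both branch points, which is what the paper uses. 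Finally, note that the regime-by-regime bookkeeping you defer in your last sentence (including cancellations such as $\frac{\log x}{x-1}-\frac{1}{x}=\mathcal{O}(x-1)$ in the derivative estimates) is precisely the bulk of the paper's proof, so as written your text is a correct roadmap rather than a complete argument.
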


\begin{remark}
The orthogonal polynomials with respect to the weight $|z-w_1|^{\beta}|z-w_2|^\beta$ with $|\mathrm{arg}w_1-\mathrm{arg}w_2|=2t$ are obtained from the ones with weight $f_t$ by simply rotating the argument - in particular, the same asymptotics hold near $w_i$.
\end{remark}

\begin{proof}
In \cite{ck}, it is not the asymptotics of the orthogonal polynomials but the Toeplitz determinant that was the main focus, so to check the required estimates requires a fair amount of work from us.

\vspace{0.3cm}

The proof for the compact subset of the unit disk is roughly the same as in the case where the distance of the singularities is bounded away from zero. In \cite{ck}, it is proven that there exists a function $\Upsilon$ (corresponding to $S$ in the previous case) which is analytic in $\C$ apart from a jump contour which consists of the boundary of a fixed disk $\mathcal{U}$ containing the points $e^{\pm it}$ for $t<t_0$ (but not intersecting the given compact subset) and part of a lens on the unit circle going from $e^{it}$ to $e^{-it}$ in the counter-clockwise direction - also not intersecting the compact subset (similar to the case when the distance of the singularities is bounded, except here we only have a single lens instead of one going also from $e^{-it}$ to $e^{it}$ in the counter-clockwise direction. This function satisfies $\Upsilon(z)=I+\mathcal{O}(N^{-1})$ and $\Upsilon'(z)=\mathcal{O}(N^{-1})$ uniformly off of the jump contour. 

\vspace{0.3cm}

It follows from the analysis in \cite{ck} that one then has for $\lbrace |z|<1:z\notin\overline{\mathcal{U}}\rbrace$ (here we write $\tau=D_i(f,0)^{-1}$)

\begin{equation}
\Phi_N(z)=\frac{\phi_N(z)}{\chi_N}=\left(\Upsilon(z)\left(\frac{D_i(f_t,z)}{\tau}\right)^{\sigma_3}\begin{pmatrix}
0 & 1\\
-1 & 0
\end{pmatrix}\right)_{11}
\end{equation}

\noindent and $\chi_{N-1}$ is given by in terms of the $(2,1)$ entry evaluated at zero. From this one finds the claim for the compact subset of the unit disk.

\vspace{0.3cm}

In the case of the compact subset of $\lbrace |z|>1\rbrace$, the argument is again analogous to the situation where the distance between the singularities is bounded.
\vspace{0.3cm}

The part for $z\in[1-\gamma,1+\gamma]\times e^{\pm i t}$ is more complicated. We now choose $\mathcal{U}$ so that it contains these segments. For simplicity, let us consider the situation where $z=(1-r)\times e^{it}$ for $r>0$. The other cases can be treated in a similar manner. Moreover, we calculate things for such a $z$ by taking the $\epsilon\to 0^{+}$ limit of $(1-r)\times e^{i(t-\epsilon)}$. In this region, we can write 

\begin{equation}
\Phi_N(z)=\left(\Upsilon(z)P(z)\right)_{11},
\end{equation}

\noindent where $\Upsilon$ again satisfies the same conditions and 

\begin{align}
\notag P(z)&=\begin{pmatrix}
0 & 1\\
1 & 0
\end{pmatrix}\left(\frac{D_i(f_t,z)D_e(f_t,z)}{\tau^{2}}\right)^{-\frac{1}{2}\sigma_3}\Psi\left(\frac{1}{t}\log z, -2iN t\right)\\
&\times\left(-z^{\frac{N}{2}\sigma_3}f_t(z)^{-\sigma_3/2}\sigma_3\right),
\end{align}

\noindent and $\Psi$ is defined by the following Riemann-Hilbert problem (in \cite{ck}, it is shown that there is a unique solution to the problem in our setting). 

\begin{remark}
This $\Psi$ is no longer the same as in the previous case, but we keep the notation to be  consistent with the original articles as our arguments rely heavily on the results in the  original articles.
\end{remark}

\begin{definition}\label{def:psi}
For each $s\in(-i \R_+)$ (i.e. purely imaginary complex number with strictly negative imaginary part), let $\zeta\mapsto \Psi(\zeta,s)$ be the unique solution to the following Riemann-Hilbert problem:

\vspace{0.3cm}

Find a function $\Psi=\Psi(\zeta,s)$ such that

\begin{itemize}

\item $\Psi:\C\setminus \Gamma\to \C^{2\times 2}$ is analytic, where 

\begin{equation}
\begin{array}{lll}
\Gamma=\cup_{k=1}^{5}\Gamma_k, & \Gamma_1=i+e^{i\frac{\pi}{4}}\R_+, & \Gamma_2=i +e^{i\frac{3\pi}{4}}\R_+,\\
\Gamma_3=-i+e^{i\frac{5\pi}{4}}\R_+, & \Gamma_4=-i+e^{i\frac{7\pi}{4}}\R_+, & \Gamma_5=[-i,i]. \\
\end{array}
\end{equation}

\item $\Psi$ satisfies the jump conditions: for $\zeta\in \Gamma_k$,

\begin{equation}
\Psi_+(\zeta,s)=\Psi_-(\zeta,s)J_k,
\end{equation}

\noindent where $\Psi_+$ ($\Psi_-$) denotes the limit of $\Psi$ from the left (right) of the contour (the arrows in Figure \ref{figure: Gamma} determine the orientation of the curves), and 

\begin{equation}
\begin{array}{ll}
 J_1=\begin{pmatrix}1&e^{\pi i\beta}\\0&1\end{pmatrix}, & J_2=\begin{pmatrix}1&0\\-e^{-\pi i\beta}&1\end{pmatrix}\\
J_3=\begin{pmatrix}1&0\\-e^{\pi i\beta}&1\end{pmatrix},&  J_4=\begin{pmatrix}1&e^{-\pi i\beta}\\0&1\end{pmatrix}\\
J_5=\begin{pmatrix}0&1\\-1&1\end{pmatrix}. &
\end{array}
\end{equation}
\item In all regions, 

\begin{equation}\label{eq:psiasyexp}
\Psi(\zeta,s)=\left(I+\Psi_1(s)\zeta^{-1}+\Psi_2(s)\zeta^{-2}+\mathcal{O}(|\zeta|^{-3})\right)e^{-\frac{is}{4}\zeta \sigma_3}
\end{equation}

\noindent as $\zeta\to \infty$.

\item For $\beta\notin \Z_+$, let 

\begin{equation}
g=-\frac{1}{2i \sin(\pi \beta)}(e^{\pi i \beta}-1), \quad h=-\frac{1}{2i \sin(\pi \beta)}(1-e^{-i \pi \beta})=g e^{-i\pi\beta},
\end{equation}

\begin{equation}
G_{III}=\begin{pmatrix}
1 & g\\
0 & 1
\end{pmatrix}, \quad G_I=G_{III}J_5^{-1}, \quad G_{II}=G_1 J_1,
\end{equation}

\noindent and

\begin{equation}
H_{III}=\begin{pmatrix}
1 & h\\
0 & 1
\end{pmatrix}, \quad H_{IV}=H_{III}J_3^{-1},\quad H_I=H_{IV}J_4^{-1}.
\end{equation}

Then define $F_1=F_1(\zeta,s)$ in a neighborhood of $i$ by 

\begin{equation}
\Psi(\zeta,s)=F_1(\zeta,s)(\zeta-i)^{\frac{\beta}{2}\sigma_3} G_j
\end{equation}

\noindent in regions $j=I,II,III$, where the branch cut of $(\zeta-i)^{\frac{\beta}{2}\sigma_3}$ is along $i+e^{\frac{3\pi i }{4}}(0,\infty)$ and $\mathrm{arg}(\zeta-i)\in(-5\pi/4, 3\pi /4)$, and $F_2=F_2(\zeta,s)$ in a neighborhood of $-i$ by 

\begin{equation}
\Psi(\zeta,s)=F_2(\zeta,s)(\zeta+i)^{\frac{\beta}{2}\sigma_3} H_j
\end{equation}

\noindent in regions $j=I,III,IV$, where the branch cut now is along $-i+e^{\frac{5\pi i}{4}}(0,\infty)$ and $\mathrm{arg}(\zeta+i)\in(-3\pi/4,5\pi/4)$.

\vspace{0.3cm}

If $\beta\in\Z_+$, define $G_{III}=H_{III}=I$ and the other matrices through the jump matrices as before, and define in the region $j$, $F_1$ and $F_2$ (with similar branch cuts) by 

\begin{equation}
\Psi(\zeta,s)=F_1(\zeta,s)(\zeta-i)^{\frac{\beta}{2}\sigma_3}\begin{pmatrix}
1 & \frac{1-e^{\pi i \beta}}{2\pi i e^{\pi i \beta}}\log(\zeta-i)\\
0 & 1
\end{pmatrix}G_j
\end{equation}

\noindent and

\begin{equation}
\Psi(\zeta,s)=F_2(\zeta,s)(\zeta+i)^{\frac{\beta}{2}\sigma_3}\begin{pmatrix}
1 & \frac{e^{-\pi i \beta}-1}{2\pi i e^{-\pi i \beta}}\log(\zeta+i)\\
0 & 1
\end{pmatrix}H_j.
\end{equation}

\vspace{0.3cm}

Then these functions $F_1$ and $F_2$  must be analytic functions of $\zeta$ in some neighborhoods of $\pm i$.

\end{itemize}
\end{definition}

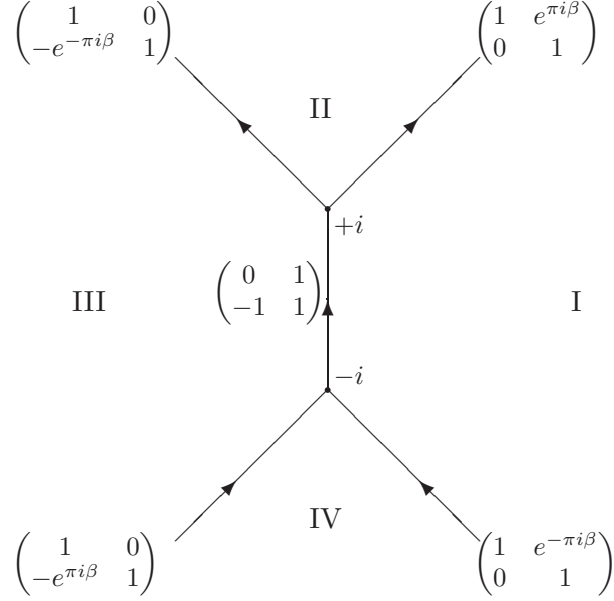
\begin{figure}[t]\label{fig:psijumps}
\begin{center}
    \setlength{\unitlength}{0.8truemm}
    \begin{picture}(110,100)(-10,-7.5)
    \put(50,60){\thicklines\circle*{.8}}
    \put(50,30){\thicklines\circle*{.8}}
    \put(51,56){\small $+i$}
    \put(51,31){\small $-i$}
    \put(50,60){\thicklines\circle*{.8}}
    \put(50,30){\thicklines\circle*{.8}}
    \put(50,60){\line(1,1){25}}
    \put(50,30){\line(1,-1){25}}
    \put(50,60){\line(-1,1){25}}
    \put(50,30){\line(-1,-1){25}}
    \put(50,30){\line(0,1){30}}
    \put(65,75){\thicklines\vector(1,1){.0001}}
    \put(65,15){\thicklines\vector(-1,1){.0001}}
    \put(35,75){\thicklines\vector(-1,1){.0001}}
    \put(50,45){\thicklines\vector(0,1){.0001}}
    \put(35,15){\thicklines\vector(1,1){.0001}}

    \put(74,88){\small $\begin{pmatrix}1&e^{\pi i\beta}\\0&1\end{pmatrix}$}
    \put(-2,88){\small $\begin{pmatrix}1&0\\-e^{-\pi i\beta}&1\end{pmatrix}$}
    \put(31,45){\small $\begin{pmatrix}0&1\\-1&1\end{pmatrix}$}
    \put(-2,0){\small $\begin{pmatrix}1&0\\-e^{\pi i\beta}&1\end{pmatrix}$}
    \put(74,0){\small $\begin{pmatrix}1&e^{-\pi i\beta}\\0&1\end{pmatrix}$}
    \put(8,43){III}
    \put(90,43){I}
    \put(47,7){IV}
    \put(47,75){II}
    \end{picture}
    \caption{The jump contour and jump matrices for $\Psi$ (a modification of Figure 1 in \cite{ck}).}
    \label{figure: Gamma}
\end{center}
\end{figure}

We thus find

\begin{align}
\notag \Phi_N(z)&=-\Upsilon_{11}(z)\left(\frac{D_i(f_t,z)D_e(f_t,z)}{\tau^{2} }\right)^{\frac{1}{2}}\frac{z^{\frac{N}{2}}}{\sqrt{f_t(z)}}\Psi_{21}\left(\frac{1}{t}\log z, -2iN t\right)\\
&+\Upsilon_{12}(z)\left(\frac{\tau^{2}}{D_i(f_t,z)D_e(f_t,z)}\right)^{\frac{1}{2}}\frac{z^{\frac{N}{2}}}{\sqrt{f_t(z)}}\Psi_{11}\left(\frac{1}{t}\log z, -2iN t\right).
\end{align}

We note that if we write $s=-2iNt$ and $\zeta=\frac{1}{t}\log z$, then  

\begin{equation}
z^{\frac{N}{2}}=e^{\frac{i}{4}s\zeta}=e^{\frac{1}{4}|s|\zeta}
\end{equation}

\noindent so the relevant question is the asymptotics of the first column of

\begin{equation}
\Psi(\zeta,s)e^{\frac{1}{4}|s|\zeta\sigma_3},
\end{equation}

\noindent where $s$ is purely imaginary with a negative imaginary part and as we consider $z=(1-r)e^{it}$, $\zeta=i+\frac{1}{t}\log |z|$. We need to study these asymptotics in the different regimes of $\zeta$ and $s$ ($\zeta$ close to $i$ or far away from $i$; $s$ small, bounded, or large). This can be done by making use of results in \cite{ck}. 

\vspace{0.3cm}

\begin{remark}
For notational simplicity, we will only go over things for non-integer $\beta$. Integer $\beta$ would complicate some formulas slightly, but can be done in a similar manner. 
\end{remark}

\emph{Bounded $s$:} Let us first focus on the case where $s$ is bounded, i.e. we assume that there exist some constants $0<c<C<\infty$ (we think of $c$ being small and $C$ large) and consider $c<Nt<C$ so that $s=-2iNt$ is bounded. Let us now assume that $\zeta$ is in a small neighborhood of $i$ so we have 

\begin{equation}
\Psi(\zeta,s)e^{\frac{|s|\zeta}{4}\sigma_3}=F_1(\zeta,s)(\zeta-i)^{\frac{\beta}{2}\sigma_3}G_{III}e^{\frac{|s|\zeta}{4}\sigma_3}.
\end{equation}

As $G_{III}$ is upper triangular with bounded entries and $F_1(\zeta,s)$ is bounded in the neighborhood of $i$ (as $s$ is bounded), we see that for $\zeta$ in the fixed neighborhood of $i$,

\begin{equation}
\Psi(\zeta,s)e^{\frac{|s|\zeta}{4}\sigma_3}=\begin{pmatrix}
\mathcal{O}(1)(\zeta-i)^{\frac{\beta}{2}} & *\\
\mathcal{O}(1)(\zeta-i)^{\frac{\beta}{2}} & *
\end{pmatrix}
\end{equation}

\noindent where the $\mathcal{O}(1)$ terms are uniform throughout the neighborhood and uniform in $c<Nt<C$. 
\vspace{0.3cm}

If on the other hand $\zeta$ is outside of the given neighborhood (and as we are outside the fixed neighborhood of $-i$), we have by the asymptotic expansion of $\Psi$ - \eqref{eq:psiasyexp} - that 

\begin{equation}
\Psi(\zeta,s)e^{\frac{|s|}{4}\zeta\sigma_3}=\mathcal{O}(1)
\end{equation}

\noindent uniformly in the relevant parameters.

\vspace{0.3cm}

Let us now note that for $z\in[1-\gamma,1+\gamma]e^{it}$, $\zeta-i=(\log|z|)/t$. Thus $\zeta$ in a small neighborhood of $i$ means roughly (recall $t\sim N^{-1}$) $N|1-|z||<1$. We conclude that 

\begin{equation}
\Phi_N(z)=\begin{cases}
\mathcal{O}(1)\frac{1}{\sqrt{f_t(z)}}(N|1-|z||)^{\frac{\beta}{2}}, & N|1-|z||<1\\
\mathcal{O}(1)\frac{1}{\sqrt{f_t(z)}}, & N|1-|z||>1
\end{cases}
\end{equation} 

\noindent uniformly in $z$ and $Nt\in(c,C)$.

\vspace{0.3cm}

Again the relevant question is to analyze the first column of 

\begin{equation}
\frac{d}{dz}\frac{1}{\sqrt{f_t(z)}}\Psi(\zeta,s)e^{\frac{|s|}{4}\zeta\sigma_3}.
\end{equation}

Let us start with the case where $\zeta$ is close to $i$ (so $|1-|z||<N^{-1}$). Let us make use of the representation $\Psi(\zeta,s)=F_1(\zeta,s)(\zeta-i)^{\frac{\beta}{2}\sigma_3}G_{III}$. We have (denoting by $F_1'(\zeta,s)$ the derivative of $F_1(\zeta,s)$ with respect to $\zeta$)

\begin{align}\label{eq:psider}
\notag \frac{d}{dz}\frac{1}{\sqrt{f_t(z)}}\Psi(\zeta,s)e^{\frac{|s|}{4}\zeta\sigma_3}&=\frac{1}{\sqrt{f_t(z)}}\bigg(-\frac{1}{2}\frac{f_t'(z)}{f_t(z)}F_1(\zeta,s)(\zeta-i)^{\frac{\beta}{2}\sigma_3}G_{III}e^{\frac{|s|}{4}\zeta\sigma_3}\\
&+\frac{1}{zt}F_1'(\zeta,s)(\zeta-i)^{\frac{\beta}{2}\sigma_3}G_{III}e^{\frac{|s|}{4}\zeta\sigma_3}\\
\notag &+ \frac{1}{zt}\frac{1}{\zeta-i}\frac{\beta}{2}F_1(\zeta,s)\sigma_3(\zeta-i)^{\frac{\beta}{2}\sigma_3}G_{III}e^{\frac{|s|}{4}\zeta\sigma_3}\\
\notag &+\frac{1}{zt}\frac{|s|}{4}F_1(\zeta,s)(\zeta-i)^{\frac{\beta}{2}\sigma_3}G_{III}e^{\frac{|s|}{4}\zeta\sigma_3}\sigma_3\bigg).
\end{align}

For the first term, we note that 

\begin{equation}
f_t(z)=(1-ze^{it})^{\frac{\beta}{2}}(1-ze^{-it})^{\frac{\beta}{2}}(1-z^{-1}e^{it})^{\frac{\beta}{2}}(1-z^{-1}e^{-it})^{\frac{\beta}{2}}
\end{equation}

\noindent so that 

\begin{align}\label{eq:fder}
\frac{f_t'(z)}{f_t(z)}&=\frac{\beta }{z-e^{it}}+\frac{\beta }{z-e^{-it}}+\mathcal{O}(1).
\end{align}

We then point out that 

\begin{equation}
\left(F_1(\zeta,s)(\zeta-i)^{\frac{\beta}{2}\sigma_3}G_{III}e^{\frac{|s|}{4}\zeta\sigma_3}\right)_{j1}=\left(F_1(\zeta,s)\sigma_3(\zeta-i)^{\frac{\beta}{2}\sigma_3}G_{III}e^{\frac{|s|}{4}\zeta\sigma_3}\right)_{j1},
\end{equation}

\noindent so we can combine the first and third term of \eqref{eq:psider} into 

\begin{equation}
\mathcal{O}(1)\frac{1}{\sqrt{f_t(z)}}(\zeta-i)^{\frac{\beta}{2}}\left(-\frac{\beta}{2}\frac{1}{z-e^{it}}+\frac{\beta}{z-e^{-it}}+\mathcal{O}(1)+\frac{\beta}{2}\frac{1}{zt}\frac{1}{\zeta-i}\right).
\end{equation}

Writing $1/(\zeta-i)=t/(\log (z/e^{it}))$ we see that this can be written as 

\begin{align}
\notag \mathcal{O}(1)&\frac{1}{\sqrt{f_t(z)}}(\zeta-i)^{\frac{\beta}{2}}\frac{\beta}{2}e^{-it}\left(-\frac{1}{|z|-1}+\mathcal{O}(1)t^{-1}+\frac{1}{|z|\log |z|}\right)\\
&=\mathcal{O}(1)\frac{1}{\sqrt{f_t(z)}}N(N|1-|z||)^{\frac{\beta}{2}}
\end{align} 

\noindent with $\mathcal{O}(1)$ being uniform in $z$ and $Nt\in(c,C)$. The remaining two terms in \eqref{eq:psider} can be estimated to be of the same order so we find that for $N|1-|z||<1$, 

\begin{equation}
\Psi_N'(z)=\mathcal{O}(1)\frac{1}{\sqrt{f_t(z)}}N(N|1-|z||)^{\frac{\beta}{2}}.
\end{equation}

For $|1-|z||>N^{-1}$, we note that it is proven in \cite{ck} that 

\begin{equation}
\frac{d}{d\zeta}\Psi(\zeta,s)=A(\zeta,s)\Psi(\zeta,s),
\end{equation}

\noindent where for bounded $s$, $A$ is (uniformly) $\mathcal{O}(1)$ outside the fixed neighborhoods of $\pm i$. Thus 

\begin{align}
\notag\frac{d}{d\zeta}(\Psi(\zeta,s)e^{\frac{|s|}{4}\zeta\sigma_3})&=A(\zeta,s)\Psi(\zeta,s)e^{\frac{|s|}{4}\zeta\sigma_3}+\Psi(\zeta,s)e^{\frac{|s|}{4}\zeta\sigma_3}\frac{|s|}{4}\sigma_3\\
&=\mathcal{O}(1),
\end{align}

\noindent and

\begin{align}
\notag\frac{d}{dz}\frac{1}{\sqrt{f_t(z)}}\Psi(\zeta,s)e^{\frac{|s|}{4}\zeta\sigma_3}&=\frac{1}{|1-|z||}\frac{1}{\sqrt{f_t(z)}}\mathcal{O}(1)+\frac{1}{t}\frac{1}{\sqrt{f_t(z)}}\mathcal{O}(1)\\
&=\mathcal{O}(1)N\frac{1}{\sqrt{f_t(z)}}.
\end{align}

\vspace{0.3cm}

\emph{Large $s$:} Consider now the situation where $Nt>C$. For this, we make use of the $s\to-i\infty$ asymptotics of $\Psi(\zeta,s)$ which have been studied in section 5 of \cite{ck}. For us, the central result of this section is the following: in the region we are interested in (i.e. for $z=|z|e^{it}$ with $|z|<1$ and small perturbations of the angle),

\begin{equation}
\Psi(\zeta,s)e^{\frac{|s|}{4}\zeta\sigma_3}=\begin{cases}
R(\zeta,s)P_1(\zeta,s), & |\zeta-i|<1\\
R(\zeta,s), & |\zeta-i|>1
\end{cases}
\end{equation}

\noindent where $R(\zeta,s)=\mathcal{O}(1)$ and $\frac{d}{d\zeta}R(\zeta,s)=\mathcal{O}(1)$ uniformly in $\zeta$ off of a certain jump contour which we don't need to worry about and uniformly in the relevant domain of $s$. 

\begin{remark}
Here in the condition $|\zeta-i|<1$, the upper bound should be replaced by some small enough $\mathcal{O}(1)$ number, but this only changes our results by a uniform $\mathcal{O}(1)$ factor, so for notational simplicity, we stick to this condition.
\end{remark}

Here the function $P_1$ is defined in the following manner:

\begin{equation}\label{eq:mref}
P_1(\zeta,s)=e^{-i\frac{|s|}{4}\sigma_3}e^{\frac{\pi i }{4}\beta\sigma_3}M\left(\frac{|s|}{2}(\zeta-i)\right)e^{-\frac{\pi i}{4}\beta\sigma_3}e^{\frac{|s|}{4}\zeta\sigma_3},
\end{equation}

\noindent where in the region relevant to us, 

\begin{equation}
M(\lambda)=L(\lambda) \lambda^{\frac{\beta}{2}\sigma_3}\widetilde{G}_3,
\end{equation}

\noindent where $L$ is a function which is analytic in a neighborhood of zero and depends only on $\beta$. The branch of the power is chosen so that $\mathrm{arg}\lambda\in(0,2\pi)$. $\widetilde{G}_3$ is an upper triangular matrix with ones on the diagonal. Another fact we need about $M$ is that as outside of a neighborhood of the origin,

\begin{equation}\label{eq:masy}
M(\lambda)=(1+\mathcal{O}(\lambda^{-1}))e^{-\frac{1}{2}\lambda\sigma_3}.
\end{equation} 

Let us consider first the situation $N|1-|z||<1$. This translates into (up to a uniform $\mathcal{O}(1)$ factor) $|s||(\zeta-i)|<1$ so as $|s|$ is large we have 

\begin{align}
\notag \Psi(\zeta,s)e^{\frac{|s|}{4}\zeta\sigma_3}&=\mathcal{O}(1)(|s|(\zeta-i))^{\frac{\beta}{2}\sigma_3}\widetilde{G}_3e^{\frac{|s|}{4}\zeta\sigma_3}e^{-\frac{\pi i}{4}\beta\sigma_3}\\
&=\mathcal{O}(1)(N|1-|z||)^{\frac{\beta}{2}\sigma_3}\widetilde{G}_3e^{\frac{|s|}{4}\zeta\sigma_3}e^{-\frac{\pi i}{4}\beta\sigma_3}\\
\notag &=\begin{pmatrix}
\mathcal{O}(1) (N|1-|z||)^{\frac{\beta}{2}}e^{\frac{|s|}{4}\zeta} & *\\
\mathcal{O}(1) (N|1-|z||)^{\frac{\beta}{2}}e^{\frac{|s|}{4}\zeta} & *
\end{pmatrix}.
\end{align}

We then note that 

\begin{equation}
e^{\frac{|s|}{4}\zeta}=e^{\frac{|s|}{4}(\zeta-i)}e^{\frac{|s|}{4}i}=\mathcal{O}(1)
\end{equation}

\noindent for the $\zeta$ we are considering now. We conclude that for $N|1-|z||<1$, 

\begin{align}
\Phi_N(z)&=\mathcal{O}(1)\frac{1}{\sqrt{f_t(z)}}(N|1-|z||)^{\frac{\beta}{2}}
\end{align}

\noindent uniformly in the relevant variables.

\vspace{0.3cm}

Consider next the situation where $N|1-|z||>1$, but $|\zeta-i|<1$ (these conditions are roughly equivalent to $N^{-1}<|1-|z||<t$). In this region we use \eqref{eq:masy} for estimating $M$. We find

\begin{equation}
\Psi(\zeta,s)e^{\frac{|s|}{4}\zeta\sigma_3}=\mathcal{O}(1)e^{-\frac{|s|}{4}(\zeta-i)}e^{-\frac{\pi i }{4}\beta\sigma_3}e^{\frac{|s|}{4}\zeta\sigma_3}=\mathcal{O}(1)
\end{equation}

\noindent and conclude that 

\begin{equation}
\Phi_N(z)=\mathcal{O}(1)\frac{1}{\sqrt{f_t(z)}}
\end{equation}

\noindent uniformly in the relevant parameters.

\vspace{0.3cm}

Consider finally the case where $|\zeta-i|>1$ (or $|1-|z||>t$). Here we have also $\Psi(\zeta,s)e^{\frac{|s|}{4}\zeta\sigma_3}=\mathcal{O}(1)$ so again

\begin{equation}
\Phi_N(z)=\mathcal{O}(1)\frac{1}{\sqrt{f_t(z)}}
\end{equation}

\noindent uniformly in the relevant parameters. 

\vspace{0.3cm}

For the derivative, we note that in the $||z|-1|<N^{-1}$ regime, the argument of the bounded $s$ case goes over in an analogous manner (so differentiating picks up a factor of $|s|/t=2N$) and we have in this regime (uniformly)

\begin{equation}
\Phi_N'(z)=\mathcal{O}(1)\frac{1}{\sqrt{f_t(z)}}N(N|1-|z||)^{\frac{\beta}{2}}.
\end{equation}

\vspace{0.3cm}

In the regime $N^{-1}<|1-|z||<t$, $\frac{d}{d\zeta}(\Psi(\zeta,s)e^{\frac{|s|}{4}\zeta\sigma_3})=\mathcal{O}(1)$ so one finds with similar arguments

\begin{align}
\notag \Phi_N'(z)&=\mathcal{O}(1)\frac{1}{\sqrt{f_t(z)}}\left(-\frac{1}{2}\frac{f_t'(z)}{f_t(z)}+\mathcal{O}(1)t^{-1}\right)\\
&=\mathcal{O}(1)\frac{1}{\sqrt{f_t(z)}}|1-|z||^{-1}.
\end{align}

\vspace{0.3cm}

Finally for $|1-|z||>t$ we have again with similar arguments for estimating $f_t(z)$ and its derivatives

\begin{equation}
\Phi_N'(z)=\mathcal{O}(1)\frac{1}{\sqrt{f_t(z)}}t^{-1}.
\end{equation}

Again all of these estimates were uniform.

\vspace{0.3cm}

\emph{Small $s$:} Finally we consider the case where $Nt<c$. For this case, it is proven in \cite{ck} (section 6) that if we write $\lambda=|s|(\zeta-i)/2=N\log |z|$, then

\begin{equation}
\Psi(\zeta,s)=\begin{cases}
H(\lambda,s)P_0(\lambda,s), & N|1-|z||<1\\
H(\lambda,s)M(\lambda), & N|1-|z||>1
\end{cases}
\end{equation}

\noindent where $M$ is as in \eqref{eq:mref} except that $\beta/2$ is replaced by $\beta$. In the region relevant to us, $P_0$ can be written as 

\begin{equation}
P_0(\lambda,s)=L(\lambda)\begin{pmatrix}
1 & c_0 J(\lambda;s)\\
0 & 1
\end{pmatrix}\lambda^{\frac{\beta}{2}\sigma_3}(\lambda-s)^{\frac{\beta}{2}\sigma_3} e^{\beta\pi i \sigma_3}\widetilde{G}_3,
\end{equation}

\noindent with a suitable convention for the branch cuts (these aren't important for us as we are estimating only magnitude). The function $L$ is analytic in the relevant neighborhood of the origin (in fact it's essentially the same function as before). The function $J$ can be defined as 

\begin{equation}
J(\lambda;s)=\frac{1}{\pi i}\int_s^0\frac{|\xi|^{\beta}|\xi-s|^\beta}{\xi-\lambda}d\xi.
\end{equation}

Also $H$ and its derivative are $\mathcal{O}(1)$ uniformly off of a certain jump contour (whose details are not relevant for us).

\begin{remark}
Again the neighborhood of the origin is not necessarily exactly $N|1-|z||<1$, but up to a $\mathcal{O}(1)$ factor, we can pretend it is.
\end{remark}

Consider then the $N|1-|z||<1$ situation. Arguing as before (e.g. the $(1,2)$ entries of the upper triangular matrices don't affect the first column of $\Psi$), we find

\begin{equation}
\Psi(\zeta,s)e^{\frac{|s|}{4}\zeta\sigma_3}=\begin{pmatrix}
\mathcal{O}(1)(N\log |z|)^{\frac{\beta}{2}}(N\log |z|-s)^{\frac{\beta}{2}} & *\\
\mathcal{O}(1)(N\log |z|)^{\frac{\beta}{2}}(N\log |z|-s)^{\frac{\beta}{2}} & *
\end{pmatrix}e^{\frac{|s|}{4}(\zeta-i)\sigma_3}e^{i\frac{|s|}{4}\sigma_3}.
\end{equation}

We first note that $N|1-|z||<1$ is roughly equivalent to $|s||\zeta-i|<1$ so the last two terms don't affect anything. We then note that 

\begin{equation}
(N\log |z|-s)^{\frac{\beta}{2}}=\mathcal{O}(1)N^{\frac{\beta}{2}}(\log |z|-2it)^{\frac{\beta}{2}}
\end{equation}

\noindent so we see that 
\begin{equation}
\Phi_N(z)=\mathcal{O}(1)\frac{1}{\sqrt{f_t(z)}}N^\beta|1-|z||^{\frac{\beta}{2}}\max(|1-|z||^{\frac{\beta}{2}},t^{\frac{\beta}{2}}).
\end{equation}

For $N|1-|z||>1$ we recall \eqref{eq:masy} and find (uniformly)

\begin{align}
\notag \Psi(\zeta,s)e^{\frac{|s|}{4}\zeta\sigma_3}&=\mathcal{O}(1)e^{-\frac{1}{2}N\log |z|\sigma_3}e^{\frac{|s|}{4}(i+\frac{1}{t}\log |z|)\sigma_3}\\
&=\mathcal{O}(1).
\end{align}

Thus for $N|1-|z||>1$

\begin{equation}
\Phi_N(z)=\mathcal{O}(1)\frac{1}{\sqrt{f_t(z)}}
\end{equation}

\noindent uniformly.

\vspace{0.3cm}

Let us now consider the derivative. Returning to the notation $\lambda=|s|(\zeta-i)/2=N\log |z|$, we recall that we can write (for $N|1-|z||<1$ or roughly equivalently $|\lambda|<1$)

\begin{equation}
\Psi(\zeta,s)e^{\frac{|s|}{4}\zeta\sigma_3}=\begin{pmatrix}
\mathcal{O}(1)\lambda^{\frac{\beta}{2}}(\lambda-s)^{\frac{\beta}{2}} & *\\
\mathcal{O}(1)\lambda^{\frac{\beta}{2}}(\lambda-s)^{\frac{\beta}{2}}  & *
\end{pmatrix},
\end{equation}

\noindent where the $\mathcal{O}(1)$ also have uniformly $\mathcal{O}(1)$ $\lambda$-derivatives. We then note that 

\begin{equation}
\frac{d}{dz}=\frac{1}{tz}\frac{d}{d\zeta}=\frac{1}{tz}\frac{|s|}{2}\frac{d}{d\lambda}=\frac{N}{z}\frac{d}{d\lambda}.
\end{equation}

We thus have 

\begin{align}
\notag& \frac{d}{dz}\frac{1}{\sqrt{f_t(z)}}\Psi(\zeta,s)e^{\frac{|s|}{4}\zeta\sigma_3}\\
&=\frac{1}{\sqrt{f_t(z)}}\begin{pmatrix}
\mathcal{O}(1)\lambda^{\frac{\beta}{2}}(\lambda-s)^{\frac{\beta}{2}}\left(-\frac{1}{2}\frac{f_t'(z)}{f_t(z)}+\frac{N}{z}\frac{\beta}{2}(\lambda^{-1}+(\lambda-s)^{-1})\right) & *\\
\mathcal{O}(1)\lambda^{\frac{\beta}{2}}(\lambda-s)^{\frac{\beta}{2}}\left(-\frac{1}{2}\frac{f_t'(z)}{f_t(z)}+\frac{N}{z}\frac{\beta}{2}(\lambda^{-1}+(\lambda-s)^{-1})\right) &*
\end{pmatrix}\\
&\notag +\frac{N}{\sqrt{f_t(z)}}\begin{pmatrix}
\mathcal{O}(1)\lambda^{\frac{\beta}{2}}(\lambda-s)^{\frac{\beta}{2}} & *\\
\mathcal{O}(1)\lambda^{\frac{\beta}{2}}(\lambda-s)^{\frac{\beta}{2}} & *
\end{pmatrix}.
\end{align}

Recalling \eqref{eq:fder}, we have

\vspace{0.3cm}

\begin{align}
&\notag -\frac{1}{2}\frac{f_t'(z)}{f_t(z)}+\frac{N}{z}\frac{\beta}{2}(\lambda^{-1}+(\lambda-s)^{-1})\\
\notag &=-\frac{\beta}{2}\frac{1}{z-e^{it}}-\frac{\beta}{2}\frac{1}{z-e^{-it}}+\frac{\beta N}{2z}\frac{1}{N\log |z|}+\frac{\beta N}{2 z}\frac{1}{N\log |z|+2iN t}+\mathcal{O}(1)\\
&=\frac{\beta e^{-it}}{2}\left(\frac{1}{1-|z|}+\frac{1}{|z|\log |z|}\right)+\frac{\beta}{2}e^{it}\left(-\frac{1}{ze^{it}-1}+\frac{1}{ze^{it}}\frac{1}{\log(ze^{it})}\right)+\mathcal{O}(1)\\
\notag &=\mathcal{O}(1)
\end{align}

\noindent uniformly in the relevant parameters (here we used the fact that $((1-w)^{-1}+1/(w\log w)$ is bounded for $w$ close enough to $1$. We conclude that the relevant asymptotics of $\Phi_N'(z)$ for $N|1-|z||<1$ are 

\begin{align}
\notag \Phi_N'(z)&=\mathcal{O}(1)N \frac{1}{\sqrt{f_t(z)}}\lambda^{\frac{\beta}{2}}(\lambda-s)^{\frac{\beta}{2}}\\
&=\mathcal{O}(1)\frac{1}{\sqrt{f_t(z)}}N^{\beta+1}|1-|z||^{\frac{\beta}{2}}(\log |z|+2it)^{\frac{\beta}{2}}\\
&\notag =\mathcal{O}(1)\frac{1}{\sqrt{f_t(z)}}N^{\beta+1}|1-|z||^{\frac{\beta}{2}}\max(|1-|z||^{\frac{\beta}{2}},t^{\frac{\beta}{2}})
\end{align}

\noindent uniformly.

\vspace{0.3cm}

For $N||z|-1|>1$ we had

\begin{equation}
\Psi(\zeta,s)e^{\frac{|s|}{4}\zeta\sigma_3}=\mathcal{O}(1)
\end{equation}

\noindent uniformly as is its $\lambda$-derivative so we find 
\begin{align}
\notag \frac{d}{dz}\frac{1}{\sqrt{f_t(z)}}\Psi(\zeta,s)e^{\frac{|s|}{4}\zeta\sigma_3}&=\frac{1}{\sqrt{f_t(z)}}\left(\mathcal{O}(1)N+\mathcal{O}(1)\frac{f_t'(z)}{f_t(z)}\right)\\
&=\mathcal{O}(1)N\frac{1}{\sqrt{f_t(z)}}.
\end{align}

We conclude that for $N|1-|z||>1$,

\begin{equation}
\Phi_N'(z)=\mathcal{O}(1)N\frac{1}{\sqrt{f_t(z)}}.
\end{equation}

\end{proof}

We will also need a result on the asymptotics of $\phi_N(z)$ and $\phi_N'(z)$ in the case where $Nt<c$, but $z$ is not on the contour, but in the sector between $(1-\epsilon,1+\epsilon)e^{\pm it}$. These asymptotics also follow directly from the analysis in \cite{ck}. More precisely,

\begin{lemma}\label{le:offcont}
For $Nt<c$, where $c$ is as in the previous theorem, and $z=re^{i\theta}$, where $r\in(1-\gamma,1+\gamma)$ and $\theta\in(-t,t)$, we have: for $N|1-|z||<1$

\begin{align}
\notag \Phi_N(z)&=\mathcal{O}(1)\frac{1}{\sqrt{f_t(z)}}N^{\beta}\max(|1-|z||^{\frac{\beta}{2}},(t-\theta)^{\frac{\beta}{2}})\max(|1-|z||^{\frac{\beta}{2}},(t+\theta)^{\frac{\beta}{2}})\\
&=\mathcal{O}(1) N^\beta
\end{align}

\noindent and 

\begin{align}
\notag \Phi_N'(z)&=\mathcal{O}(1)\frac{N^{\beta+1}}{\sqrt{f_t(z)}}\max(|1-|z||^{\frac{\beta}{2}},(t-\theta)^{\frac{\beta}{2}})\max(|1-|z||^{\frac{\beta}{2}},(t+\theta)^{\frac{\beta}{2}})\\
&=\mathcal{O}(1)N^{\beta+1}
\end{align}

For $N||z|-1|>1$,

\begin{align}
\notag \Phi_N(z)&=\mathcal{O}(1)\frac{1}{\sqrt{f_t(z)}}\\
&=\mathcal{O}(1)|1-|z||^{-\beta}
\end{align}

\noindent and 

\begin{align}
\notag \Phi_N'(z)&=\mathcal{O}(1)N\frac{1}{\sqrt{f_t(z)}}\\
&=\mathcal{O}(1) N|1-|z||^{-\beta}.
\end{align}

\noindent uniformly in $z$ and $t<cN^{-1}$.

\end{lemma}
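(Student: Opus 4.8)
The plan is to run the Riemann--Hilbert analysis of \cite{ck} exactly as in the proof of Theorem~\ref{th:singpolyasy2}, the only change being that $z$ now ranges over the full sector $\{re^{i\theta}:r\in(1-\gamma,1+\gamma),\ \theta\in(-t,t)\}$ rather than over the two rays $[1-\gamma,1+\gamma]\times e^{\pm it}$; we may take $\gamma$ small enough that this sector lies inside the fixed neighborhood of $e^{\pm it}$ on which the local parametrix of \cite{ck} is built. For such $z$ one still has $\Phi_N(z)=(\Upsilon(z)P(z))_{11}$ with $\Upsilon=I+\mathcal{O}(N^{-1})$, $\Upsilon'=\mathcal{O}(N^{-1})$ uniformly, and $P$ assembled from $\Psi(\zeta,s)$ with $s=-2iNt$, $\zeta=\tfrac1t\log z$, so that $z^{N/2}=e^{\frac{|s|}{4}\zeta}$. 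Since $Nt<c$ we are in the small-$s$ regime and section~6 of \cite{ck} gives $\Psi(\zeta,s)=H(\lambda,s)P_0(\lambda,s)$ for $|\lambda|$ small and $\Psi(\zeta,s)=H(\lambda,s)M(\lambda)$ otherwise, where $\lambda=\tfrac{|s|}{2}(\zeta-i)=N\log z-iNt$ and $H,H'=\mathcal{O}(1)$ uniformly off a jump contour. The one genuinely new feature relative to the on-ray computation is that $\lambda$ and $\lambda-s=N\log z+iNt$ are now complex: writing $z=re^{i\theta}$ one has the elementary estimates $|\lambda|\asymp N\max(|1-|z||,t-\theta)$, $|\lambda-s|\asymp N\max(|1-|z||,t+\theta)$ and $|z-e^{\pm it}|\asymp\max(|1-|z||,t\mp\theta)$, so (since $N(t\pm\theta)<2c$ automatically) the region $|\lambda|$ small is, up to a fixed $\mathcal{O}(1)$ factor, exactly $N|1-|z||<1$; as in Theorem~\ref{th:singpolyasy2} the resulting ambiguity in the threshold is immaterial because the two forms of the estimate agree on the overlap.

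In the region $N|1-|z||<1$ we insert $P_0(\lambda,s)=L(\lambda)\big(\begin{smallmatrix}1&c_0J(\lambda;s)\\0&1\end{smallmatrix}\big)\lambda^{\frac{\beta}{2}\sigma_3}(\lambda-s)^{\frac{\beta}{2}\sigma_3}e^{\beta\pi i\sigma_3}\widetilde{G}_3$; since only the first column of $\Psi(\zeta,s)e^{\frac{|s|}{4}\zeta\sigma_3}$ enters $\Phi_N$ and the unipotent and $\widetilde G_3$ factors do not affect it, the relevant entries have magnitude $\mathcal{O}(1)|\lambda|^{\beta/2}|\lambda-s|^{\beta/2}|e^{\frac{|s|}{4}\zeta}|$. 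Here $|e^{\frac{|s|}{4}\zeta}|=|z|^{N/2}=\mathcal{O}(1)$, so combining with the factor $1/\sqrt{f_t(z)}$ coming from $P(z)$ and using $|f_t(z)|^{-1/2}\asymp\max(|1-|z||,t-\theta)^{-\beta/2}\max(|1-|z||,t+\theta)^{-\beta/2}$ gives both the explicit formula and the bound $\mathcal{O}(1)N^\beta$. In the region $N|1-|z||>1$ we instead use \eqref{eq:masy}, i.e. $M(\lambda)e^{\frac12\lambda\sigma_3}=I+\mathcal{O}(\lambda^{-1})$; since $\tfrac12\lambda=\tfrac{|s|}{4}\zeta-\tfrac{|s|i}{4}$ this yields $\Psi(\zeta,s)e^{\frac{|s|}{4}\zeta\sigma_3}=H(\lambda,s)(I+\mathcal{O}(\lambda^{-1}))e^{\frac{|s|i}{4}\sigma_3}=\mathcal{O}(1)$, whence $\Phi_N(z)=\mathcal{O}(1)/\sqrt{f_t(z)}$; and because $|1-|z||>1/N>|t\pm\theta|$ for $c$ small, all the maxima are attained at $|1-|z||$, so this equals $\mathcal{O}(1)|1-|z||^{-\beta}$.

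For the derivatives one differentiates through $\tfrac{d}{dz}=\tfrac{1}{tz}\tfrac{d}{d\zeta}=\tfrac{N}{z}\tfrac{d}{d\lambda}$ exactly as in the proof of Theorem~\ref{th:singpolyasy2}: differentiating $\Upsilon$, $H$ and $L$ costs only $\mathcal{O}(1)$, while differentiating $\lambda^{\beta/2}(\lambda-s)^{\beta/2}$ produces $\tfrac{\beta}{2}(\lambda^{-1}+(\lambda-s)^{-1})$, whose poles at $z=e^{\pm it}$ cancel against the term $-\tfrac12\tfrac{f_t'(z)}{f_t(z)}=-\tfrac{\beta}{2}(\tfrac{1}{z-e^{it}}+\tfrac{1}{z-e^{-it}})+\mathcal{O}(1)$ (see \eqref{eq:fder}) coming from differentiating $1/\sqrt{f_t(z)}$, because $\tfrac{N}{z}\tfrac{\beta}{2}\lambda^{-1}=\tfrac{\beta}{2z\log(z/e^{it})}$ and $\log(z/e^{it})=(z-e^{it})e^{-it}(1+\mathcal{O}(z-e^{it}))$; this computation never used that $z$ lay on a ray. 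Hence differentiation contributes only an overall factor $N$ (together with a harmless $|1-|z||^{-1}<N$ when $N|1-|z||>1$), which yields the stated bounds, uniformly in $z$ in the sector and in $t<cN^{-1}$ since all of $\Upsilon,H,L$ and their derivatives are $\mathcal{O}(1)$ uniformly. I expect the main obstacle to be not any individual estimate but the bookkeeping: verifying that the small-$s$ parametrix of \cite{ck}, originally constructed with the spectral parameter on the unit circle, in fact controls $\Psi(\zeta,s)$ and $\partial_\zeta\Psi(\zeta,s)$ uniformly for $\zeta$ in a full complex neighborhood of the segment $[-i,i]$ rather than only near its endpoints, together with the correct handling of the branch cuts of $\lambda^{\beta/2}$, $(\lambda-s)^{\beta/2}$ and of the continuation of $f_t$ when $z$ is off the rays.
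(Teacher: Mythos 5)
Your proposal is correct and follows essentially the same route as the paper: it reuses the small-$s$ parametrix representation $\Psi=HP_0$ (resp. $HM$) from the proof of Theorem \ref{th:singpolyasy2}, observes that $\lambda=N\log z-iNt$ and $\lambda-s=N\log z+iNt$ give $|\lambda|^{\beta/2}|\lambda-s|^{\beta/2}=\mathcal{O}(1)N^\beta\max(|1-|z||^{\beta/2},(t-\theta)^{\beta/2})\max(|1-|z||^{\beta/2},(t+\theta)^{\beta/2})$ while $1/\sqrt{f_t(z)}$ contributes the matching minima, and notes that differentiation only costs a factor $N$ after the pole cancellation against $f_t'/f_t$. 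The uniformity caveats you flag (validity of the local parametrix in a full neighborhood of the segment and the branch-cut bookkeeping) are exactly the points the paper also relies on implicitly, so no substantive difference.
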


\begin{proof}
As in the previous proof, let us write $\zeta=\log z/t$ and $s=-2iNt$. We also consider for simplicity the $|z|<1$ case. Again, the relevant quantity is the first column of 

\begin{equation}
\frac{1}{\sqrt{f_t(z)}}\Psi(\zeta,s)e^{\frac{|s|}{4}\zeta\sigma_3}
\end{equation}

\noindent and the $z$-derivative of this. Let us consider first the case where $z$ is close to $e^{\pm it}$, or $N|1-|z||<1$. We find as before (with $\lambda=|s|(\zeta-i)/2$) that 

\begin{equation}
\Psi(\zeta,s)e^{\frac{|s|}{4}\zeta\sigma_3}=\begin{pmatrix}
\mathcal{O}(1)\lambda^{\frac{\beta}{2}}(\lambda-s)^{\frac{\beta}{2}} & *\\
\mathcal{O}(1)\lambda^{\frac{\beta}{2}}(\lambda-s)^{\frac{\beta}{2}} & *
\end{pmatrix}.
\end{equation}

We note that in our case, 

\begin{align}
\lambda&=\frac{|s|}{2}(\zeta-i)=N\log z-iNt
\end{align}

\noindent and 

\begin{equation}
\lambda-s=N\log z+iNt.
\end{equation}

We conclude that 

\begin{align}
\notag \lambda^{\frac{\beta}{2}}(\lambda-s)^{\frac{\beta}{2}}&=\mathcal{O}(1)N^{\beta}\max(|1-|z||^{\frac{\beta}{2}},(t-\theta)^{\frac{\beta}{2}})\max(|1-|z||^{\frac{\beta}{2}},(t+\theta)^{\frac{\beta}{2}}).
\end{align}

Noting that in our case

\begin{align}
\notag \frac{1}{\sqrt{f_t(z)}}&=\mathcal{O}(1)|z-e^{it}|^{-\frac{\beta}{2}}|z-e^{-it}|^{-\frac{\beta}{2}}\\
&=\mathcal{O}(1)\min(|1-|z||^{-\frac{\beta}{2}},(t-\theta)^{-\frac{\beta}{2}})\min(|1-|z||^{-\frac{\beta}{2}},(t+\theta)^{-\frac{\beta}{2}})
\end{align}

\noindent so our claim about the asymptotics of $\Phi_N$ is immediate.

\vspace{0.3cm}

For $N|1-|z||>1$, we have 

\begin{equation}
\Phi_N(z)=\mathcal{O}(1)\frac{1}{\sqrt{f_t(z)}}
\end{equation}

\vspace{0.3cm}

For the derivative in the $N|1-|z||<1$ case, we saw that 

\begin{align}
\notag& \frac{d}{dz}\frac{1}{\sqrt{f_t(z)}}\Psi(\zeta,s)e^{\frac{|s|}{4}\zeta\sigma_3}\\
&=\frac{1}{\sqrt{f_t(z)}}\begin{pmatrix}
\mathcal{O}(1)\lambda^{\frac{\beta}{2}}(\lambda-s)^{\frac{\beta}{2}}\left(-\frac{1}{2}\frac{f_t'(z)}{f_t(z)}+\frac{N}{z}\frac{\beta}{2}(\lambda^{-1}+(\lambda-s)^{-1})\right) & *\\
\mathcal{O}(1)\lambda^{\frac{\beta}{2}}(\lambda-s)^{\frac{\beta}{2}}\left(-\frac{1}{2}\frac{f_t'(z)}{f_t(z)}+\frac{N}{z}\frac{\beta}{2}(\lambda^{-1}+(\lambda-s)^{-1})\right) &*
\end{pmatrix}\\
&\notag +\frac{N}{\sqrt{f_t(z)}}\begin{pmatrix}
\mathcal{O}(1)\lambda^{\frac{\beta}{2}}(\lambda-s)^{\frac{\beta}{2}} & *\\
\mathcal{O}(1)\lambda^{\frac{\beta}{2}}(\lambda-s)^{\frac{\beta}{2}} & *
\end{pmatrix}.
\end{align}

As earlier, one sees that the second term is dominant and differentiating picks up a factor of $N$.

\vspace{0.3cm}

Consider now the $N||z|-1|>1$ situation. In this case we have as before 

\begin{equation}
\Phi_N'(z)=\mathcal{O}(1)N \frac{1}{\sqrt{f_t(z)}}.
\end{equation}

\end{proof}

\section{Asymptotics of the Fredholm determinant}

Our approach for analyzing the asymptotics of the Fredholm determinant is based on the following estimate (see \cite{simon} Theorem 6.5 - set $B=0$ and $A=K$).

\begin{theorem}[Simon]\label{th:simon}
For a trace class operator $K$,
\begin{equation}
|\det(I+K)e^{-\mathrm{tr}K}-1|\leq \Vert K\Vert_2 e^{\Gamma_2(\Vert K\Vert_2+1)^2}
\end{equation}

\noindent for a suitable positive constant $\Gamma_2$, where the determinant is a Fredholm determinant and $\Vert K\Vert_2$ denotes the Hilbert-Schmidt norm of $K$.
\end{theorem}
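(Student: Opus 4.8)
Since Theorem~\ref{th:simon} is quoted verbatim from Simon's monograph, in the paper one would simply cite it; but here is how I would reconstruct the proof. The statement is the standard Lipschitz-type bound for the $2$-regularized Fredholm determinant. The first observation is that for trace class $K$ one has $\det(I+K)e^{-\mathrm{tr}K}=\det\nolimits_2(I+K)$, where $\det\nolimits_2(I+A):=\det((I+A)e^{-A})$ is the $2$-regularized determinant; this is defined for every Hilbert--Schmidt operator $A$ and is continuous on the Hilbert--Schmidt ideal with respect to $\Vert\cdot\Vert_2$. It therefore suffices to prove $|\det\nolimits_2(I+A)-1|\le\Vert A\Vert_2\, e^{\Gamma_2(\Vert A\Vert_2+1)^2}$ for all Hilbert--Schmidt $A$ (the trace class case, together with the identification with $\det(I+K)e^{-\mathrm{tr}K}$, then follows by specialization). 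Since finite rank operators are $\Vert\cdot\Vert_2$-dense and both sides of the claimed inequality are continuous in $\Vert\cdot\Vert_2$, I would reduce further to $A$ of finite rank.

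For finite rank $A$ I would use the product formula $\det\nolimits_2(I+A)=\prod_j (1+\lambda_j)e^{-\lambda_j}$ over the nonzero eigenvalues $\lambda_j$ of $A$, together with Weyl's inequality $\sum_j|\lambda_j|^2\le\Vert A\Vert_2^2$. Writing $a_j=(1+\lambda_j)e^{-\lambda_j}$, the two elementary scalar estimates needed are $|a_j|\le e^{|\lambda_j|^2/2}$ and $|a_j-1|\le\tfrac12|\lambda_j|^2e^{|\lambda_j|}$, both read off from the power series $(1+\lambda)e^{-\lambda}-1=\sum_{m\ge2}\frac{(-1)^{m-1}(m-1)}{m!}\lambda^m$, whose lowest-order term is $\tfrac12\lambda^2$. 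Telescoping the product as $\prod_j a_j-1=\sum_k(a_1\cdots a_{k-1})(a_k-1)$ and using $|\lambda_k|\le\Vert A\Vert_2$, one gets
\[
|\det\nolimits_2(I+A)-1|\le\sum_k e^{\frac12\sum_{j<k}|\lambda_j|^2}\,|a_k-1|\le e^{\frac12\Vert A\Vert_2^2}\cdot\tfrac12 e^{\Vert A\Vert_2}\sum_k|\lambda_k|^2\le\tfrac12\Vert A\Vert_2^2\,e^{\frac12\Vert A\Vert_2^2+\Vert A\Vert_2}.
\]
A crude estimate (for instance $\tfrac12\Vert A\Vert_2\le e^{\Vert A\Vert_2}$ and $\tfrac12\Vert A\Vert_2^2+\Vert A\Vert_2\le(\Vert A\Vert_2+1)^2$) then rewrites the right-hand side as $\Vert A\Vert_2\, e^{\Gamma_2(\Vert A\Vert_2+1)^2}$ with, say, $\Gamma_2=2$, completing the finite rank case and hence the theorem.

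An alternative that sidesteps the eigenvalue and Lidskii machinery is to exploit analyticity: $F(z):=\det\nolimits_2(I+zA)$ is entire with $F(0)=1$ and $F'(0)=0$ (the vanishing of the linear term is precisely what the $2$-regularization buys, visible from $(I+zA)e^{-zA}=I+\mathcal{O}(z^2)$), so $F(z)-1=z^2H(z)$ for an entire $H$; combining Carleman's inequality $|F(z)|\le e^{|z|^2\Vert A\Vert_2^2/2}$ with the maximum principle applied to $H$ on a disc of radius $R\ge1$ (choosing $R\sim\Vert A\Vert_2^{-1}$ when $\Vert A\Vert_2$ is small and $R=1$ otherwise) yields $|F(1)-1|=|H(1)|\lesssim\Vert A\Vert_2^2\,e^{c\Vert A\Vert_2^2}$, which is again of the claimed form. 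Either way, the only genuine inputs are Carleman's inequality $|\det\nolimits_2(I+A)|\le e^{\Vert A\Vert_2^2/2}$ and the well-definedness and $\Vert\cdot\Vert_2$-continuity of $\det\nolimits_2$; the one mildly tedious point is the constant bookkeeping needed to reach the precise form $\Vert A\Vert_2\,e^{\Gamma_2(\Vert A\Vert_2+1)^2}$ rather than something like $\Vert A\Vert_2^2\,e^{c\Vert A\Vert_2^2}$, and I would not expect any real obstacle beyond that.
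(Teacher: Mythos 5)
Your proposal is essentially correct, but note that the paper does not prove this statement at all: it is quoted directly from Simon's article (Theorem 6.5 there, with $B=0$, $A=K$), so the paper's ``proof'' is a citation. Of your two reconstructions, the second (analyticity of $F(z)=\det\nolimits_2(I+zA)$, Carleman's bound, and a Cauchy/maximum-principle estimate) is essentially Simon's own argument, which in fact proves the stronger Lipschitz form $|\det\nolimits_2(I+A)-\det\nolimits_2(I+B)|\le\Vert A-B\Vert_2\exp\left(\Gamma_2(\Vert A\Vert_2+\Vert B\Vert_2+1)^2\right)$; the first (eigenvalue product, Weyl's inequality, the scalar bounds $|(1+\lambda)e^{-\lambda}|\le e^{|\lambda|^2/2}$ and $|(1+\lambda)e^{-\lambda}-1|\le\tfrac12|\lambda|^2e^{|\lambda|}$, and telescoping) is a correct and more elementary alternative for the case actually used in the paper. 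The computations check out (the lowest-order term of $(1+\lambda)e^{-\lambda}-1$ is $-\tfrac12\lambda^2$, not $+\tfrac12\lambda^2$, but only its modulus matters), and the constant bookkeeping to reach $\Vert A\Vert_2 e^{\Gamma_2(\Vert A\Vert_2+1)^2}$ is fine since $\Gamma_2$ is unspecified.

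One caveat in route one: reducing to finite rank via ``both sides are $\Vert\cdot\Vert_2$-continuous'' is mildly circular, because the standard proof of $\Vert\cdot\Vert_2$-continuity of $\det\nolimits_2$ is precisely the Lipschitz bound you are trying to prove (one can prove continuity independently, e.g.\ via $A\mapsto(I+A)e^{-A}-I$ being continuous from the Hilbert--Schmidt class into the trace class, but that is extra work you did not supply). The cleaner fix is to skip the reduction entirely: for trace class $K$, Lidskii's theorem gives $\det(I+K)e^{-\mathrm{tr}K}=\prod_j(1+\lambda_j)e^{-\lambda_j}$ directly, Weyl's inequality gives $\sum_j|\lambda_j|^2\le\Vert K\Vert_2^2$, and your telescoping estimate then applies verbatim (the partial-product bounds are uniform, so passing to the infinite product is immediate). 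With that adjustment the argument is complete and, for the trace class statement the paper needs, arguably simpler than the cited proof.
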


Thus to estimate $\det(I+K)$, we must be able to estimate $||K||_2$ and $\mathrm{tr}K$ (in fact we'll conjugate $K$ by a certain multiplication operator - the trace and the Fredholm determinant remain invariant, but the HS-norm does not). Let us do this in the three different cases.

\subsection{No singularities} We now make use of the asymptotics of the orthogonal polynomials from Theorem \ref{th:polyasynosing} to estimate the asymptotics of $K$ in the case where the symbol has no singularities. This was essentially done already in \cite{bl}, but for completeness (and to make sure of the relevant uniformity) we give an argument here.

\begin{lemma}\label{le:nsdet}
Consider a Laurent polynomial $V$ of the form 

\begin{equation}
V(z)=\sum_{j=0}^p \frac{1}{2}(a_j z^j+\overline{a_j}z^{-j}).
\end{equation}

Also fix some $R>0$ and write $A_R=\lbrace \max_j|a_j|\leq R\rbrace$. Then there exists a $\alpha>0$ such that as $N,M\to\infty$ (such that $M\geq N$),

\begin{equation}
\det(I+K)=\mathcal{O}(e^{-\alpha(M-N)})
\end{equation}

\noindent uniformly on $A_R$.

\end{lemma}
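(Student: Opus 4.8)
The plan is to bound $\det(I+K)$ via Simon's estimate (Theorem~\ref{th:simon}), applied to the finite-rank operator $K$ in the convenient form of the $N\times N$ matrix $AB$ from the proof of Proposition~\ref{prop:detsing}, for which $\det(I+K)=\det(I+AB)$ and $\mathrm{tr}\,K=\mathrm{tr}(AB)$, and for which the Hilbert--Schmidt norm appearing in Theorem~\ref{th:simon} is simply the Frobenius norm $\|AB\|_2$. In the non-singular case $f=e^V$ is zero-free and analytic in a neighbourhood of $\T$, so the contour reduces to $(1-\epsilon)\T\cup(1+\epsilon)\T$ --- the ray segments of the $\Gamma_j$ carry no branch cut and cancel in pairs (cf.~\cite{bl} and Remark~\ref{re:contour}) --- and
\begin{equation}
(AB)_{l,m}=\left(\int_{(1+\epsilon)\T}-\int_{(1-\epsilon)\T}\right)\overline{\phi}_l(z^{-1})\,\phi_m(z)\,v(z)f(z)\,\frac{dz}{2\pi iz},
\end{equation}
with $v$ as in Proposition~\ref{prop:detsing} and $\phi_j$ the orthonormal polynomials for $e^V$; we keep the Christoffel--Darboux kernel in its sum form $\sum_{j=0}^{N-1}\phi_j(w)\overline{\phi}_j(z^{-1})$ (already built into the $AB$ representation), which avoids the apparent pole at $w=z$.

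The core of the argument is a pair of elementary pointwise bounds, uniform on $A_R$. From the definition of $v$, $|v(z)|\ll(1-\epsilon)^M$ on $|z|=1-\epsilon$ and $|v(z)|\ll(1+\epsilon)^{-M}$ on $|z|=1+\epsilon$, while $f=e^V$ is bounded on these two circles since $V$ is. From Theorem~\ref{th:polyasynosing}, applied with the degree $N$ replaced by each $j\le N$ and using that its $\mathcal{O}(1)$ constants are uniform in $j$ and on $A_R$ (the finitely many small $j$ contribute a fixed family of polynomials, bounded on the compact circles), one gets $|\phi_j(z)|\le C(1+\epsilon)^j$ and $|\overline{\phi}_j(z^{-1})|\le Ce^{-cj}$ on $|z|=1+\epsilon$, and $|\phi_j(z)|\le Ce^{-cj}$ and $|\overline{\phi}_j(z^{-1})|\le C(1-\epsilon)^{-j}$ on $|z|=1-\epsilon$. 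Substituting these into the contour integral yields
\begin{equation}
|(AB)_{l,m}|\le C\bigl(e^{-cl}(1+\epsilon)^{m-M}+(1-\epsilon)^{M-l}e^{-cm}\bigr).
\end{equation}

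Squaring and summing over $l,m\in\{0,\dots,N-1\}$, the geometric sums $\sum_{m<N}(1+\epsilon)^{2m}\ll(1+\epsilon)^{2N}$ and $\sum_{l<N}(1-\epsilon)^{-2l}\ll(1-\epsilon)^{-2N}$ give $\|AB\|_2^2\le C\bigl((1+\epsilon)^{2(N-M)}+(1-\epsilon)^{2(M-N)}\bigr)$, hence $\|AB\|_2=\mathcal{O}(e^{-\alpha(M-N)})$ with $\alpha=\log(1+\epsilon)>0$ (using $\log(1+\epsilon)\le-\log(1-\epsilon)$), uniformly on $A_R$; the diagonal entries obey the same bound, so $\mathrm{tr}(AB)=\mathcal{O}(e^{-\alpha(M-N)})$ as well. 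Feeding both estimates into Theorem~\ref{th:simon} then yields the claimed bound $\det(I+K)=\mathcal{O}(e^{-\alpha(M-N)})$, uniformly on $A_R$.

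The one genuinely delicate point is the uniformity imported from Theorem~\ref{th:polyasynosing} --- uniformity of its $\mathcal{O}(1)$ constants both in the polynomial degree $j\le N$ and over the coefficient set $A_R$ --- which is exactly what the Neumann-series representation of $S$ underlying that theorem provides once one has the corresponding uniform control of the jump matrices (the point already stressed in the proof of Theorem~\ref{th:polyasynosing}). Everything else --- the deformation onto the two circles, the two geometric series, and the invocation of Simon's estimate --- is routine.
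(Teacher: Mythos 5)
Your proof is correct, and it reaches the same two quantities the paper feeds into Theorem \ref{th:simon} (the trace and the Hilbert--Schmidt norm, both $\mathcal{O}(e^{-\alpha(M-N)})$ uniformly on $A_R$), but by a genuinely different computation. The paper stays with the operator $K=BA$ on $\oplus_j L^2(\Gamma_j,\sigma)$, conjugates by the multiplication operator $z^{-N/2}$, and estimates the Christoffel--Darboux kernel pointwise on the two circles; this forces it to handle the near-diagonal $z\approx w$ by a mean-value argument and therefore to invoke the derivative asymptotics $\phi_N'$ from Theorem \ref{th:polyasynosing}, but it only ever needs the degree-$N$ polynomial. You instead bound the entries of the $N\times N$ matrix $AB$ directly (after the same ray cancellation of Remark \ref{re:contour}), which eliminates the CD kernel, the conjugation, and the derivative estimates altogether, at the price of needing bounds on $\phi_j$ and $\overline{\phi}_j$ for \emph{all} degrees $j\le N$, uniformly in $j$ and on $A_R$; your justification of this is sound, since the weight $e^V$ is fixed, the uniform asymptotics of Theorem \ref{th:polyasynosing} apply verbatim to every large degree (and at the auxiliary radii $(1\pm\epsilon)^{-1}$), and the finitely many small degrees are controlled by continuity of $\phi_j$ in the coefficients on the compact set $A_R$. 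Two small points. First, for the trace you should actually perform the sum over the diagonal of your entrywise bound (the factors $e^{-cl}$, resp.\ the geometric factors, make $\sum_{l<N}e^{-cl}(1+\epsilon)^{l-M}+\sum_{l<N}e^{-cl}(1-\epsilon)^{M-l}=\mathcal{O}(e^{-\alpha(M-N)})$); a single diagonal entry being $\mathcal{O}(e^{-\alpha(M-N)})$ would not suffice on its own, since there are $N$ of them. Second, what Theorem \ref{th:simon} delivers from your estimates is $\det(I+K)=1+\mathcal{O}(e^{-\alpha(M-N)})$, not $\mathcal{O}(e^{-\alpha(M-N)})$; this is also what the paper's proof produces and what is used later (e.g.\ in the proofs of Propositions \ref{prop:linstat} and \ref{prop:fh}), the displayed equality in the lemma statement being evidently a typo, so your last line should be stated in the form $1+\mathcal{O}(e^{-\alpha(M-N)})$.
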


\begin{proof}
As noted in Remark \ref{re:contour}, as the symbol is analytic in $\C\setminus \lbrace 0\rbrace$, the integrals across the rays will cancel and we only need to worry about the behavior of the kernel on the circles $(1\pm \epsilon)\times \T$. 

\vspace{0.3cm}

We also note that as the Fredholm determinant is invariant under conjugations (i.e. for invertible $X$, $\det(X(I+K)X^{-1})=\det(I+K)$), we can replace $K$ by $M_N K M_N^{-1}$, where $M_N:L^2(\cup_j\Gamma_j,dz/2\pi i z)\to L^2(\cup_j\Gamma_j,dz/2\pi i z)$, $(M_N h)(z)=z^{-N/2} h(z)$. This effectively symmetrizes the operator $K$. The kernel of this operator (which still is trace class) is $(z/w)^{N/2}K(z,w)$.

\vspace{0.3cm}

Consider first the case $|z|=1-\epsilon$, $|w|=1+\epsilon$ (or vice versa). We have by Theorem \ref{th:polyasynosing} (uniformly)

\begin{align}
\notag\left(\frac{z}{w}\right)^{\frac{N}{2}}K_{CD}(z,w)&=\frac{\left(\frac{w}{z}\right)^{\frac{N}{2}}\phi_N(z)\overline{\phi}_N(w^{-1})-\left(\frac{z}{w}\right)^{\frac{N}{2}}\phi_N(w)\overline{\phi}_N(z^{-1})}{1-\frac{w}{z}}\\
&=\mathcal{O}(1)(1+\epsilon)^{\frac{N}{2}}(1-\epsilon)^{-\frac{N}{2}}.
\end{align}

Thus 

\begin{equation}
K(z,w)=\mathcal{O}(1)(1+\epsilon)^{-\frac{M}{2}}(1-\epsilon)^{\frac{M}{2}}(1+\epsilon)^{\frac{N}{2}}(1-\epsilon)^{-\frac{N}{2}}=\mathcal{O}(e^{-\alpha(M-N)})
\end{equation}

\noindent for a suitable $\alpha>0$.

\vspace{0.3cm}

Consider then the case $|z|=|w|=1\pm\epsilon$. In this case, we write

\begin{align}
\notag\left(\frac{z}{w}\right)^{\frac{N}{2}}K_{CD}(z,w)&=\frac{\left(\frac{w}{z}\right)^{\frac{N}{2}}\phi_N(z)\overline{\phi}_N(w^{-1})-\left(\frac{z}{w}\right)^{\frac{N}{2}}\phi_N(w)\overline{\phi}_N(z^{-1})}{1-\frac{w}{z}}\\
&=z^{1-\frac{N}{2}}w^{-\frac{N}{2}}\phi_N(z)\frac{w^{N}\overline{\phi}_N(w^{-1})-z^{N}\overline{\phi}_N(z^{-1})}{z-w}\\
\notag &+z^{1+\frac{N}{2}}w^{-\frac{N}{2}}\overline{\phi}_N(z^{-1})\frac{\phi_N(z)-\phi_N(w)}{z-w}.
\end{align}

We then note that for a nice enough function $h$, 

\begin{equation}
\frac{h(z)-h(w)}{z-w}=h'(\zeta)
\end{equation}

\noindent for some point $\zeta$ with $|\zeta|=|z|(=|w|)$ so we see that

\begin{align}
\notag \left(\frac{z}{w}\right)^{\frac{N}{2}}K_{CD}(z,w)&=\mathcal{O}(1)N\phi_N(z)\overline{\phi}_N(\zeta^{-1})+\mathcal{O}(1)\phi_N(z)\overline{\phi}_N'(\zeta^{-1})\\
&+\mathcal{O}(1)\overline{\phi}_N(z^{-1})\phi_N'(\xi)
\end{align}

\noindent for some points $\zeta,\xi$ with $|\zeta|=|\xi|=|z|$. In each of the terms above, at least one of the two points is on the circle $(1-\epsilon)\T$ giving exponential smallness. This exponential smallness cancels the factor of $N$, so we see that all of these terms can be bounded (uniformly) by $\mathcal{O}(1)\max(|z|^N,|z|^{-N})$. We conclude that for some $\alpha>0$.

\begin{equation}
\left(\frac{z}{w}\right)^{\frac{N}{2}}K(z,w)=\mathcal{O}(1)\min(|z|^M,|z|^{-M})\max(|z|^N,|z|^{-N})=\mathcal{O}(e^{-\alpha(M-N)})
\end{equation}

\noindent uniformly in $z$ and $w$ as well as on $A_R$.

\vspace{0.3cm}

These estimates for the kernel imply that 

\begin{equation}
\mathrm{tr} K,||M_NKM_N^{-1}||_2=\mathcal{O}(e^{-\alpha(M-N)})
\end{equation}

\noindent uniformly on $A_R$. Making use of Theorem \ref{th:simon}, this gives the desired result.

\vspace{0.3cm}

\end{proof}

\subsection{Distance between singularities bounded away from zero}

Making use of Theorem \ref{th:singpolyasy1}, we try to mimic the proof of the non-singular case to analyze the asymptotics of $\det(I+K)$ in the case when the distance between the singularities is bounded from below.

\begin{lemma}\label{le:sdet}
Let $V:\C\setminus \lbrace 0\rbrace\to \C$ be a Laurent polynomial of the form

\begin{equation}
V(z)=\sum_{j=0}^p\frac{1}{2} (a_j z^j+\overline{a_j}z^{-j}),
\end{equation}

\noindent $(w_1,...,w_k)$ be distinct points on the unit circle such that $0\leq \arg(w_1)<\arg(w_2)<...<\arg(w_k)<2\pi$, and assume that there exists some fixed $\delta>0$ such that $|w_i-w_j|>\delta$ for $i\neq j$. For $|z|=1$, let

\begin{equation}
f(z)=e^{V(z)}\prod_{j=1}^k|z-w_j|^{\beta_j},
\end{equation}

\noindent where $\beta_j>0$, and for any fixed $R>0$, $A_R=\lbrace \max_j |a_j|<R\rbrace$. Assume further that there exists a $q\in(0,1)$ such that for large enough $N$, $N/M<q$.

\vspace{0.3cm}

In this case,

\begin{equation}
\det(I+K)=1+\mathcal{O}(1)\frac{N}{M}
\end{equation}

\noindent uniformly on $A_R$ and uniformly on $\lbrace \min_{i\neq j}|w_i-w_j|>\delta\rbrace$.

\end{lemma}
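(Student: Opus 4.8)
The plan is to run the argument of Lemma \ref{le:nsdet}: write $K=BA$ with $A,B$ as in the proof of Proposition \ref{prop:detsing}, conjugate by the multiplication operator $M_N h=z^{-N/2}h$, set $\widetilde B=M_N B$ and $\widetilde A=AM_N^{-1}$, and use
\[
\det(I+K)=\det(I+\widetilde B\widetilde A),\qquad |\det(I+\widetilde B\widetilde A)-1|\le \Vert\widetilde B\widetilde A\Vert_1\,e^{1+\Vert\widetilde B\widetilde A\Vert_1},\qquad \Vert\widetilde B\widetilde A\Vert_1\le\Vert\widetilde B\Vert_2\Vert\widetilde A\Vert_2,
\]
the first being the standard trace–norm bound for Fredholm determinants (equivalently Theorem \ref{th:simon} together with $\Vert\cdot\Vert_2\le\Vert\cdot\Vert_1$ and $|\mathrm{tr}|\le\Vert\cdot\Vert_1$). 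Since $\widetilde B$ and $\widetilde A$ are finite rank, $\Vert\widetilde B\Vert_2^2=\sum_{k=0}^{N-1}\Vert z^{-N/2}\sqrt{vf}\,\phi_k\Vert_{L^2(\sigma)}^2$ and $\Vert\widetilde A\Vert_2^2=\sum_{k=0}^{N-1}\Vert z^{N/2}\sqrt{vf}\,\overline\phi_k(\cdot^{-1})\Vert_{L^2(\sigma)}^2$, so it suffices to prove that both are $\mathcal{O}(N/M)$, uniformly on $A_R$ and on $B_\delta=\{\min_{i\neq j}|w_i-w_j|>\delta\}$; note that only $|f|$ (which is continuous across the cuts) enters these norms. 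Fix $\epsilon\in(0,\delta/6)$ small once and for all, and split each $\Gamma_j$ into its two circular arcs (radii $1\pm\epsilon$) and its two radial segments.

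On the circular arcs the estimate is exactly that of Lemma \ref{le:nsdet}. There $|v(z)|=\mathcal{O}((1-\epsilon)^M)$ on $(1-\epsilon)\T$ and $\mathcal{O}((1+\epsilon)^{-M})$ on $(1+\epsilon)\T$, the weight $|f|$ is bounded above and below (it is real–analytic on the arcs and, $\epsilon$ being fixed, stays comparable to $\epsilon^{\beta_j}$ near the crossing points $(1\pm\epsilon)w_j$, where part 3) of Theorem \ref{th:singpolyasy1} gives $\sqrt{|f|}\,|\phi_k|=\mathcal{O}(1)$), and by parts 1)–2) of Theorem \ref{th:singpolyasy1} one has $|\phi_k(z)|\le C(1+\epsilon)^k$ and $|\overline\phi_k(z^{-1})|\le C(1+\epsilon)^{-k}$ on $(1-\epsilon)\T$ with the reversed bounds on $(1+\epsilon)\T$, uniformly in $k\le N$ and in the parameters. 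Combining these powers of $1\pm\epsilon$ with the compensating $|z|^{\mp N}$ coming from the conjugation, every term of $\Vert\widetilde B\Vert_2^2$ and $\Vert\widetilde A\Vert_2^2$ produced by a circular arc is $\mathcal{O}(e^{-c(M-N)})$, whence the total circular–arc contribution is $\mathcal{O}\big(Ne^{-c(M-N)}\big)=o(N/M)$ since $M-N\ge(1-q)M\to\infty$.

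The substance is the radial segments, where the branch cut of $f$ destroys the cancellation of the non-singular case. On the segment $z=sw_j$, $|s-1|<\epsilon$, two facts are used. First, $v$ has at most a simple pole at $w_j$ (occurring exactly when $w_j\in\mathcal{D}_M$), so $|z|^{\mp N}|v(z)|\le C/(M|1-s|)$, and, crucially, since $M\ge N$ and $\epsilon$ is small, $|z|^{\mp N}|v(z)|\le C\,e^{-c(M-N)|1-s|}$ for $|1-s|\ge 1/N$ (here one uses $(1\mp u)^{M-N}\le e^{-c(M-N)u}$ and that $M/N>1/q$ keeps the harmonic-series factor $1/(1-|z|^{\pm M})$ bounded on $|1-s|\ge1/N$); consequently $\int_{1/N\le|1-s|<\epsilon}|z|^{\mp N}|v(z)|\,ds=\mathcal{O}(1/M)$, with no logarithm because the cut-off is at $1/N$ and $M/N$ is bounded below. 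Second, part 3) of Theorem \ref{th:singpolyasy1}, applied to $\phi_k$ on $[1-\epsilon,1+\epsilon]w_j$ and, for $\overline\phi_k(z^{-1})=\overline{\phi_k(1/\bar z)}$, on the reflected segment (using $|f(1/\bar z)|\asymp|f(z)|\asymp|z-w_j|^{\beta_j}$ near $w_j$), gives $\sqrt{|f(z)|}\,|\phi_k(z)|$ and $\sqrt{|f(z)|}\,|\overline\phi_k(z^{-1})|$ both $\mathcal{O}\!\big(\min(1,(k|1-s|)^{\beta_j/2})\big)$ uniformly on the segment and in the parameters (for bounded $k$ this is trivial, for $k\to\infty$ it is the theorem with $k$ in place of $N$). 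Hence, writing $u=|1-s|$,
\[
\Vert z^{\mp N/2}\sqrt{vf}\,\phi_k\Vert_{L^2(\sigma),\,\mathrm{segment}}^2\;\lesssim\;\int_{0}^{1/N}\frac{(ku)^{\beta_j}}{Mu}\,du\;+\;\int_{1/N}^{\epsilon}|z|^{\mp N}|v|\,du\;=\;\mathcal{O}(1/M),
\]
using $\beta_j>0$ and $k\le N$ for the first integral, and the same for the $\overline\phi_k(\cdot^{-1})$ terms; summing over $k=0,\dots,N-1$ and over the finitely many segments yields $\Vert\widetilde B\Vert_2^2,\Vert\widetilde A\Vert_2^2=\mathcal{O}(N/M)$ uniformly on $A_R$ and $B_\delta$, which completes the proof.

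The main obstacle is precisely this radial–segment estimate, and within it the point that no logarithmic loss survives: a crude bound on $\int|v|$ over a whole segment diverges logarithmically, and it is only the combination of $\beta_j>0$ (which makes $vf$ integrable on the scale $|1-s|<1/N$, where $\sqrt{|f|}\,\phi_k$ is genuinely small) with the hypothesis $M\ge N$ sharpened to $N/M<q<1$ (which keeps $e^{-M/N}$ bounded away from $1$, forcing $\int_{1/N}^{\epsilon}|z|^{\mp N}|v|=\mathcal{O}(1/M)$, and also lets one choose $\epsilon$ small enough that the conjugation factors $|z|^{\mp N}$ are absorbed by the compensating $|z|^{\pm M}$ in $v$) that produces the clean rate $\mathcal{O}(N/M)$. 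Uniformity in $V$ (on $A_R$) and in the singularity configuration (on $B_\delta$) is inherited verbatim from the uniform polynomial estimates of Theorem \ref{th:singpolyasy1}, the bounds on $v$ and $\int|vf|^{1/2}\sigma$ depending only on $\epsilon$ and the $\beta_j$.
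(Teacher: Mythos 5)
Your proposal is correct, but it takes a genuinely different route from the paper's. The paper conjugates $K$ by $M_N$ and estimates the kernel pointwise via the Christoffel--Darboux identity, which forces it to control $\phi_N$, $\overline{\phi}_N$ \emph{and their derivatives} (the derivative asymptotics are the expensive part of Theorem \ref{th:singpolyasy1}) in a long list of short/long-range regimes, and then to bound $\mathrm{tr}\,K$ and $\Vert M_NKM_N^{-1}\Vert_2$ separately before invoking Theorem \ref{th:simon}. You instead exploit the finite-rank factorization $K=BA$ from Proposition \ref{prop:detsing}: bounding $\Vert M_NB\Vert_2$ and $\Vert AM_N^{-1}\Vert_2$ individually yields a trace-norm bound $\mathcal{O}(N/M)$ for the conjugated operator, and the determinant estimate follows in one step from the standard trace-norm inequality --- no Christoffel--Darboux identity, no derivative estimates, no separate trace computation. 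The price is that you need bounds on $\phi_k$ and $\overline{\phi}_k$ for \emph{every} degree $k\le N-1$, not just $k=N$; this is legitimate (apply Theorem \ref{th:singpolyasy1} with $k$ in place of $N$, uniformly for $k\ge k_0$ by the stated uniformity on $A_R$ and $B_\delta$, and handle the finitely many small $k$ by compactness of the parameter set, retaining the factor $f(z)=\mathcal{O}(|1-|z||^{\beta_j})$ to tame the possible simple pole of $v$ at $w_j$), but it deserves more justification than ``trivial''. Your segment estimate is exactly where the hypotheses enter in the paper too: $\beta_j>0$ controls the scale $|1-|z||<N^{-1}$, and $N/M<q<1$ gives $\int_{1/N}^{\epsilon}|z|^{\mp N}|v|\,du=\mathcal{O}(1/(M-N))=\mathcal{O}(1/M)$. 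One slip to correct: on the circular arcs you state the polynomial bounds backwards --- on $(1-\epsilon)\T$ it is $|\overline{\phi}_k(z^{-1})|$ that grows like $(1-\epsilon)^{-k}$ (since $z^{-1}$ lies outside the disk) while $|\phi_k|=\mathcal{O}(1)$, and on $(1+\epsilon)\T$ it is $|\phi_k|$ that grows like $(1+\epsilon)^{k}$ while $|\overline{\phi}_k(z^{-1})|=\mathcal{O}(1)$. With the correct bounds from parts 1)--2) each of the four arc integrands is still of size $(1\mp\epsilon)^{\pm(M-N)}$ up to constants, because $|v|$ and the conjugation factor together absorb the growth for $k\le N-1$, so your conclusion $\mathcal{O}(Ne^{-c(M-N)})=\mathit{o}(N/M)$ survives; but as written that sentence is not accurate and should be fixed.
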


\begin{proof}
As in the non-singular case, let us conjugate $K$ by the multiplication operator $M_N$. We also start with the case $|z|< 1-\delta/6$ and $|w|>1+\delta/6$ (or vice versa). Making use of Theorem \ref{th:singpolyasy1}, we have (uniformly in everything relevant)

\begin{equation}
\left(\frac{z}{w}\right)^{\frac{N}{2}}K_{CD}(z,w)=\mathcal{O}(1)\max(|z|^{\frac{N}{2}},|z|^{-\frac{N}{2}})\times\max(|w|^{\frac{N}{2}},|w|^{-\frac{N}{2}})
\end{equation}

\noindent and for some $\alpha>0$

\begin{equation}\label{eq:hsest0}
\left(\frac{z}{w}\right)^{\frac{N}{2}}K(z,w)=\mathcal{O}(e^{-\alpha(M-N)}).
\end{equation}

The rest of our analysis we split into two cases: $|z-w|<N^{-1}/2$ and $|z-w|>N^{-1}/2$ (we call these the short and long range regimes). The point of this is that for $|z-w|<N^{-1}/2$, we replace difference quotients by derivatives and for any point $\zeta$ with $|z-\zeta|=\mathcal{O}(N^{-1})$, $\zeta^N=\mathcal{O}(1)z^N$. For $|z-w|>N^{-1}/2$, we simply replace the denominator $1-z/w$ in the CD-kernel by $N^{-1}$.

\vspace{0.3cm}

\bf The short range regime. \rm As in the non-singular case, we can write 

\begin{align}
\notag \left(\frac{z}{w}\right)^{\frac{N}{2}}K_{CD}(z,w)&=\mathcal{O}(1)N\phi_N(z)\overline{\phi}_N(\zeta^{-1})+\mathcal{O}(1)\phi_N(z)\overline{\phi}_N'(\zeta^{-1})\\
&+\mathcal{O}(1)\overline{\phi}_N(z^{-1})\phi_N'(\xi)
\end{align}

\noindent for some points $\xi,\zeta$ between $z$ and $w$ on the contour (here we made use of the fact that for $|z-w|=\mathcal{O}(N^{-1})$, $|\zeta|^N,|\xi|^N,|w|^N=\mathcal{O}(1)|z|^N$ uniformly in everything).

\vspace{0.3cm}

If $|z|,|w|<1-\delta/6$ or $|z|,|w|>1+\delta/6$, then by Theorem \ref{th:singpolyasy1}, we see that reasoning similarly to the non-singular case, we obtain the uniform bound 

\begin{align}
\notag \left(\frac{z}{w}\right)^{\frac{N}{2}}K_{CD}(z,w)&=\mathcal{O}(1)\max(|z|^N,|z|^{-N})\\
&=\mathcal{O}(1)\max(|z|^{\frac{N}{2}},|z|^{-\frac{N}{2}})\times\max(|w|^{\frac{N}{2}},|w|^{-\frac{N}{2}}).
\end{align}

This then implies that for some $\alpha>0$, we have uniformly in everything

\begin{equation}\label{eq:trest1}
K(z,w)=\mathcal{O}(e^{-\alpha(M-N)}).
\end{equation}

As the distance between the singularities is bounded away from zero, this only leaves the case $z,w\in[1-\delta/6,1+\delta/6]\times w_j$ for some $j$ (the case where one of the points is near the end of the interval and one right next to the interval can be treated as the case where neither is in the interval by retuning the parameters). Here we need to note that as $|z|\to 1$, we don't get exponential smallness from $v(z)$. In fact, in the worst case scenario, where $w_j\in\mathcal{D}_M$ (i.e. $w_j^M=1$), $v(w_j)=\infty$. We assume the worst case scenario is the generic one and we use the following estimate for $v$:

\begin{equation}
v(z)=\begin{cases}
\mathcal{O}(1)\min(|z|^M,|z|^{-M}), & |1-|z||>M^{-1}\\
\mathcal{O}(1)\frac{1}{M|1-|z||}, & |1-|z||<M^{-1}.
\end{cases}
\end{equation}

Due to this change of behavior at a scale of $M^{-1}$, we need to split our treatment further into several subcases. We also will need to make use of the fact that $f(z)=\mathcal{O}(1)|1-|z||^{\beta_j}$ is present in the kernel.

\vspace{0.3cm}

\underline{$|1-|z||,|1-|w||>N^{-1}$:} In this case, using Theorem \ref{th:singpolyasy1}, we have (note that in this case, we have for example $|1-|\zeta||^{-1}=|1-|w|+|w|-|\zeta||^{-1}=\mathcal{O}(1)|1-|w||^{-1}$) 

\begin{equation}
\left(\frac{z}{w}\right)^{\frac{N}{2}}K_{CD}(z,w)=\mathcal{O}(1)N|1-|z||^{-\frac{\beta_j}{2}}|1-|w||^{-\frac{\beta_j}{2}}
\end{equation}

\noindent so that 

\begin{equation}\label{eq:trest2}
\left(\frac{z}{w}\right)^{\frac{N}{2}}K(z,w)=\mathcal{O}(1)N \min (|z|^{\frac{M}{2}},|z|^{-\frac{M}{2}})\min (|w|^{\frac{M}{2}},|w|^{-\frac{M}{2}}).
\end{equation}

Again by retuning the parameters, we see that essentially the only other possibility for $|z-w|<N^{-1}/2$ is that $|1-|z||,|1-|w||<N^{-1}$. In this case, Theorem \ref{th:singpolyasy1} implies that 

\begin{equation}
\left(\frac{z}{w}\right)^{\frac{N}{2}}K_{CD}(z,w)=\mathcal{O}(1)N^{1+\beta_j}
\end{equation}

\noindent uniformly. One then needs to take into account the different behavior of $v(z)$ for $|1-|z||<M^{-1}$ and $|1-|z||>M^{-1}$. We have the following cases:

\vspace{0.3cm}

\underline{$M^{-1}<|1-|z||,|1-|w||<N^{-1}$}: Here we have 

\begin{align}\label{eq:trest3}
\notag \left(\frac{z}{w}\right)^{\frac{N}{2}}K(z,w)&=\mathcal{O}(1)\min(|z|^{\frac{M}{2}},|z|^{-\frac{M}{2}})\min(|w|^{\frac{M}{2}},|w|^{-\frac{M}{2}})\\
&\times |1-|z||^{\frac{\beta_j}{2}}|1-|w||^{\frac{\beta_j}{2}}N^{\beta_j+1}.
\end{align}

\underline{$|1-|z||<M^{-1}$ and $M^{-1}<|1-|w||<N^{-1}$ (or vice versa):} In this case, we have 

\begin{equation}\label{eq:hsest1}
\left(\frac{z}{w}\right)^{\frac{N}{2}}K(z,w)=\mathcal{O}(1)\frac{N^{1+\beta_j}}{\sqrt{M}}|1-|z||^{\frac{\beta_j}{2}-\frac{1}{2}}|1-|w||^{\frac{\beta_j}{2}}\min(|w|^{\frac{M}{2}},|w|^{-\frac{M}{2}})
\end{equation}

\noindent and the other case by interchanging the roles of $z$ and $w$.

\vspace{0.3cm}

\underline{$|1-|z||,|1-|w||<M^{-1}$:} Finally we have

\begin{equation}\label{eq:trest4}
\left(\frac{z}{w}\right)^{\frac{N}{2}}K(z,w)=\mathcal{O}(1)\frac{N^{1+\beta_j}}{M}|1-|z||^{\frac{\beta_j}{2}-\frac{1}{2}}|1-|w||^{\frac{\beta_j}{2}-\frac{1}{2}}
\end{equation}

\noindent uniformly.

\vspace{0.3cm}

\bf The long range regime: \rm In this case we estimate

\begin{align}
\notag \left(\frac{z}{w}\right)^{\frac{N}{2}}K_{CD}(z,w)&=\mathcal{O}(1)N\left(\frac{z}{w}\right)^{\frac{N}{2}}\phi_N(w)\overline{\phi}_N(z^{-1})\\
&+\mathcal{O}(1)N\left(\frac{w}{z}\right)^{\frac{N}{2}}\phi_N(z)\overline{\phi}_N(w^{-1}).
\end{align}

Again we consider the different parts of the contour $z$ and $w$ can  lie on.

\vspace{0.3cm}

\underline{$|z|,|w|<1-\delta/6$ or $|z|,|w|>1+\delta/6$:} Making use of Theorem \ref{th:singpolyasy1} we have 

\begin{equation}
\left(\frac{z}{w}\right)^{\frac{N}{2}}K_{CD}(z,w)=\mathcal{O}(1)\max(|z|^{\frac{N}{2}} |w|^{\frac{N}{2}},|z|^{-\frac{N}{2}}|w|^{-\frac{N}{2}})
\end{equation}

\noindent and we have as before, for some fixed $\alpha>0$

\begin{equation}\label{eq:hsest00}
\left(\frac{z}{w}\right)^{\frac{N}{2}}K(z,w)=\mathcal{O}(e^{-\alpha(M-N)})
\end{equation}

\noindent uniformly. 
\vspace{0.3cm}

Let us now focus on the situation where $z\in[1-\delta/6,1+\delta/6]\times w_j$ and $w\in[1-\delta/6,1+\delta/6]\times w_i$ for some $i$ and $j$ (note that possibly $i=j$).

\vspace{0.3cm}

\underline{$|1-|z||,|1-|w||>N^{-1}$:} In this case our worst case scenario is where $z$ and $w$ are on the opposite sides of the unit circle and we have 

\begin{equation}\label{eq:hsest2}
\left(\frac{z}{w}\right)^{\frac{N}{2}}K(z,w)=\mathcal{O}(1)N \min(|z|^{\frac{M-N}{2}},|z|^{-\frac{M-N}{2}})\min(|w|^{\frac{M-N}{2}},|w|^{-\frac{M-N}{2}}).
\end{equation}

\underline{$|1-|z||>N^{-1}$ and $M^{-1}<|1-|w||<N^{-1}$ (or vice versa):} In this case, $|w|^{\pm N}=\mathcal{O}(1)$ so we see that 

\begin{align}\label{eq:hsest3}
\notag \left(\frac{z}{w}\right)^{\frac{N}{2}}K(z,w)&=\mathcal{O}(1) N\min(|z|^{\frac{M-N}{2}},|z|^{-\frac{M-N}{2}})\min(|w|^{\frac{M}{2}},|w|^{-\frac{M}{2}})\\
&\times|1-|w||^{\frac{\beta_i}{2}} N^{\frac{\beta_i}{2}}.
\end{align}

The opposite case is obtained by interchanging $z$ and $w$.

\vspace{0.3cm}

\underline{$|1-|z||>N^{-1}$ and $|1-|w||<M^{-1}$ (or vice versa):} In this case, we need to take into account the behavior of $v$ near the unit circle. We find

\begin{align}\label{eq:hsest4}
\notag \left(\frac{z}{w}\right)^{\frac{N}{2}}K(z,w)&=\mathcal{O}(1) \frac{N}{\sqrt{M}}\min(|z|^{\frac{M-N}{2}},|z|^{-\frac{M-N}{2}})\\
&\times|1-|w||^{\frac{\beta_i}{2}-\frac{1}{2}} N^{\frac{\beta_i}{2}}.
\end{align}

The opposite case is obtained by interchanging $z$ and $w$.

\vspace{0.3cm}

\underline{$M^{-1}<|1-|z||,|1-|w||<N^{-1}$:} With similar arguments, 

\begin{align}\label{eq:hsest5}
\notag \left(\frac{z}{w}\right)^{\frac{N}{2}}K(z,w)&=\mathcal{O}(1) N\min(|z|^{\frac{M}{2}},|z|^{-\frac{M}{2}})\min(|w|^{\frac{M}{2}},|w|^{-\frac{M}{2}})\\
&\times |1-|w||^{\frac{\beta_i}{2}} N^{\frac{\beta_i}{2}}|1-|z||^{\frac{\beta_j}{2}} N^{\frac{\beta_j}{2}}.
\end{align}

\vspace{0.3cm}

\underline{$M^{-1}<|1-|z||<N^{-1}$ and $|1-|w||<M^{-1}$ (or vice versa):} Here we find

\begin{align}\label{eq:hsest6}
\notag \left(\frac{z}{w}\right)^{\frac{N}{2}}K(z,w)&=\mathcal{O}(1) \frac{N}{\sqrt{M}}\min(|z|^{\frac{M}{2}},|z|^{-\frac{M}{2}})\\
&\times|1-|w||^{\frac{\beta_i}{2}-\frac{1}{2}} N^{\frac{\beta_i}{2}}|1-|z||^{\frac{\beta_j}{2}}N^{\frac{\beta_j}{2}}.
\end{align}

The opposite case is obtained by interchanging $z$ and $w$.

\vspace{0.3cm}

\underline{$|1-|z||,|1-|w||<M^{-1}$:} Here we have 

\begin{equation}\label{eq:hsest7}
\left(\frac{z}{w}\right)^{\frac{N}{2}}K(z,w)=\mathcal{O}(1)\frac{N}{M}|1-|z||^{\frac{\beta_j}{2}-\frac{1}{2}}|1-|w||^{\frac{\beta_i}{2}-\frac{1}{2}}N^{\frac{\beta_j+\beta_i}{2}}.
\end{equation}

\vspace{0.3cm}

We are now in a position to estimate the trace and the HS-norm. Let us start with the trace.

\vspace{0.3cm}

\bf Estimating the trace: \rm As $K$ is trace class, its trace is well defined and given by $\sum_j\int_{\Gamma_j}K(z,z)\sigma(dz)$. Thus we only need the short range asymptotics ($z=w$). We then decompose our integration contour into the relevant parts:

\begin{align}
\notag\int_{\Gamma_j}&=\int_{\Gamma_j\cap\lbrace |1-|z||>\delta/6\rbrace}+\int_{\Gamma_j\cap\lbrace N^{-1}<|1-|z||<\delta/6\rbrace}\\
&+\int_{\Gamma_j\cap\lbrace M^{-1}<|1-|z||<N^{-1}\rbrace}+\int_{\Gamma_j\cap\lbrace |1-|z||<M^{-1}\rbrace}
\end{align}

By \eqref{eq:trest1},\eqref{eq:trest2},\eqref{eq:trest3}, and \eqref{eq:trest4} we have then for some fixed $\alpha>0$

\begin{align}
\notag \mathrm{tr}K&=\mathcal{O}(e^{-\alpha(M-N)})+\sum_j\int_{N^{-1}}^{\delta/6}\mathcal{O}(1)N (1+r)^{-M}dr\\
\notag &+\sum_j\int_{M^{-1}}^{N^{-1}}\mathcal{O}(1)N^{\beta_j+1}(1+r)^{-M}r^{\beta_j}dr\\
\notag &+\sum_j\int_0^{M^{-1}}\mathcal{O}(1)\frac{N^{1+\beta_j}}{M}r^{\beta_j-1}dr\\
&=\mathcal{O}(e^{-\alpha(M-N)})+\mathcal{O}(1)\frac{N}{M}+\mathcal{O}(1)\frac{N}{M}+\mathcal{O}(1)\sum_j\left(\frac{N}{M}\right)^{1+\beta_j}\\
\notag &=\mathcal{O}(1)\frac{N}{M}
\end{align}

\noindent uniformly on $A_R$.

\vspace{0.3cm}

\bf Estimating the square of the HS-norm: \rm To apply Theorem \ref{th:simon}, we wish to also estimate the HS-norm of $M_N KM_N^{-1}$. By definition, this is given by 

\begin{equation}
||M_N K M_N^{-1}||_2^2=\sum_j\sum_l\int_{\Gamma_j}\int_{\Gamma_l}\sigma(dz)\sigma(dw)\left|\frac{z}{w}\right|^N|K(z,w)|^2.
\end{equation}

We decompose $\Gamma_j\times \Gamma_l$ into the short and long range regimes and then split these into parts where we have estimated the different asymptotics of the kernel. Let us consider first the short range regime. We split $\Gamma_j\times\Gamma_l$ in the short range regime into the cases where i) $|1-|z||,|1-|w||>\delta/6$, ii) $|1-|z||,|1-|w||\in(N^{-1},\delta/6)$, iii) $|1-|z||,|1-|w||\in (M^{-1},N^{-1})$, iv) $|1-|z||<M^{-1}$ and $|1-|w||\in(M^{-1},N^{-1})$ (or vice versa), and v) $|1-|z||,|1-|w||<M^{-1}$ (recall that the remaining cases can be included into these by retuning the parameters). Making use of \eqref{eq:trest1}, \eqref{eq:trest2}, \eqref{eq:trest3}, \eqref{eq:hsest1}, and \eqref{eq:trest4} we see that the contribution of the short range case to the square of the HS norm is

\begin{align}
\notag &\mathcal{O}(e^{-\alpha(M-N)})+\mathcal{O}(1)N^2\left(\int_{N^{-1}}^{\delta/6}(1+r)^{-M}dr\right)^2\\
\notag &+\mathcal{O}(1)\sum_j N^{2+2\beta_j}\left(\int_{M^{-1}}^{N^{-1}}(1+r)^{-M}r^{\beta_j}dr\right)^2\\
&+\mathcal{O}(1)\sum_j \frac{N^{2+2\beta_j}}{M}\int_0^{M^{-1}}r^{\beta_j-1}dr\int_{M^{-1}}^{N^{-1}}(1+r)^{-M}r^{\beta_j}dr\\
\notag & +\mathcal{O}(1)\sum_j \frac{N^{2+2\beta_j}}{M^2}\left(\int_0^{M^{-1}}r^{\beta_j-1}dr\right)^2\\
\notag &=\mathcal{O}(1)\left(\frac{N}{M}\right)^2.
\end{align}

For the long range regime, we again split it into the relevant subregions: i) $|1-|z||,|1-|w||>\delta/6$, ii) $|1-|z||,|1-|w||\in(N^{-1},\delta/6)$, iii) $|1-|z||\in(N^{-1},\delta/6)$ and $|1-|w||\in(M^{-1},N^{-1})$ (and vice versa), iv) $|1-|z||\in(N^{-1},\delta/6)$ and $|1-|w||<M^{-1}$ (and vice versa), v) $|1-|z||,|1-|w||\in(M^{-1},N^{-1})$, vi) $|1-|z||\in(M^{-1},N^{-1})$ and $|1-|w||<M^{-1}$ (and vice versa), and vii) $|1-|z||,|1-|w||<M^{-1}$. Making use of \eqref{eq:hsest0}, \eqref{eq:hsest00}, \eqref{eq:hsest2}, \eqref{eq:hsest3}, \eqref{eq:hsest4}, \eqref{eq:hsest5}, \eqref{eq:hsest6}, and \eqref{eq:hsest7} we see that the contribution of the long range regime to the square of the HS-norm is (for some fixed $\alpha>0$)

\begin{align}
\notag &\mathcal{O}(e^{-\alpha(M-N)})+\mathcal{O}(1)N^2\left(\int_{N^{-1}}^{\delta/6}(1+r)^{-(M-N)}dr\right)^2\\
\notag &+\mathcal{O}(1)\sum_j N^{2+\beta_j}\int_{N^{-1}}^{\delta/6}(1+r)^{-(M-N)}dr\int_{M^{-1}}^{N^{-1}}(1+r)^{-M}r^{\beta_j}dr\\
\notag &+\mathcal{O}(1)\frac{N^{2+\beta_j}}{M}\sum_j \int_{N^{-1}}^{\delta/6}(1+r)^{-(M-N)}dr\int_0^{M^{-1}}r^{\beta_j-1}dr\displaybreak[0]\\
&+\mathcal{O}(1)\sum_{i,j} N^{2+\beta_i+\beta_j}\int_{M^{-1}}^{N^{-1}}(1+r)^{-M}r^{\beta_j}dr\int_{M^{-1}}^{N^{-1}}(1+r)^{-M}r^{\beta_i}dr\\
\notag & +\mathcal{O}(1)\sum_{i,j}\frac{N^{2+\beta_i+\beta_j}}{M}\int_{M^{-1}}^{N^{-1}}(1+r)^{-M}r^{\beta_j}dr\int_0^{M^{-1}} r^{\beta_i-1}dr\\
&\notag +\mathcal{O}(1)\sum_{i,j}\frac{N^{2+\beta_i+\beta_j}}{M^2}\int_0^{M^{-1}}r^{\beta_j-1}dr\int_0^{M^{-1}}r^{\beta_i-1}dr\\
\notag &=\mathcal{O}(1)\left(\frac{N}{M}\right)^2,
\end{align} 

\noindent where we made use of $N/M\leq q<1$ for large enough $n$.

\vspace{0.3cm}

Combining these, we see that $||M_N K M_N^{-1}||_2=\mathcal{O}(1)N/M$ so that by Theorem \ref{th:simon},

\begin{equation}
\det(I+K)=1+\mathcal{O}(1)\frac{N}{M}
\end{equation} 

\noindent uniformly on $A_R$ and $\lbrace \min_{i\neq j}|w_i-w_j|>\delta\rbrace$.

\end{proof}

\subsection{Merging singularities} The basic idea here is the same as before. We estimate the kernel with the help of Theorem \ref{th:singpolyasy2}. For simplicity, we use the symbol $|z-e^{it}|^{\beta}||z-e^{-it}|^{\beta}$. The estimates are the same in the case obtained by rotating this symbol.

\begin{lemma}\label{le:msdet}
For $|z|=1$, let $f_t(z)=|z-e^{it}|^\beta|z-e^{-it}|^\beta$. Assuming that there exists a $q\in(0,1)$ such that for large enough $N$, $N/M<q$, we have 

\begin{equation}
\det(I+K)=1+\mathcal{O}(1)\frac{N}{M},
\end{equation}

\noindent where $\mathcal{O}(1)$ is uniform in $0<t<t_0$.

\end{lemma}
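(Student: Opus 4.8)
The plan is to repeat, almost verbatim, the argument of Lemma~\ref{le:sdet}; the only genuinely new ingredient is the $t$-dependent orthogonal-polynomial asymptotics of Theorem~\ref{th:singpolyasy2} (together with Lemma~\ref{le:offcont} for the degenerating part of the contour). As there, I would first conjugate $K$ by the multiplication operator $M_N$, $(M_Nh)(z)=z^{-N/2}h(z)$, which leaves $\mathrm{tr}\,K$ and $\det(I+K)$ unchanged while symmetrizing the kernel into $(z/w)^{N/2}K(z,w)$. It then suffices to prove $\mathrm{tr}\,K=\mathcal{O}(N/M)$ and $\|M_NKM_N^{-1}\|_2=\mathcal{O}(N/M)$, both uniformly in $t\in(0,t_0)$, and feed this into Theorem~\ref{th:simon}: since these quantities tend to $0$, Simon's estimate applied to $M_NKM_N^{-1}$ gives $\det(I+K)=e^{\mathrm{tr}\,K}(1+\mathcal{O}(N/M))=1+\mathcal{O}(N/M)$. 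On the circular arcs $|z|=1\pm\epsilon$ of every $\Gamma_j$ the weight $f_t$ is bounded above and below and the polynomials have the "compact subset" behaviour of Theorem~\ref{th:singpolyasy2}, so the factor $v(z)=\mathcal{O}(|z|^{\pm M})$ forces the kernel to be $\mathcal{O}(e^{-\alpha(M-N)})$ there; exactly as in the proof of Lemma~\ref{le:sdet}, the only dangerous portion of the contour is the union of the radial segments $(1-\epsilon,1+\epsilon)\times e^{\pm it}$, where $|1-|z||$ can drop below $M^{-1}$ and $v$ may even have a simple pole, together with the part of $\Gamma_1$ that, as $t\to0$, comes to lie in the sector $|\arg z|<t$ between the two rays.

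On these segments I would split $\Gamma_j\times\Gamma_l$ into the short-range regime $|z-w|<\tfrac12 N^{-1}$ (where Christoffel--Darboux difference quotients become derivatives and $\zeta^N=\mathcal{O}(1)z^N$ for intermediate points) and the long-range regime $|z-w|>\tfrac12 N^{-1}$ (where the denominator $1-z/w$ is bounded below by a multiple of $N^{-1}$), precisely as in Lemma~\ref{le:sdet}. Each regime is then cut according to the three windows for $t$: for $cN^{-1}<t<CN^{-1}$ the bounds of Theorem~\ref{th:singpolyasy2} are structurally those of Theorem~\ref{th:singpolyasy1} for a \emph{single} Fisher--Hartwig exponent $2\beta$ (since both distances $|z-e^{\pm it}|$ are then of order $\max(|1-|z||,N^{-1})$), so the bookkeeping is the one of Lemma~\ref{le:sdet} with $\beta_j$ replaced by $2\beta$; for $Nt<c$ one uses the near-merging bounds $\Phi_N=\mathcal{O}(1)\tfrac{1}{\sqrt{f_t(z)}}N^\beta\max(|1-|z||^{\beta/2},\dots)\max(\cdots)$ from Theorem~\ref{th:singpolyasy2} and Lemma~\ref{le:offcont}, in which the effective exponent near the coalesced singularity is again $2\beta$; for $t>CN^{-1}$ one uses the stated bounds, now introducing an \emph{additional} scale at $|1-|z||\sim t$ across which $\phi_N'$ changes behaviour. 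In every such subcase the factor $\sqrt{f_t(z)}$ multiplying $\phi_N$ in $K$ cancels the $1/\sqrt{f_t(z)}$ in the asymptotics, leaving at worst a power $|1-|z||^{\beta/2}$ (or $|1-|z||^{\beta/2-1/2}$ once the $M^{-1}$-cutoff of $v$ is used), and the resulting one- and two-dimensional integrals over $r=|1-|z||$ against $(1+r)^{-M}$ or $(1+r)^{-(M-N)}$ evaluate, exactly as in Lemma~\ref{le:sdet} and using $N/M<q<1$, to $\mathcal{O}(N/M)$ for the trace and $\mathcal{O}((N/M)^2)$ for the square of the Hilbert--Schmidt norm. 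Summing the pieces completes the argument.

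The main obstacle is not any individual estimate but keeping the case analysis closed and, above all, \emph{uniform all the way down to} $t=0$. Two points deserve care: first, that the extra scale $t$ in the regime $t>CN^{-1}$ does not spoil the $\mathcal{O}(N/M)$ bound — and indeed it cannot, because there the polynomials near the merging point are of size $(N/t)^{\beta/2}\ll N^\beta$, i.e.\ strictly smaller than in the genuinely coalesced regime $Nt<c$, which is therefore the worst case; second, that in the regime $Nt<c$, where the contour $\Gamma_1$ degenerates, its contribution is legitimately controlled by Lemma~\ref{le:offcont} rather than by the ray estimates of Theorem~\ref{th:singpolyasy2}, the two being consistent up to uniform $\mathcal{O}(1)$ factors on the overlap, which is exactly what allows the three windows in $t$ to be glued. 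Once these consistency and uniformity checks are granted, the computation is a routine (if lengthy) repetition of the one carried out in the proof of Lemma~\ref{le:sdet}, and yields $\det(I+K)=1+\mathcal{O}(N/M)$ uniformly in $0<t<t_0$.
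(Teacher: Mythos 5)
Your proposal follows essentially the same route as the paper's proof: conjugation by $M_N$, Simon's inequality applied after showing $\mathrm{tr}\,K$ and the Hilbert--Schmidt norm are $\mathcal{O}(N/M)$ uniformly in $t$, the short/long-range splitting of Lemma \ref{le:sdet}, the bounds of Theorem \ref{th:singpolyasy2} organized by the three $t$-windows with the cancellation of $\sqrt{f_t}$ against $1/\sqrt{f_t}$, and Lemma \ref{le:offcont} for the short-range case where $z$ and $w$ sit on different rays (only relevant for $Nt<c$). The paper streamlines the long-range regime by observing that $\sqrt{f_t}\,\phi_N$ obeys the same bounds as in the non-merging case, rather than redoing the casework window by window, but this is an organizational difference only.
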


\begin{proof}

Let us begin by considering what is different in the case of merging singularities. First of all, the asymptotics of the orthogonal polynomials are different. Next we note that if $t=\mathcal{O}(1)N^{-1}$, we might have the issue that in the short range regime, we need to consider the case where $z\in(1-\epsilon,1+\epsilon)e^{it}$ and $w\in(1-\epsilon,1+\epsilon)e^{-it}$ (or vice versa). In this case, we can't use the same estimate for the CD-kernel we used before - the points $\zeta$ and $\xi$ might not be on the contour any more (or then they might not be close to $z$ and $w$).  For this, we will use Lemma \ref{le:offcont}. Another issue is that the asymptotics of $f_t$ near $e^{\pm it}$ are different. Instead of $f(z)=\mathcal{O}(1)|1-|z||^{\beta}$, we have 

\begin{equation}
f_t(z)=\mathcal{O}(1)|1-|z||^{\beta}||z|-1+(1-e^{-2it})|^\beta=\mathcal{O}(1)|1-|z||^{\beta}\max(|1-|z||^{\beta},t^{\beta}). 
\end{equation}

\vspace{0.3cm}

Apart from these differences, the reasoning is similar to the previous case so we will not be as detailed as we were there. We will again consider the conjugated operator $M_N K M_N^{-1}$. Let us begin with considering the long range regime.

\vspace{0.3cm}

\bf Long range regime: \rm As in the case of non-merging singularities, we estimate the CD-kernel as 

\begin{align}
\notag \left(\frac{z}{w}\right)^{\frac{N}{2}}K_{CD}(z,w)&=\mathcal{O}(N)\left(\frac{w}{z}\right)^{\frac{N}{2}}\phi_N(z)\overline{\phi}_N(w^{-1})\\
&+\mathcal{O}(N)\left(\frac{z}{w}\right)^{\frac{N}{2}}\overline{\phi}_N(z^{-1})\phi_N(w).
\end{align}

As the polynomials have the same asymptotics as in the case of the non-merging singularities for $|1-|z||>\gamma$, we can focus on the case where at least one of the $z,w$ is at distance less than $\gamma$ from the unit circle. We then recall that $K(z,w)$ comes with a factor of $\sqrt{f_t(z)}\sqrt{f_t(w)}$ so what is relevant is the asymptotics of $\sqrt{f_t(z)}\phi_N(z)$. Comparing Theorem \ref{th:singpolyasy1} and \ref{th:singpolyasy2}, we see that these are the same in the two cases (the only case that looks different is the $N|1-|z||<1$ case for $Nt<c$, but here the difference is the term $\max((N|1-|z||)^{\frac{\beta}{2}},(Nt)^{\frac{\beta}{2}})<\max(1,c^{\frac{\beta}{2}})$). Thus in the contribution of the long range regime to the square of the HS-norm of $M_NKM_N^{-1}$ is the same as before (of order $(N/M)^2$).

\vspace{0.3cm}

\bf Short range regime: \rm We again estimate 

\begin{align}
\notag \left(\frac{z}{w}\right)^{\frac{N}{2}}K_{CD}(z,w)&=\mathcal{O}(1)N\phi_N(z)\overline{\phi}_N(\zeta^{-1})+\mathcal{O}(1)\phi_N(z)\overline{\phi}_N'(\zeta^{-1})\\
&+ \mathcal{O}(1)\overline{\phi}_N(z^{-1})\phi_N'(\xi)
\end{align}

\noindent for some points $\xi,\zeta$ with $|z-\zeta|=\mathcal{O}(N^{-1})$ and $|z-\xi|=\mathcal{O}(N^{-1})$. If $z$ and $w$ are on the different rays, we take $\xi$ and $\zeta$ such that they are on the line going through $z$ and $w$ (this can only happen if $t=\mathcal{O}(1)N^{-1}$ which up to a $\mathcal{O}(1)$ term is equivalent to the $Nt<c$ case). Otherwise, we take them on the contour and between the points. 

\vspace{0.3cm}

For $|1-|z||>\gamma$ and $|1-|w||>\gamma$ we can reason as before to get exponential smallness. Let us then simply argue as in the case of non-merging singularities to find the asymptotics of the kernel for different values of $z$ and $w$. 

\vspace{0.3cm}

\emph{$|1-|z||,|1-|w||\in(N^{-1},\gamma]$:} We have now e.g. $|1-|z||=\mathcal{O}(1)|1-|\zeta||$ implying also $f_t(z)=\mathcal{O}(1)f_t(\zeta)$. Let us first consider the case where $z$, $w$ are on the same ray (as the one where they are on different rays is only relevant for $Nt<c$). Using Theorem \ref{th:singpolyasy2}, we find

\begin{equation}
\sqrt{f_t(z)}\sqrt{f_t(w)}\left(\frac{z}{w}\right)^{\frac{N}{2}}K_{CD}(z,w)=\mathcal{O}(1)N
\end{equation}

\noindent and 

\begin{equation}\label{eq:kest1}
\left(\frac{z}{w}\right)^{\frac{N}{2}}K(z,w)=\mathcal{O}(1)N\min(|z|^{\frac{M}{2}},|z|^{-\frac{M}{2}})\min(|w|^{\frac{M}{2}},|w|^{-\frac{M}{2}})
\end{equation}

\noindent uniformly in $z,w$ and $0<t<t_0$. Again up to a uniform $\mathcal{O}(1)$ term, the only remaining situation is when $|1-|z||,|1-|w||<N^{-1}$ and in this case Theorem \ref{th:singpolyasy2} implies that 

\begin{equation}
\left(\frac{z}{2}\right)^{\frac{N}{2}}K_{CD}(z,w)=\mathcal{O}(1)N^{\beta+1}(\min(N,t^{-1}))^{\beta}.
\end{equation}

We thus have (uniformly)

\begin{align}\label{eq:kest2}
\notag \left(\frac{z}{2}\right)^{\frac{N}{2}}K(z,w)&=\mathcal{O}(1)\sqrt{v(z)}\sqrt{v(w)}|1-|z||^{\frac{\beta}{2}}|1-|w||^{\frac{\beta}{2}}\\
&\times \max(|1-|z||^{\frac{\beta}{2}},t^{\frac{\beta}{2}})\max(|1-|w||^{\frac{\beta}{2}},t^{\frac{\beta}{2}})N^{\beta+1}\min(N^\beta,t^{-\beta}).
\end{align}

\vspace{0.3cm}

Consider finally the case where $z$ and $w$ are not on the same ray (in the case where $Nt<c$). In this case, we estimate (using Lemma \ref{le:offcont})

\begin{align}\label{kest3}
\left(\frac{z}{w}\right)^{\frac{N}{2}}K_{CD}(z,w)&=\mathcal{O}(1)N\min(|1-|z||^{-\beta},N^\beta)\min(|1-|w||^{-\beta},N^\beta).
\end{align}

\vspace{0.3cm}

\bf Estimating the trace: \rm For the trace we only need the short range estimate. Using \eqref{eq:kest1} and \eqref{eq:kest2}, we have for some $\alpha>0$

\begin{align}
\notag \mathrm{tr}K &=\mathcal{O}(e^{-\alpha(M-N)})+\mathcal{O}(1)N\int_{N^{-1}}^\gamma (1+r)^{-M}dr\\
&+\mathcal{O}(1)\int_{|1-|z||<N^{-1}}|v(z)||1-|z||^{\beta}\max(|1-|z||^\beta,t^\beta)dz\\
\notag &\times N^{\beta+1}\min(N^\beta,t^{-\beta}).
\end{align}

The second term is (uniformly in $0<t<t_0$) of order $N/M$. So it remains to estimate the last term. If $t>N^{-1}$, it is 

\begin{equation}
\mathcal{O}(1)N^{\beta+1}\int_{|1-|z||<N^{-1}}|v(z)||1-|z||^{\beta}dz
\end{equation}

\noindent which is of the kind of integrals we've already estimated and found to be of order $N/M$. If $t<N^{-1}$, we the term is 

\begin{align}
\notag &\mathcal{O}(1)N^{2\beta+1}\int_{t<|1-|z||<N^{-1}}|v(z)||1-|z||^{2\beta}dz\\
&+\mathcal{O}(1)N^{2\beta+1}t^\beta\int_{|1-|z||<t}|v(z)||1-|z||^\beta dz
\end{align}

Here the first term can estimated upwards to 

\begin{equation}
\mathcal{O}(1)N^{2\beta+1}\int_{0<|1-|z||<N^{-1}}|v(z)||1-|z||^{2\beta}dz
\end{equation}

\noindent which again is something we have estimated to be of order $N/M$. The second one can be estimated upwards (recall $Nt<1$) to 

\begin{equation}
\mathcal{O}(1)N^{\beta+1}\int_{|1-|z||<N^{-1}}|v(z)||1-|z||^\beta dz
\end{equation}

\noindent so we conclude that the trace is of order $N/M$ (uniformly in $t$).

\vspace{0.3cm}

\bf Estimating the square of the HS-norm: \rm As noted, the long range estimate is the same and of order $(N/M)^2$. For the short range one, we note that if the points are on the same ray, our estimate for the kernel just factors into two copies of what we estimated in the case of the trace so from this we get a term which is uniformly if order $(N/M)^2$. We are left with an estimate in the case where the points are on different rays (in the case $Nt<c$). Again we have factorization and the remaining estimate is calculating 

\begin{equation}
N^2\left(\int_{|1-|z||<\gamma}|f_t(z)||v(z)|\min(|1-|z||^{-2\beta},N^{2\beta})dz\right)^2.
\end{equation}

Such integrals were estimated for the trace (and using those estimates we find this to be of order $(N/M)^2$ as well). We conclude that 

\begin{equation}
||M_NK M_N^{-1}||_2^2=\mathcal{O}(1)\left(\frac{N}{M}\right)^2
\end{equation}

\noindent uniformly in $t$.

\vspace{0.3cm}

Thus making use of Theorem \ref{th:simon} we have our claim.

\end{proof}

\section{Proof of main results}

Our estimates so far essentially state that if $N/M\to 0$, the discrete case does not differ from the CUE. Proving our main results is then essentially as in \cite{webb}. For completeness (and due to the fact that there are small differences) we provide proofs here. We will first prove some of the propositions presented when describing the structure of our proof and finally combine these into a proof of Theorem \ref{th:main}.

\subsection{Proof of Proposition \ref{prop:linstat} and Proposition \ref{prop:approxconv}}

\begin{proof}[Proof of Proposition \ref{prop:linstat}]
While some variant of the result certainly follows from more general results (see e.g. \cite{bgg}), we choose to give a proof here for the sake of completeness. Recall the notation

\begin{equation}
\widetilde{Z}_j=\sum_{k=1}^N z_k^j.
\end{equation}

We also write 

\begin{equation}
\begin{array}{ccc}
\widetilde{X}_j=\mathrm{Re}(\widetilde{Z}_j) & \mathrm{and}  & \widetilde{Y}_j=\mathrm{Im}(\widetilde{Z}_j).
\end{array}
\end{equation}

We wish to prove that for any fixed $l$, $(\widetilde{X}_j,\widetilde{Y}_j)_{j=1}^l$ converge in law to $(\sqrt{j/2}X_j,\sqrt{j/2}Y_j)_{j=1}^l$, where $(X_1,Y_1,...,X_l,Y_l)$ are i.i.d. standard Gaussians. Let us consider the moment generating function of $(\widetilde{X}_j,\widetilde{Y}_j)_{j=1}^l$: let $t_1,...,t_l,s_1,...,s_l\in \R$ and consider 

\begin{align}
\notag M(t_1,...,t_l,s_1,...,s_l)&=\E\left(e^{\sum_{j=1}^l (t_j \widetilde{X}_j+s_j\widetilde{Y}_j)}\right)\\
&=\E\left(\prod_{k=1}^N e^{\sum_{j=1}^l\frac{1}{2}\left((t_j-is_j)z_k^j+(t_j+is_j)z_k^{-j}\right)}\right).
\end{align}

If we then define 

\begin{equation}
V(z)=\sum_{j=1}^l\frac{1}{2}\left((t_j-is_j)z^j+(t_j+is_j)z^{-j}\right)
\end{equation}

\noindent we have by Proposition \ref{prop:dischs}

\begin{equation}
M(t_1,...,t_l,s_1,...,s_l)=T_{N-1}(e^V).
\end{equation}

Then by Proposition \ref{prop:detsing} (and Remark \ref{re:contour}) as well as Lemma \ref{le:nsdet},

\begin{equation}
M(t_1,...,t_l,s_1,...,s_l)=T_{N-1}(e^V)=\mathcal{T}_{N-1}(e^{V})\times(1+\mathcal{O}(e^{-\alpha(M-N)}))
\end{equation}

\noindent for some fixed $\alpha>0$. Now by the Strong Szeg\"o theorem (Theorem \ref{th:szego})

\begin{equation}
\lim_{N\to\infty}\mathcal{T}_{N-1}(e^{V})=e^{\sum_{j=1}^lj \frac{t_j^2+s_j^2}{4}}=\E\left(e^{\sum_{j=1}^l \sqrt{\frac{j}{2}}(t_j X_j+s_j Y_j)}\right)
\end{equation}

\noindent which finishes the proof.

\end{proof}

The proof of Proposition \ref{prop:approxconv} is then more or less immediate, and while a similar one is presented in \cite{webb}, we give one for the convenience of the reader. 

\begin{proof}[Proof of Proposition \ref{prop:approxconv}]
Noting that 

\begin{equation}
\E(F_{N,M,L}^\beta(e^{i\theta}))=\E\left(\prod_{k=1}^Ne^{-\beta\sum_{j=1}^L\frac{1}{j}(\cos (j\theta) \widetilde{X}_j+\sin(j\theta) \widetilde{Y}_j)}\right)
\end{equation}

\noindent we see from the proof of Proposition \ref{prop:linstat} that under our assumptions

\begin{equation}
\lim_{N\to\infty}\E(F_{N,M,L}^\beta(e^{i\theta}))=e^{\frac{\beta^2}{4}\sum_{j=1}^L\frac{1}{j}}.
\end{equation}

In fact, this convergence is uniform in $\theta$ as the relevant continuum Toeplitz determinant is independent of $\theta$ due to the fact that the law of the eigenvalues of the CUE is rotation invariant.

\vspace{0.3cm}

Due to the convergence in distribution of Proposition \ref{prop:linstat}, there exists a probability space where one can construct a sequence of random variables $(Z_1^{(N,M)},...,Z_L^{(N,M)})_{1\leq N\leq M}$ and $(Z_1,...,Z_L)$ such that for each $N,M$ 
\begin{equation}
(Z_1^{(N,M)},...,Z_L^{(N,M)})\stackrel{d}{=}\left(\sum_{k=1}^Nz_k,...,\sum_{k=1}^Nz_k^L\right)
\end{equation}

\noindent where $(z_1,...,z_N)$ is sampled from $\mathbb{P}_{N,M}$,  $(Z_j)_{j=1}^L$ are independent complex Gaussians with real and imaginary parts independent centered real Gaussians of variance $j/2$, and 

\begin{equation}
(Z_1^{(N,M)},...,Z_L^{(N,M)})\to (Z_j)_{j=1}^L
\end{equation}

\noindent almost surely as $N\to\infty$ and $M-N\to\infty$.

\vspace{0.3cm}

This implies that if we construct an object agreeing in law with $F_{N,M,L}^\beta$ from these quantities, it converges uniformly almost surely to 

\begin{equation}
\theta\to e^{-\beta \mathrm{Re}\sum_{j=1}^L \frac{1}{j}e^{-ij\theta}Z_j}.
\end{equation}

Due to this uniform convergence (as well as the fact that $(-Z_j)_j\stackrel{d}{=}(Z_j)_j$) and the convergence of $\E(F_{N,M,L}^\beta(e^{i\theta}))$  to $\E(e^{\beta X_L(\theta)})$ it follows that the corresponding object whose law agrees with that of $\mu_{N,M,L}^\beta$ converges almost surely to $\mu_L^\beta$ as $N\to\infty$ in such a way that $M-N\to\infty$ (the convergence is in the topology of weak convergence of measures). Thus we have the claim.

\end{proof}

\subsection{Proof of Proposition \ref{prop:fh2}, Proposition \ref{prop:fh}, and Proposition \ref{prop:var}}

We begin with Proposition \ref{prop:fh2} 

\begin{proof}[Proof of Proposition \ref{prop:fh2}]
This follows directly from combining Theorem \ref{th:fhcont}, Proposition \ref{prop:detsing}, and Lemma \ref{le:sdet}.
\end{proof}

We then move onto Proposition \ref{prop:fh}.

\begin{proof}[Proof of Proposition \ref{prop:fh}]
This is essentially just a combination of our estimates on the Fredholm determinant and the strong Szeg\"o theorem or results on the asymptotics of Toeplitz determinants with Fisher-Hartwig singularities - namely we combine Theorem \ref{th:szego}, Theorem \ref{th:fhcont}, Proposition \ref{prop:detsing}, and Lemmas \ref{le:nsdet}, \ref{le:sdet}, and \ref{le:msdet}. As we make statements about uniformity, we give a few more details now.

\vspace{0.3cm}

1) Convergence follows directly from Proposition \ref{prop:detsing}, Theorem \ref{th:szego}, and Lemma \ref{le:nsdet}. Uniformity follows from the fact that the continuum Toeplitz determinant is independent of $\theta$ (e.g. by the Heine-Szeg\"o identity and the fact that the law of the eigenvalues of a CUE matrix is rotation invariant), while our estimate in Lemma \ref{le:nsdet} is uniform.

\vspace{0.3cm}

2) This is as the previous case except we use statement $1)$ of Theorem \ref{th:fhcont} instead of Theorem \ref{th:szego} for the asymptotics of the continuum Toeplitz determinant.

\vspace{0.3cm}

3) The reasoning is the same again - Proposition \ref{prop:detsing} gives the factorization into a Fredholm determinant with uniform estimates and a continuum Toeplitz determinant, which we've denoted by $E_{N,L}(\theta,\theta')$. The fact that this increases to $\E(e^{\beta X_L(\theta)}e^{\beta X_L(\theta')})$ is proven for example in \cite{simonsz}.

\vspace{0.3cm}

4) Again we make use of Proposition \ref{prop:detsing}, Theorem \ref{th:fhcont}, and Lemma \ref{le:sdet}. The uniformity now follows from the fact that the continuum Toeplitz determinant is rotation invariant (so we can rotate $\theta$ from the non-singular part to $\theta'-\theta$ in the singular part) and make use of the uniformity of statement $1)$ of Theorem \ref{th:fhcont} (now there is only one singularity so the estimate is uniform in its location).

\vspace{0.3cm}
 
5) and 6) Follow directly from combining Proposition \ref{prop:detsing}, Theorem \ref{th:fhcont}, and Lemma \ref{le:sdet} and Lemma \ref{le:msdet} respectively. 
 
\end{proof}

Finally we prove our variance estimate. 

\begin{proof}[Proof of Proposition \ref{prop:var}]
Let us write down explicitly all the definitions. We first expand the square and then note in each term we can interchange the order of integration as everything is non-negative.

\begin{align}
\notag \E&\left(\left(\int_0^{2\pi}f(e^{i\theta})(\mu_{N,M,L}^\beta(d\theta)-\mu_{N,M}^\beta(d\theta))\right)^2\right)\\
\notag &=\int_0^{2\pi}\int_0^{2\pi}f(e^{i\theta})f(e^{i\theta'})\frac{\E\left( F_{N,M,L}^\beta(e^{i\theta})F_{N,M,L}^\beta(e^{i\theta'})\right)}{\E(F_{N,M,L}^\beta(e^{i\theta}))\E(F_{N,M,L}^\beta(e^{i\theta'}))}\frac{d\theta}{2\pi}\frac{d\theta'}{2\pi}\\
&-2\int_0^{2\pi}\int_0^{2\pi}f(e^{i\theta})f(e^{i\theta'})\frac{\E\left( F_{N,M,L}^\beta(e^{i\theta})F_{N,M}^\beta(e^{i\theta'})\right)}{\E(F_{N,M,L}^\beta(e^{i\theta}))\E(F_{N,M}^\beta(e^{i\theta'}))}\frac{d\theta}{2\pi}\frac{d\theta'}{2\pi}\\
\notag &+\int_0^{2\pi}\int_0^{2\pi}f(e^{i\theta})f(e^{i\theta'})\frac{\E\left( F_{N,M}^\beta(e^{i\theta})F_{N,M}^\beta(e^{i\theta'})\right)}{\E(F_{N,M}^\beta(e^{i\theta}))\E(F_{N,M}^\beta(e^{i\theta'}))}\frac{d\theta}{2\pi}\frac{d\theta'}{2\pi}\\
&\notag =:I_1-2I_2+I_3.
\end{align}

Combining parts 1) and 3) of Proposition \ref{prop:fh}, we see that (even if we just assume $M-N\to \infty$ as $N\to\infty$), 

\begin{align}
\notag \lim_{N\to\infty}I_1& =\int_0^{2\pi}\int_0^{2\pi}f(e^{i\theta})f(e^{i\theta'})\frac{\E(e^{\beta X_L(\theta)}e^{\beta X_L(\theta'))}}{\E(e^{\beta X_L(\theta)}) \E(e^{\beta X_L(\theta')})}\frac{d\theta}{2\pi}\frac{d\theta'}{2\pi}\\
&=\int_0^{2\pi}\int_0^{2\pi}f(e^{i\theta})f(e^{i\theta'})e^{\frac{\beta^2}{4}\sum_{j=1}^L\frac{1}{j}\cos(j(\theta-\theta'))}\frac{d\theta}{2\pi}\frac{d\theta'}{2\pi}
\end{align}

Combining parts 1), 2), and 4) of Proposition \ref{prop:fh}, we have (as $N/M\to 0$)

\begin{align}
\notag \lim_{N\to\infty}I_2&=\int_0^{2\pi}\int_0^{2\pi}f(e^{i\theta})f(e^{i\theta'}) e^{\frac{\beta^2}{2}\sum_{j=1}^L \frac{1}{j}\cos(j(\theta-\theta'))}\frac{d\theta}{2\pi}\frac{d\theta'}{2\pi}.
\end{align}

For $I_3$, we repeat an argument from \cite{ck}. We split the domain of integration into $|\theta-\theta'|>2t_0$ and $|\theta-\theta'|<2t_0$. For the first case, we use 2) and 5) of Proposition \ref{prop:fh} and find

\begin{align}
\notag \lim_{N\to\infty}&\int_{|\theta-\theta'|>2t_0}f(e^{i\theta})f(e^{i\theta'})\frac{\E\left(F_{N,M}^\beta(e^{i\theta}) F_{N,M}^\beta(e^{i\theta'})\right)}{\E(F_{N,M}^\beta(e^{i\theta}))\E(F_{N,M}^\beta(e^{i\theta'}))}\frac{d\theta}{2\pi}\frac{d\theta'}{2\pi}\\
&=\int_{|\theta-\theta'|>2t_0}f(e^{i\theta})f(e^{i\theta'})|e^{i\theta}-e^{i\theta'}|^{-\frac{\beta^2}{2}}\frac{d\theta}{2\pi}\frac{d\theta'}{2\pi}.
\end{align}

For the latter case, we note that if $N|\theta-\theta|<1$, we note that by the asymptotics of $\sigma$ and the fact that $\log (x^{-1}\sin(x/2))$ is bounded for small $x$,  

\begin{equation}
\E(F_{N,M}^\beta(e^{i\theta})F_{N,M}^\beta(e^{i\theta'}))=\mathcal{O}(1)N^{\beta^2},
\end{equation}

\noindent where the $\mathcal{O}(1)$ is uniform in $N|\theta-\theta'|<1$, so we see that 

\begin{align}
\notag\int_{|\theta-\theta'|<N^{-1}}&f(e^{i\theta})f(e^{i\theta'})\frac{\E\left(F_{N,M}^\beta(e^{i\theta}) F_{N,M}^\beta(e^{i\theta})\right)}{\E(F_{N,M}^\beta(e^{i\theta}))\E(F_{N,M}^\beta(e^{i\theta'}))}\frac{d\theta}{2\pi}\frac{d\theta'}{2\pi}\\
&=\mathcal{O}(N^{-1})\mathcal{O}(1)\frac{N^{\beta^2}}{N^{\frac{\beta^2}{2}}}\\
\notag &=\mathcal{O}(N^{\frac{\beta^2}{2}-1})
\end{align}

\noindent which tends to zero as $N\to\infty$ under our assumptions.

\vspace{0.3cm}

On the other hand, for the $N^{-1}<|\theta-\theta'|<2t_0$ case we write 

\begin{align}
\notag\frac{\E\left(F_{N,M}^\beta(e^{i\theta})F_{N,M}^\beta(e^{i\theta'})\right)}{\E(F_{N,M}^\beta(e^{i\theta}))\E(F_{N,M}^\beta(e^{i\theta'}))}&=\mathcal{O}(1)e^{\int_0^{-i}\frac{1}{s}(\sigma(s)-\frac{\beta^2}{2})ds} N^{\frac{\beta^2}{2}}|e^{i\theta}-e^{i\theta'}|^{-\frac{\beta^2}{2}}\\
&\times e^{\int_{-i}^{-iN|\theta-\theta'|}\frac{1}{s}\left(\sigma(s)-\frac{\beta^2}{2}\right)ds}e^{\frac{\beta^2}{2}\log |\theta-\theta'|} ,
\end{align}

\noindent where the $\mathcal{O}(1)$ can be taken to depend only on $\beta$ (so there are precise asymptotics for it, but we care only about it being uniformly bounded for fixed $\beta$). Here we made use of the fact that $|2\sin (x/2)|=|e^{ix}-1|$. Again making use of the $s\to 0$ asymptotics of $\sigma$ the first integral is finite (and its value only depends on $\beta$), while the second integral we write as 

\begin{equation}
\int_{-i}^{-iN |\theta-\theta'|}\frac{\sigma(s)}{s}ds-\frac{\beta^2}{2}\log N-\frac{\beta^2}{2}\log |\theta-\theta'|.
\end{equation}

Making use of the $s\to \infty$ asymptotics of $\sigma$, we see that the above integral is uniformly bounded and we conclude that 

\begin{align}
\lim_{N\to\infty}&\int_{|\theta-\theta'|<2t_0}f(e^{i\theta})f(e^{i\theta'})\frac{\E\left(F_{N,M}^\beta(e^{i\theta})F_{N,M}^\beta(e^{i\theta'})\right)}{\E(F_{N,M}^\beta(e^{i\theta}))\E(F_{N,M}^\beta(e^{i\theta'}))}\frac{d\theta}{2\pi}\frac{d\theta'}{2\pi}\\
\notag &\leq C\int_{|\theta-\theta'|<2t_0}f(e^{i\theta})f(e^{i\theta'})  |e^{i\theta}-e^{i\theta'}|^{-\frac{\beta^2}{2}}\frac{d\theta}{2\pi}\frac{d\theta'}{2\pi}.
\end{align}

As $\lim_{N\to\infty}I_3$ must be independent of $t_0$, and this tends to zero as $t_0\to 0$ (since $\beta^2<2$), putting everything together we see that 

\begin{equation}
\lim_{N\to\infty}I_3=\int_0^{2\pi}\int_0^{2\pi}f(e^{i\theta})f(e^{i\theta'})|e^{i\theta}-e^{i\theta'}|^{-\frac{\beta^2}{2}}\frac{d\theta}{2\pi}\frac{d\theta'}{2\pi}.
\end{equation}

We conclude that for $N/M\to\infty$ as $N\to\infty$, 

\begin{align}\label{eq:var}
&\lim_{N\to\infty}\notag \E\left(\left(\int_0^{2\pi}f(e^{i\theta})(\mu_{N,M,L}^\beta(d\theta)-\mu_{N,M}^\beta(d\theta))\right)^2\right)\\
 &=\int_0^{2\pi}\int_0^{2\pi}f(e^{i\theta})f(e^{i\theta'})\left(|e^{i\theta}-e^{i\theta'}|^{-\frac{\beta^2}{2}}-e^{\frac{\beta^2}{2}\sum_{j=1}^L\frac{1}{j}\cos(j(\theta-\theta'))}\right)\frac{d\theta}{2\pi}\frac{d\theta'}{2\pi}.
\end{align}

As

\begin{equation}
\lim_{L\to\infty}e^{\frac{\beta^2}{2}\sum_{j=1}^L\frac{1}{j}\cos(j(\theta-\theta'))}=|e^{i\theta}-e^{i\theta'}|^{-\frac{\beta^2}{2}},
\end{equation}

\noindent Fatou's Lemma implies that 

\begin{align}
\notag\int_0^{2\pi}\int_0^{2\pi}&f(e^{i\theta})f(e^{i\theta'})|e^{i\theta}-e^{i\theta'}|^{-\frac{\beta^2}{2}}\frac{d\theta}{2\pi}\frac{d\theta'}{2\pi}\\
&\leq \limsup_{L\to\infty} \int_0^{2\pi}\int_0^{2\pi}f(e^{i\theta})f(e^{i\theta'})e^{\frac{\beta^2}{2}\sum_{j=1}^L\frac{1}{j}\cos(j(\theta-\theta'))}\frac{d\theta}{2\pi}\frac{d\theta'}{2\pi},
\end{align}

\noindent but \eqref{eq:var} implies the opposite relation (the left side of \eqref{eq:var} is non-negative - it's a variance) so we see that 

\begin{equation}
\lim_{L\to\infty}\lim_{N\to\infty}\E\left(\left(\int_0^{2\pi}f(e^{i\theta})(\mu_{N,M,L}^\beta(d\theta)-\mu_{N,M}^\beta(d\theta))\right)^2\right)=0.
\end{equation}

\end{proof}

\subsection{Proof of Theorem \ref{th:main}}

We now prove our main result. 

\begin{proof}[Proof of Theorem \ref{th:main}]
We wish to prove that for any continuous $f:\T\to [0,\infty)$, as $N\to\infty$ (and $N/M\to 0$)

\begin{equation}
\int_{0}^{2\pi}f(e^{i\theta})\mu_{N,M}^\beta(d\theta)\stackrel{d}{\to}\int_0^{2\pi} f(e^{i\theta})\mu^\beta(d\theta).
\end{equation}

We write 

\begin{align}
\notag \int_{0}^{2\pi}f(e^{i\theta})\mu_{N,M}^\beta(d\theta)&=\int_0^{2\pi}f(e^{i\theta}) \mu_{N,M,L}^\beta(d\theta)\\
&+\int_0^{2\pi}f(e^{i\theta})(\mu_{N,M}^\beta(d\theta)-\mu_{N,M,L}^\beta(d\theta)).
\end{align}

By Proposition \ref{prop:approxconv} and the definition of $\mu^\beta$, 

\begin{equation}
\int_0^{2\pi}f(e^{i\theta}) \mu_{N,M,L}^\beta(d\theta)\stackrel{d}{\to} \int_0^{2\pi}f(e^{i\theta})\mu^\beta(d\theta)
\end{equation}

\noindent if we first let $N\to\infty$ and then $L\to\infty$.

\vspace{0.3cm}

On the other hand, as we saw in Proposition \ref{prop:var} that

\begin{equation}
\lim_{L\to\infty}\lim_{N\to\infty}\E\left(\left(\int_0^{2\pi}f(e^{i\theta})(\mu_{N,M}^\beta(d\theta)-\mu_{N,M,L}^\beta(d\theta))\right)^2\right)=0,
\end{equation}

\noindent we have in particular that if we first let $N\to\infty$ and then $L\to\infty$,

\begin{equation}
\int_0^{2\pi}f(e^{i\theta})(\mu_{N,M}^\beta(d\theta)-\mu_{N,M,L}^\beta(d\theta))\stackrel{d}{\to}0.
\end{equation}

We conclude, by Slutsky's theorem, that if $N/M\to 0$ as $N\to\infty$, then 

\begin{equation}
\int_{0}^{2\pi}f(e^{i\theta})\mu_{N,M}^\beta(d\theta)\stackrel{d}{\to}\int_0^{2\pi} f(e^{i\theta})\mu^\beta(d\theta)
\end{equation}

\noindent as $N\to\infty$.

\end{proof}

\section{Discussion}

We now discuss less precisely what might happen to the Fredholm determinant $\det(I+K)$ in the case where $N/M\to q\in(0,1)$ as $N\to\infty$. For simplicity, we focus on the case where the distance between the singularities is bounded away from zero. From our analysis, it looks reasonable that $\mathrm{tr}(K^j)=\mathcal{O}((N/M)^j)$ so if we assume that $q$ is small, to argue that the asymptotics of $\det(I+K)$ differ from one, it should be enough to argue that $\mathrm{tr}(K)\nrightarrow 0$.

\vspace{0.3cm}

Looking at our argument, we see that the most relevant contribution to the trace comes from values of the kernel between $[M^{-1},N^{-1}]$ (close to the point $M^{-1}$ - elsewhere there seems to be a term of the form $e^{-1/q}$). Working a bit harder on the asymptotics of the orthogonal polynomials, one could check whether or not there is some cancellation in this regime (e.g. is there a symmetry causing the integrals along $(1-N^{-1},1-M^{-1})\times w_j$ and $(1+M^{-1},1+N^{-1})\times w_j$ to cancel). As the analysis of the asymptotics was a bit heavy as it is, we choose not to pursue this question further here, but note that it seems unlikely that there would be such a cancellation to all orders in $q$, which implies that it is quite reasonable to expect that the Fisher-Hartwig formula is unlikely to hold as it is for $q>0$. It would be an interesting problem to try to calculate more precise asymptotics of the Fredholm determinant in the $N/M\to q>0$ case. Another interesting questions is how this affects the law of the limiting chaos measure or perhaps the law of the maximum of the characteristic polynomial (for more on this type of questions in the context of characteristic polynomials of log-gases, see \cite{fk,fs})

\vspace{0.3cm}

We wish to underline the interesting contrast between this and the behavior of the linear statistics. If one just has $M-N\to\infty$ as $N\to\infty$, the law of any finite collection of the linear statistics agrees with the continuum case, so the linear statistics are essentially unable to see the discreteness of the model, while the characteristic polynomial does seem to see it. This is presumably due to the fact that the characteristic polynomial essentially contains information of all polynomial linear statistics, so it might be reasonable to expect that the polynomial linear statistics also see the discreteness when one takes the power to grow with $N$ and $M$.

\end{document}